	\setlist[enumerate,1]{label=(\roman*), font=\normalfont}
\theoremstyle{plain}
\newtheorem{theorem}{Theorem}[section]
\newtheorem{lemma}[theorem]{Lemma}
\newtheorem{proposition}[theorem]{Proposition}
\newtheorem{corollary}[theorem]{Corollary}
\theoremstyle{definition}
\newtheorem{remark}[theorem]{Remark}
\newtheorem{example}[theorem]{Example}
\newcommand{\goto}{\rightarrow}
\def\quotient#1#2{%
    \raise1ex\hbox{$#1$}\Big/\lower1ex\hbox{$#2$}%
}
\DeclareMathOperator{\rad}{rad}
\newcommand{\uv}[1]{``{#1}"}
\renewcommand{\H}{\mathbb{H}}% the hyperbolic plane
\newcommand{\mE}{\mathcal{E}}
\newcommand{\bX}{\boldsymbol{X}}
\newcommand{\bY}{\boldsymbol{Y}}
\newcommand{\boldzeta}{\boldsymbol{\zeta}}
\newcommand{\boldxi}{\boldsymbol{\xi}}
\newcommand{\boldalpha}{\boldsymbol{\alpha}}
\renewcommand{\b}{\mathfrak{b}}
\newcommand{\simsim}{\stackrel{{\scriptsize{sim}}}{\sim}}
\newcommand{\simstb}{\stackrel{{\scriptsize{stb}}}{\sim}}
\newcommand{\iw}[1]{\mathfrak{i}_{\mathrm{W}}(#1)}
\newcommand{\iql}[1]{\mathfrak{i}_{\mathrm{ql}}(#1)}
\newcommand{\iti}[1]{\mathfrak{i}_{\mathrm{t}}(#1)}
\newcommand{\an}{\mathrm{an}}
\newcommand{\nd}{\mathrm{nd}}
\newcommand{\lc}[1]{\mathrm{lc}(#1)}
\newcommand{\Tg}[2]{\langle D_{#1}^*(#2)\rangle}
\newcommand{\Ng}[2]{\langle D_{#1}^*(#2)^2\rangle}
\newcommand{\ort}{\:\bot\:}
\newcommand{\ql}[1]{\mathrm{ql}(#1)}
\newcommand{\sqf}[1]{\langle #1 \rangle} % singular form
\newcommand{\gensqf}[2]{\langle {#1}_1, \dots, {#1}_{#2} \rangle} % singular form
\newcommand{\nsqf}[1]{\left[{#1} \right]} % 2-dim nonsingular form
\newcommand{\gennsqf}[3]{\left[{#1}_1,{#2}_1\right]\ort\dots\ort\left[{#1}_{#3},{#2}_{#3}\right]} % general nonsingular form of dim 2*{#3}
\newcommand{\pf}[1]{\langle\!\langle #1 \rangle\!\rangle} % quasi Pfister form
\newcommand{\dbrac}[1]{(\!(#1)\!)}
\begin{document}
\title[Quadratic forms over function fields in char. 2]{Isotropy of quadratic forms over function fields in characteristic $2$}
\author{Krist\'yna Zemkov\'a}
\address{Fakult\"at f\"ur Mathematik, Technische Universit\"at Dortmund, D-44221 Dortmund,
Germany}
\address{Department of Mathematics and Statistics, University of Victoria, Victoria BC V8W 2Y2, Canada}
\email{zemk.kr@gmail.com}%
\date{\today}

\begin{abstract}
We extend to characteristic two recent results about isotropy of quadratic forms over function fields. In particular, we provide a characterization of function fields not only of quadratic forms but also more generally of polynomials in several variables, over which a quadratic form becomes isotropic. As an application of these results, we obtain criteria for stable birational equivalence of quadratic forms.
\end{abstract}

\thanks{This work was supported by DFG project HO 4784/2-1. The author further acknowledges support from the Pacific Institute for the Mathematical Sciences and a partial support from the National Science and Engineering Research Council of Canada. The research and findings may not reflect those of these institutions.\\ \indent The author reports there are no competing interests to declare.}
\maketitle

%===================================================================
%===============================================================================================

%===============================================================================================
\section{Introduction}

An important question in the algebraic theory of quadratic forms inquires under which conditions an anisotropic quadratic form becomes isotropic after extending scalars to a field extension. In the spotlight are function fields of other quadratic forms, or, more generally, of polynomials. 

In the first case, where the function field is determined by an anisotropic quadratic form, there is an important result called the separation theorem. It was originally proved by Hoffmann \cite{Hof95}, and extended to characteristic two by Hoffmann and Laghribi \cite{HL06}. The theorem says that, for anisotropic quadratic forms $\varphi$ and $\psi$ defined over $F$, the form $\varphi$ stays anisotropic over $F(\psi)$ if  $\dim\varphi\leq 2^n <\dim\psi$ for some $n\geq1$.

Little is known in case of function fields of polynomials but a well-known result in this direction is the so-called norm theorem.
 It was first proved by Knebusch \cite{Kne73}, the full characteristic two version has been proved only recently by Laghribi and Mukhija \cite{LagMuk21}. It characterizes the polynomials $f\in F[X_1,\dots, X_n]$, for which a quadratic form $\varphi$ becomes quasi-hyperbolic over the function field $F(f)$, as the ones for which $f\in G_{F(X_1,\dots,X_n)}(\varphi)$ holds. Note that the result concerns itself with quasi-hyperbolicity, which is a much stronger condition than isotropy.

Regarding isotropy of quadratic forms over the function field of a polynomial, there is another known result which is, in a way, similar to the norm theorem.

\begin{theorem}[{\cite[Th.~18.3]{EKM}}] \label{Th:ExampleOfKnown}
Let $\varphi$ be a quadratic form over $F$, ${f\in F[X]}$ a nonzero polynomial in one variable. Then the following conditions are equivalent:
\begin{enumerate}
	\item $af\in \Tg{F(X)}{\varphi}$ for some $a\in F^*$,
	\item $\varphi_{F(g)}$ is isotropic for each irreducible divisor $g$ occurring to an odd power in the factorization of $f$.
\end{enumerate}
\end{theorem}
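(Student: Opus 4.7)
My plan is to analyze both directions using the $g$-adic valuations $v_g$ on $F(X)$ attached to the monic irreducibles $g \in F[X]$, with residue field $F(g)$. The core technical input is a local lemma: $\varphi_{F(g)}$ is anisotropic if and only if $v_g(c) \in 2\Z$ for every $c \in D^*_{F(X)}(\varphi)$. The forward half is elementary --- given $\varphi(w_1, \dots, w_n) = c$ with $w_i \in F(X)$, after rescaling so that $\min_i v_g(w_i) = 0$, reducing modulo $g$ would force a nontrivial zero of $\varphi_{F(g)}$ if $v_g(c)$ were odd. The converse half is deeper: from an isotropic vector of $\varphi_{F(g)}$ one lifts, via a Hensel/Springer-type argument, to isotropy of $\varphi$ over the completion $\widehat{F(X)}_g$, deduces that $\varphi$ is universal over that completion, and approximates a representation of a uniformizer by an $F(X)$-point to produce $c \in D^*_{F(X)}(\varphi)$ with $v_g(c) = 1$.

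The implication (i) $\Rightarrow$ (ii) then follows by contraposition: if some $g$ of odd multiplicity in $f$ had $\varphi_{F(g)}$ anisotropic, the forward half of the lemma would force the subgroup $\Tg{F(X)}{\varphi}$ to consist only of elements of even $v_g$-valuation, contradicting the fact that $v_g(af) = v_g(f)$ is odd. For the reverse implication I would work inside the square-class group
\[
F(X)^*/F(X)^{*2} \;\cong\; F^*/F^{*2} \,\oplus\, \bigoplus_{g \text{ monic irr.}} \F_2,
\]
using the inclusion $F(X)^{*2} \subseteq \Tg{F(X)}{\varphi}$ (a consequence of $\varphi(sw) = s^2\varphi(w)$). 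The image of $af$ is supported on $I = \{i : e_i \text{ odd}\}$ in the $\bigoplus_g$-summand, and each such coordinate corresponds, by (ii), to a $g$ with $\varphi_{F(g)}$ isotropic. The converse half of the local lemma supplies, for each such $g$, an element $c_g \in D^*_{F(X)}(\varphi)$ with $v_g(c_g) = 1$ and, by the forward half applied to $c_g$, with $v_{g'}(c_g)$ even at every $g'$ where $\varphi_{F(g')}$ is anisotropic. A Gaussian elimination argument over $\F_2$ using the $c_g$'s (whose mod-$2$ valuation vectors carry pivot $1$ at distinct isotropic places) then realizes the image of $af$ in the $\bigoplus_g$-summand; combining with the $F^*/F^{*2}$-contribution coming from $a$ yields $af \in \Tg{F(X)}{\varphi}$.

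The main obstacle is the converse half of the local lemma: producing a value of odd $g$-adic valuation out of isotropy of $\varphi_{F(g)}$. In characteristic different from two this is a standard Hensel lifting at the smooth point given by the isotropic vector. In characteristic two, however, the identity $(u + gv)^2 = u^2 + g^2 v^2$ implies that for a totally quasilinear form $\varphi = \gensqf{a}{n}$ with zero associated bilinear form, every value $\sum a_i s_i^2 \in D^*_{F(X)}(\varphi)$ must have $v_g \in 2\Z$, so the lemma as stated fails on the quasilinear part. Bridging this gap --- either by splitting $\varphi = \varphi_{\mathrm{r}} \perp \varphi_{\mathrm{ql}}$ into its regular and quasilinear summands and treating each separately, or by suitably modifying the statement of the theorem --- is precisely the phenomenon that the paper sets out to address.
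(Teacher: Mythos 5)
Your forward direction ((i) $\Rightarrow$ (ii), by contraposition via the even-valuation lemma) is sound and is exactly Lemma~\ref{Lemma:CDVfieldsEvenValuation} applied to the $g$-adic valuations. The converse direction, however, has a genuine gap, and the option you raise at the end --- ``suitably modifying the statement of the theorem'' --- is the right instinct, not a dodge. Your quasilinear obstruction is not an artifact of the lifting technique. If $\varphi \cong \sqf{1,d}$ with $d \notin F^2$, then $D^*_{F(X)}(\varphi)$ is already a group (a norm group in characteristic two), so it equals $\Tg{F(X)}{\varphi}$; by the computation you yourself note, every element of it has even $v_g$ for all irreducible $g$; and a specialization argument shows $\Tg{F(X)}{\varphi} \cap F^* = D^*_F(\varphi)$. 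Taking $f = X^2 + d$ (so that (ii) holds) and any $a \in F^* \setminus D^*_F(\varphi)$ gives $af \notin \Tg{F(X)}{\varphi}$. So one really must impose the constraint $a \in \Tg{F}{\varphi}$, and this is precisely what the paper's own generalization, Theorem~\ref{Th:EBF}, builds into its condition (iii). The analogous computation with the nonsingular form $\nsqf{1,d}$, $d \notin \wp(F)$, and $f = X^2 + X + d$ shows this is not a quasilinear peculiarity.

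Beyond this, the place-by-place plan is also underpowered where the converse of the local lemma does hold: for the Gaussian elimination to terminate you need each $c_g$ to have $v_g(c_g)$ odd while $v_{g'}(c_g)$ is even at every other ramified place simultaneously, which a single Hensel lift at $g$ does not control, and the elimination still leaves a residual element of $F^*$ which must land in $\Tg{F}{\varphi}$ --- again the missing constraint. The route that actually works, and which drives the paper's proof of Theorem~\ref{Th:EBF} through Lemma~\ref{Lemma:DivAndReprPolyOneVar} and Proposition~\ref{Prop:RousAn_OneVar}, is a Cassels-style descent on $\deg f$: lift a nontrivial zero of $\varphi_{F(f)}$ to a tuple in $F[X]^n$ not all divisible by $f$, divide coordinate-wise by $f$ with remainder, observe that $\varphi$ applied to the remainder vector equals $cfh$ with $c \in F^*$ and $\deg h \le \deg f - 2$, and induct on the monic irreducible factors of $h$. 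This manufactures $f \in D^*_{F(X)}(\varphi)^m$ directly, with no local control needed, works verbatim for quasilinear forms, and produces $c \in D^*_F(\varphi)$ as a leading coefficient via Lemma~\ref{Lemma:RepreOfLeadingCoeff} rather than as a loose end. That descent is the ingredient your outline is missing.
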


\noindent Here, $\Tg{F(X)}{\varphi}$ denotes the multiplicative subgroup of $F(X)^*$ generated by the nonzero elements represented by $\varphi$ over the field $F(X)$, and the theorem holds regardless of the characteristic of the field $F$. 

A generalization of Theorem~\ref{Th:ExampleOfKnown} to several variables and some other connected results have been proved recently by Roussey~\cite{Rous}, but only for fields of characteristic other than two. 

\bigskip

In this article, we extend the work of Roussey to the characteristic two case. After introducing all the necessary definitions and known results in Section~\ref{Sec:Prel}, we generalize in Section~\ref{Sec:Poly} the above-mentioned Theorem~\ref{Th:ExampleOfKnown} to several variables; see Theorem~\ref{Th:EBF}.  In Section~\ref{Sec:Quadr}, we focus on function fields of quadratic forms and give a characterization of isotropy over such fields analogous to the one in Theorem~\ref{Th:ExampleOfKnown}. Here, the more specific assumptions allow us to provide a number of equivalent conditions, some of them seemingly stronger; see Theorem~\ref{Th:CharacterizationIsotropyOverFunField} and Corollary~\ref{Cor:CharacterizationIsotropyOverFunField_Simplified}. Some interesting results arise if we consider the tensor product of a quadratic form with a bilinear Pfister form; this is the topic of the final part of this note, Section~\ref{Sec:Pfister}, and the main result of this section is Theorem~\ref{Th:IsotropyPfisterMultiples}. Moreover, the characterizations of isotropy over a function field of a quadratic form naturally lead to characterizations of stable birational equivalence; see Corollaries~\ref{Cor:CharSTB} and \ref{Cor:CharSTB-PF}.

%===================================================================
\section{Preliminaries} \label{Sec:Prel}

Throughout this paper, all fields are of characteristic two.

%--------------------------------------------------------
\subsection{Basic notions}

Let $V$ be a finite dimensional vector space over $F$. We define a \emph{quadratic form} on $V$ as a map $\varphi: V\goto F$ such that
\begin{enumerate}[(1)]
	\item $\varphi(av)=a^2\varphi(v)$ for any $a\in F$ and $v\in V$,
	\item $\b_{\varphi}: V\times V\goto F$, defined by $\b_{\varphi}(v,w)=\varphi(v+w)+\varphi(v)+\varphi(w)$, is a bilinear form.
\end{enumerate}
We define the dimension of $\varphi$ as $\dim V$, denoted $\dim\varphi$. In the following, we will sometimes use \emph{(quadratic) form over $F$} as an abbreviation for quadratic form on a vector space $V$ over $F$.

Any quadratic form $\varphi$ can be written as
\begin{equation}\label{Eq:GenSQF}
\varphi\cong\gennsqf{a}{b}{r}\ort\gensqf{c}{s}
\end{equation}
with $a_i,b_i,c_j\in F$, where $[a,b]$ corresponds to the quadratic form given by the polynomial $aX^2+XY+bY^2$ and $\sqf{c}$ to the one given by $cZ^2$. We call the tuple $(r,s)$ the \emph{type} of the form $\varphi$; it is invariant under isometry. We say that the form $\varphi$ is 
\begin{itemize}
	\item the \emph{zero form} if $r=s=0$,
	\item \emph{nonsingular} if $r>0$ and $s=0$,
	\item \emph{semisingular} if $r>0$ and $s>0$,
	\item \emph{quasilinear} (sometimes also called \emph{totally singular}) if $r=0$ and $s>0$,
	\item and \emph{singular} if $s>0$ (this term covers both semisingular and quasilinear forms).
\end{itemize}
In \eqref{Eq:GenSQF}, the part $\gensqf{c}{s}$ is determined uniquely (up to isometry) by $\rad\b_{\varphi}$; it is called the \emph{quasilinear part} and denoted by $\ql{\varphi}$. 

\bigskip

We say that a quadratic form $\varphi$ is \emph{isotropic} if there exists a nonzero vector $v\in V$ such that $\varphi(v)=0$. Otherwise, the quadratic form is called \emph{anisotropic}. For example, the quadratic form $[0,0]$ is isotropic; it is called \emph{hyperbolic plane} and denoted by $\H$. A second important example of an isotropic quadratic form is $\sqf{0}$. By Witt's decomposition theorem, any quadratic form $\varphi$ can be written as
\begin{equation}\label{Eq:WittDecomp}
\varphi\cong i\times\H\ort\varphi_r\ort\varphi_s\ort j\times\sqf{0}
\end{equation}
with $\varphi_r$ nonsingular, $\varphi_s$ quasilinear and $\varphi_r\ort\varphi_s$ anisotropic. In this decomposition, the form $\varphi_r\ort\varphi_s$ is unique up to isometry. The form $\varphi_s$ is also unique up to isometry, and denoted by $\ql{\varphi}$. The form $\varphi_r$ is generally not unique. Moreover, the numbers $i$ and $j$ are determined uniquely; we define
\begin{itemize}
	\item the \emph{Witt index} of $\varphi$ as $\iw{\varphi}=i$,
	\item the \emph{quasilinear index} (or \emph{defect}) of $\varphi$ as $\iql{\varphi}=j$,
	\item and the \emph{total isotropy index} of $\varphi$ as $\iti{\varphi}=i+j$.
\end{itemize}  
Note that $\iti{\varphi}$ is the dimension of any maximal totally isotropy subspace of $V$. We call the form $\varphi$ \emph{nondefective} if $\iql{\varphi}=0$. Note that for quasilinear forms, being nondefective is equivalent to being anisotropic, whereas all nonsingular forms are nondefective. We define the \emph{nondefective part} of $\varphi$ as $\varphi_{\nd}\cong i\times\H\ort\varphi_r\ort\varphi_s$.

\bigskip

Let $\sigma$ be another quadratic form over $F$. We say that $\sigma$ is \emph{dominated by} $\varphi$, denoted by $\sigma\prec\varphi$, if $\sigma\cong\varphi|_U$ for some vector space $U\subseteq V$. Moreover, we say that $\sigma$ is a \emph{subform} of $\varphi$, denoted by $\sigma\subseteq\varphi$, if there exists a quadratic form $\psi$ such that $\varphi\cong\sigma\ort\psi$. Note that any subform of $\varphi$ is dominated by $\varphi$. If $\sigma$ is nonsigular or both $\varphi$ and $\sigma$ are quasilinear, then the notions of subform and dominance are equivalent, but not so in general: For example, $\sqf{0}\prec\H$, but $\sqf{0}\ort\sqf{a}\not\cong\H$ for any $a\in F$, because isometry preserves the type.

%--------------------------------------------------------
%\subsection{Field extensions}

\bigskip

If $E/F$ is a field extension, then we write $\varphi_E$ for the quadratic form on the vector space $V_E=E\otimes_F V$ given by $\varphi_E(a\otimes v)=a^2\varphi(v)$ and with the polar form $\b_{\varphi_E}$ defined by $\b_{\varphi_E}(a\otimes v, b\otimes w)=ab\b_{\varphi}(v,w)$ for any $a,b\in E$ and $v,w\in V$.

\bigskip

Let $f\in F[X_1,\dots, X_n]$ be an irreducible polynomial; we define the \emph{function field} $F(f)$ as the quotient field of the domain $F[X_1,\dots,X_n]/(f)$. 

For a quadratic form $\varphi$ of dimension $n$, the polynomial $\varphi(X_1,\dots,X_n)$  is reducible if and only if the nondefective part $\varphi_{\nd}$ of $\varphi$ is either of the type $(0,1)$, or of the type $(1,0)$ and $\varphi_{\nd}\cong\H$, see~\cite{Ahmad1997}. We say that the quadratic form $\varphi$ is irreducible if the polynomial $\varphi(X_1,\dots,X_n)$ is \emph{irreducible}; in this case, we can define the function field $F(\varphi)$ as above. If $\varphi\cong\sqf{a}$ for some $a\in F$ or $\varphi\cong\H$, then we set $F(\varphi)=F$; here we note that we will prove in Lemma~\ref{Lemma:DefectTransc} that $F(\varphi)\simeq F(\varphi_{\nd})(X_1,\dots,X_j)$ with $j=\iql{\varphi}$, so for example $F(\sqf{a,0,0})\simeq F(X_1,X_2)$.

%--------------------------------------------------------
\subsection{Groups generated by the represented elements} \label{Subsec:GroupsDefs}
For a quadratic form $\varphi$ over $F$, we denote by $D_F(\varphi)$ the set of elements of $F$ represented by $\varphi$ (including zero) and set $D_F^*(\varphi)=D_F(\varphi)\setminus\{0\}$. If $E/F$ is a field extension, then we write $D_E(\varphi)$ (resp. $D_E^*(\varphi)$) instead of $D_E(\varphi_E)$ (resp. $D_E^*(\varphi_E)$). Note that $D_F(\varphi)=D_F(\varphi_{\nd})$ and $D_F^*(\varphi)=D_F^*(\varphi_{\nd})$. 

For $k\geq1$ and quadratic forms $\varphi_1,\dots,\varphi_k$ over $F$, we define
\[D_F(\varphi_1)\cdots D_F(\varphi_k)=\left\{\prod_{i=1}^ka_i~\bigg|~a_i\in D_F(\varphi_i), 1\leq i\leq k\right\}.\]
If $\varphi_1=\dots=\varphi_k=\varphi$, then we write $D_F(\varphi)^k$ for short; analogously for $D_F^*(\varphi_1)\cdots D_F^*(\varphi_k)$ and $D_F^*(\varphi)^k$.
Most importantly, by $\langle D_F^*(\varphi)^k\rangle$ we denote the multiplicative subgroup of $F^*$ generated by $D_F^*(\varphi)^k$, i.e.,
\[\langle D_F^*(\varphi)^k\rangle=\left\{\prod_{i=1}^nb_i~\bigg|~n\geq0, b_1,\dots, b_n\in D_F^*(\varphi)^k\right\}.\]

As a preparation for later proofs, we compare some of the sets and multiplicative groups defined above. 

\begin{lemma} \label{Lemma:MultiplicativeGroups}
Let $\varphi$ be a quadratic form. 
\begin{enumerate}
	\item $\Ng{F}{\varphi}=\Ng{F}{c\varphi}$ for any $c\in F^*$,
	\item $D_F^*(\varphi)^2\subseteq \Ng{F}{\varphi}\subseteq\Tg{F}{\varphi}\subseteq F^*$; moreover,\\ ${F^{*2}\subseteq D_F^*(\varphi)^2}$ if $\varphi\not\cong\sqf{0,\dots,0}$.
	\item $\Ng{F}{\varphi}=\Tg{F}{c\varphi}$ for any $c\in D_F^*(\varphi)$.
\end{enumerate}
\end{lemma}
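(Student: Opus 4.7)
The plan is to exploit the homogeneity $\varphi(xv)=x^2\varphi(v)$, which makes $D_F^*(\varphi)$ a union of cosets of $F^{*2}$ in $F^*$; from this single observation combined with routine manipulations in the abelian group $F^*$, all three parts fall out. I would prove the parts in the order (ii), (i), (iii), since (ii) provides a useful auxiliary inclusion used in the other two.

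For (ii) the only inclusion that is not a tautology is $F^{*2}\subseteq D_F^*(\varphi)^2$: fix any $a\in D_F^*(\varphi)$ and note that $ax^2\in D_F^*(\varphi)$ for every $x\in F^*$, so $a\cdot(ax^2)=(ax)^2\in D_F^*(\varphi)^2$, and the map $x\mapsto ax$ is a bijection of $F^*$. The other inclusions $D_F^*(\varphi)^2\subseteq \Ng{F}{\varphi}\subseteq \Tg{F}{\varphi}\subseteq F^*$ are immediate from the definitions, the middle one because each generator $ab$ of $\Ng{F}{\varphi}$ is already a product of two generators of $\Tg{F}{\varphi}$.

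For (i), I would use the identity $D_F^*(c\varphi)=c\cdot D_F^*(\varphi)$, which gives $D_F^*(c\varphi)^2=c^2\cdot D_F^*(\varphi)^2$. Since $c^2\in F^{*2}\subseteq \Ng{F}{\varphi}$ by (ii), multiplication by $c^2$ maps the generating set $D_F^*(\varphi)^2$ into the group $\Ng{F}{\varphi}$, yielding $\Ng{F}{c\varphi}\subseteq \Ng{F}{\varphi}$. The reverse inclusion follows by applying the same reasoning to $c^{-1}$ and the form $c\varphi$.

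The substantive part, where I expect the main (though mild) obstacle, is (iii). The inclusion $\Tg{F}{c\varphi}\subseteq \Ng{F}{\varphi}$ is easy: for $c\in D_F^*(\varphi)$, every generator $ca$ of $\Tg{F}{c\varphi}$ lies in $D_F^*(\varphi)\cdot D_F^*(\varphi)=D_F^*(\varphi)^2\subseteq \Ng{F}{\varphi}$. For the reverse, I would take a generator $ab\in D_F^*(\varphi)^2$ of $\Ng{F}{\varphi}$ and write
\[ab=(ca)(cb)\cdot c^{-2},\]
where $(ca)(cb)\in \Tg{F}{c\varphi}$ is obvious; the subtle point is showing that $c^{-2}=(c^{-1})^2$ already belongs to $\Tg{F}{c\varphi}$. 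This reduces to checking $F^{*2}\subseteq \Tg{F}{c\varphi}$, which follows by the same trick as in the auxiliary step: for any $y\in F^*$ and any fixed $ca\in D_F^*(c\varphi)$, one has $c(ay^2)\in D_F^*(c\varphi)$, so
\[\bigl(c(ay^2)\bigr)\cdot(ca)^{-1}=y^2\in \Tg{F}{c\varphi}.\]
Feeding this back gives $ab\in \Tg{F}{c\varphi}$, closing the circle; no further difficulties are expected.
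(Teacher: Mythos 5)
Your proposal is correct and follows essentially the same elementary route as the paper's proof: all three parts come down to exploiting $\varphi(xv)=x^2\varphi(v)$ and routine manipulations of generators in the abelian group $F^*$. The only cosmetic differences are that the paper proves (i) directly by matching generating sets (using $D_F(\varphi)=D_F(c^{-2}\varphi)$ to rewrite each generator $ca\cdot cb'$ of $\Ng{F}{c\varphi}$ as $ab$) rather than going through (ii) as you do, and the paper's (iii) is a one-line chain $\Ng{F}{\varphi}=\Ng{F}{c\varphi}\subseteq\Tg{F}{c\varphi}=\langle cD_F^*(\varphi)\rangle\subseteq\Ng{F}{\varphi}$; also, for the step $F^{*2}\subseteq\Tg{F}{c\varphi}$ in your (iii) you could simply cite (ii) applied to $c\varphi$ instead of re-deriving it.
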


\begin{proof}
(i) Since $D_F(\varphi)=D_F(c^{-2}\varphi)$, the group $\Ng{F}{c\varphi}$ is generated by the elements $ca\cdot c(c^{-2}b)=ab$ with $a,b\in D_F^*(\varphi)$.

(ii) Let $x\in F^*$, then for any $a\in D_F^*(\varphi)$ we have $a^{-1},ax^2\in D_F^*(\varphi)$; hence, $x^2=a^{-1}\cdot ax^2 \in D_F^*(\varphi)^2$, i.e., $F^{*2}\subseteq D_F^*(\varphi)^2$.  
Let $y\in\Ng{F}{\varphi}$; then $y=\prod_{i=1}^na_ib_i$ for some $n>0$ and $a_i,b_i\in D_F^*(\varphi)$, and hence $y\in\Tg{F}{\varphi}$, i.e., $\Ng{F}{\varphi}\subseteq\Tg{F}{\varphi}$. The remaining inclusions are obvious.

(iii) We have 
\[
\Ng{F}{\varphi}
\stackrel{\text{(i)}}{=}
\Ng{F}{c\varphi} 
\stackrel{\text(ii)}{\subseteq} 
\Tg{F}{c\varphi}
=
\langle cD_{F}^*(\varphi)\rangle
\subseteq
\Ng{F}{\varphi},
\]
where the last inclusion holds, because $c\in D_F^*(\varphi)$.
\end{proof}

%--------------------------------------------------------
\subsection{Quadratic forms over discrete valuation fields}

In this subsection, we denote by $A$ a discrete valuation ring, $(t)$ its maximal ideal with a uniformizing element $t$, $K$ the quotient field of $A$, $v_t$ the valuation on $K$, and $F$ the residue field $A/(t)$. If $\varphi$ is a quadratic form defined over $A$, we denote by $\overline{\varphi}$ its image under the homomorphism $A\goto F$. Moreover, if $\varphi$ is a quadratic form defined over $F$, then we denote by $\varphi_l$ some lifting to $A$, i.e., a quadratic form over $A$ which satisfies $\overline{\varphi_l}\cong\varphi$ (it does not have to be unique). Then $(\varphi_l)_K$ is a form over $K$; by abuse of notation, we will denote this form $\varphi_l$, too.

We call a vector $\boldxi=(\xi_1,\dots,\xi_n)$ with $\xi_i\in A$ \emph{primitive} if $\xi_i\notin(t)$ for some $i$, i.e., if the image $\overline{\boldxi}$ of $\boldxi$ under $A\goto F$ is nonzero. Note that for any nonzero vector $\boldxi'$ over $A$, there exists a scalar multiple of $\boldxi'$ which is primitive: Let $\boldxi'=(\xi'_1,\dots,\xi'_n)$ and set $k=\max\{v_t(\xi'_i)~|~1\leq i\leq n\}$; then the vector $\boldxi=t^{-k}\boldxi'$ is primitive. In particular, if $\varphi$ is an isotropic form over $K$, then there exists a primitive vector $\boldxi$ over $A$ such that $\varphi(\boldxi)=0$.

\begin{lemma} \label{Lemma:CDVfieldsIsotropy}
Let $\varphi$ be a quadratic form over $F$. If $\varphi_l$ is isotropic over $K$, then $\varphi$ is isotropic over $F$.
\end{lemma}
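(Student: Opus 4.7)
The plan is a direct application of the primitivity construction discussed in the paragraph immediately preceding the lemma. Since $\varphi_l$ is isotropic over $K$, by the remark after the definition of primitivity, we can pick a primitive vector $\boldxi=(\xi_1,\dots,\xi_n)\in A^n$ with $\varphi_l(\boldxi)=0$ in $K$.

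Next I would observe that $\varphi_l$ has coefficients in $A$ and $\boldxi\in A^n$, so $\varphi_l(\boldxi)\in A$; the equality $\varphi_l(\boldxi)=0$ that holds in $K$ therefore holds already in $A$. Reducing modulo $(t)$ via the homomorphism $A\goto F$, and using that this reduction sends $\varphi_l$ to $\varphi$, we get $\varphi(\overline{\boldxi})=0$ in $F$.

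Finally, primitivity of $\boldxi$ means precisely that $\overline{\boldxi}\neq 0$ in $F^n$. Hence $\overline{\boldxi}$ is a nontrivial isotropic vector for $\varphi$ over $F$, proving the claim.

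There is essentially no obstacle here: the only thing to watch is the scaling argument producing the primitive representative, which is exactly the observation made just before the lemma statement, and the fact that reduction mod $(t)$ commutes with evaluation of $\varphi_l$ on integral vectors, which is immediate from the polynomial description of a quadratic form.
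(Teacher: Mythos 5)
Your proof is correct and takes the same route as the paper's: scale to a primitive isotropic vector over $A$ (using the remark preceding the lemma), reduce modulo $(t)$, and observe that the image vector is nonzero and isotropic for $\varphi$. The paper states these steps a bit more tersely, but the argument is identical.
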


\begin{proof}
Let $\dim\varphi=n$. If $\varphi_l$ is isotropic over $K$, then there exists a primitive vector $\boldxi=(\xi_1,\dots,\xi_n)$ with all $\xi_i\in A$ such that $\varphi_l(\boldxi)=0$. Then $\overline{\boldxi}$ is a nonzero vector over $F$ for which $\overline{\varphi_l}(\overline{\boldxi})=0$. Therefore, $\overline{\varphi_l}\cong\varphi$ is isotropic over $F$.
\end{proof}

\begin{lemma}[{\cite[Lemma~19.5]{EKM}}] \label{Lemma:CDVfieldsAnisotropy}
Let $\varphi_0$, $\varphi_1$ be quadratic forms over $A$. If the forms $\overline{\varphi_0}$, $\overline{\varphi_1}$ are anisotropic over $F$, then $\varphi_0\ort t\varphi_1$ is anisotropic over $K$.
\end{lemma}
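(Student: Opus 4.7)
The plan is to argue by contradiction, using the primitive-vector observation made immediately before Lemma~\ref{Lemma:CDVfieldsIsotropy}. Suppose $\varphi_0\ort t\varphi_1$ is isotropic over $K$. Since this form is defined over $A$, there exists a primitive vector $(\boldxi,\boldsymbol{\eta})\in A^m\times A^n$, with $m=\dim\varphi_0$ and $n=\dim\varphi_1$, such that
\[ \varphi_0(\boldxi)+t\,\varphi_1(\boldsymbol{\eta})=0. \]
I will split into two cases according to whether the $\varphi_0$-component $\boldxi$ is itself primitive.

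If $\boldxi$ is primitive, I reduce the relation modulo $t$, giving $\overline{\varphi_0}(\overline{\boldxi})=0$ in $F$. Since $\overline{\boldxi}\neq 0$, this contradicts the anisotropy of $\overline{\varphi_0}$. Otherwise, every entry of $\boldxi$ lies in $(t)$, so I can write $\boldxi=t\boldxi'$ with $\boldxi'\in A^m$. Quadratic homogeneity gives $\varphi_0(\boldxi)=t^2\varphi_0(\boldxi')$, so after substituting and dividing by $t$ the isotropy relation becomes
\[ t\,\varphi_0(\boldxi')+\varphi_1(\boldsymbol{\eta})=0. \]
Because $(\boldxi,\boldsymbol{\eta})$ is primitive and all entries of $\boldxi$ lie in $(t)$, the vector $\boldsymbol{\eta}$ must itself be primitive. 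Reducing the last relation modulo $t$ yields $\overline{\varphi_1}(\overline{\boldsymbol{\eta}})=0$ with $\overline{\boldsymbol{\eta}}\neq 0$, contradicting the anisotropy of $\overline{\varphi_1}$.

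There is no serious obstacle; this is a standard discrete-valuation argument. The only point to keep track of is the double use of primitivity: first to extract a primitive isotropic representative over $A$ (clearing $t$-adic denominators), and then to observe that whichever of the two subvectors is divisible by $t$ can be scaled away without losing primitivity of the remaining one, so that exactly one of the two reductions modulo $t$ produces a nonzero vector annihilated by the corresponding residue form.
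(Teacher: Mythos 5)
Your proof is correct, and it is precisely the standard argument; the paper does not give its own proof but cites \cite[Lemma~19.5]{EKM}, and your case split on whether $\boldxi$ is primitive (reducing mod $t$ in the first case, scaling out a factor of $t$ and then reducing in the second) is exactly how that reference proceeds. The bookkeeping on primitivity — that if every entry of $\boldxi$ lies in $(t)$ then $\boldsymbol{\eta}$ must be primitive — is the only point needing care, and you handle it correctly.
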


\begin{example} \label{Ex:PowSerAnisotropy}
For a field $F$, let $K=F(X)$ (resp. $K=F\dbrac{X}$) be the rational function field (resp. the field of formal power series) in one variable. Then $K$ is a discrete valuation field with respect to the $X$-adic valuation, the valuation ring is $\mathcal{O}_X=\Big\{\frac{f}{g}~\Bigl|~f,g\in F[X], X\nmid g\Big\}$ (resp. $F[\![X]\!]$), and the residue field is $F$.

Let $\varphi_0$, $\varphi_1$ be anisotropic quadratic forms over $F$. By Lemma~\ref{Lemma:CDVfieldsIsotropy}, $\varphi_0$ and $\varphi_1$ are anisotropic over $K$. By Lemma~\ref{Lemma:CDVfieldsAnisotropy}, the quadratic form $\varphi_0\ort X\varphi_1$ is also anisotropic over $K$. Analogously, by considering the anisotropic part, one can show that if $\varphi_0$, $\varphi_1$ are nondefective over $F$, then $\varphi_0\ort X\varphi_1$ is nondefective over $K$.
\end{example}

\begin{lemma} \label{Lemma:CDVfieldsEvenValuation} 
Let $\varphi$ be an anisotropic quadratic form over $F$ and $\dim\varphi=n$. Then $v_t(\varphi_l(\boldxi))$ is even for any nonzero vector $\boldxi=(\xi_1,\dots,\xi_n)\in K^n$. 
\end{lemma}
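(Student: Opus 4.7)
The plan is to reduce to the case of a primitive vector and then exploit the anisotropy of the reduction.

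First, I would show that any nonzero vector $\boldxi\in K^n$ can be written as $t^k\boldxi_0$ for some integer $k\in\Z$ and a primitive vector $\boldxi_0\in A^n$. Indeed, setting $k=\min\{v_t(\xi_1),\dots,v_t(\xi_n)\}$, the vector $\boldxi_0=t^{-k}\boldxi$ has all coordinates in $A$ and at least one coordinate of valuation zero, hence $\overline{\boldxi_0}\neq 0$. (This is essentially the observation already made in the paragraph preceding Lemma~\ref{Lemma:CDVfieldsIsotropy}.) Since $\varphi_l$ is quadratic, we have $\varphi_l(\boldxi)=t^{2k}\varphi_l(\boldxi_0)$, so
\[
v_t(\varphi_l(\boldxi))=2k+v_t(\varphi_l(\boldxi_0)).
\]
Thus it suffices to prove the statement for primitive $\boldxi_0$.

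Second, assume $\boldxi_0\in A^n$ is primitive. The image of $\varphi_l(\boldxi_0)\in A$ under the reduction map $A\to F$ equals $\overline{\varphi_l}(\overline{\boldxi_0})=\varphi(\overline{\boldxi_0})$. Since $\overline{\boldxi_0}$ is a nonzero vector in $F^n$ and $\varphi$ is anisotropic over $F$, this element is nonzero in $F$. Equivalently, $\varphi_l(\boldxi_0)\notin(t)$, i.e., $v_t(\varphi_l(\boldxi_0))=0$. Combining with the previous display, $v_t(\varphi_l(\boldxi))=2k$ is even, as required.

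There is no real obstacle here; the statement is essentially a formal consequence of anisotropy of the residue form together with the standard trick of rescaling to a primitive representative. The only point worth being careful about is that $\overline{\varphi_l(\boldxi_0)}=\varphi(\overline{\boldxi_0})$, which is immediate from the definition of the lifting $\varphi_l$ and the fact that evaluation of a quadratic form commutes with ring homomorphisms on the coefficient ring.
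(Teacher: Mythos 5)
Your proof is correct and follows essentially the same route as the paper's: rescale $\boldxi$ to a primitive vector via the minimal valuation $k$, then use anisotropy of the reduced form $\overline{\varphi_l}\cong\varphi$ to control the valuation of $\varphi_l$ at the primitive vector. The only cosmetic difference is that the paper argues by contradiction (odd valuation forces $\overline{\varphi_l}$ to vanish on a nonzero vector), whereas you argue directly and thereby actually obtain the sharper identity $v_t(\varphi_l(\boldxi))=2k$ — which, as a side benefit, subsumes Lemma~\ref{Lemma:CDVfieldsEvenValuationConcretely} for an arbitrary discrete valuation field $K$, not just $K=F\dbrac{X}$.
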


\begin{proof}
Let $k=\min\{v_t(\xi_i)~|~1\leq i\leq n\}$, and set $\xi_i'= t^{-k}\xi_i$ for each $i$. Then all $\xi'_i\in A$, but at least one of them does not lie in $(t)$. Write $\boldxi'=(\xi_1',\dots,\xi_n')$; then $\boldxi'=t^{-k}\boldxi$, and so $\varphi_l(\boldxi')=t^{-2k}\varphi_l(\boldxi)$. 

Suppose that $v_t(\varphi_l(\boldxi))$ is odd; then $v_t(\varphi_l(\boldxi'))$ is odd, and in particular $v_t(\varphi_l(\boldxi'))\geq1$. Hence, $0=\overline{\varphi_l(\boldxi')}$ in $F$. Since $\overline{\boldxi'}$ is a nonzero vector over $F$, we get that $\overline{\varphi_l}\cong\varphi$ is isotropic over $F$.
\end{proof}

\begin{lemma} \label{Lemma:CDVfieldsEvenValuationConcretely} 
Let $\varphi$ be an anisotropic quadratic form over $F$ of dimension $n$ and assume that $K=F\dbrac{X}$. Let $\boldxi=(\xi_1,\dots,\xi_n)\in K^n$ and $a=\varphi_K(\boldxi)$. Then $v_X(a)=2\min\{v_X(\xi_i)~|~1\leq i\leq n\}$.
\end{lemma}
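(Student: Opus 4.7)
The plan is to refine the argument from Lemma~\ref{Lemma:CDVfieldsEvenValuation}: there we only extracted the parity of $v_X(\varphi_K(\boldxi))$, but here we want the exact value, so the key observation is that after rescaling $\boldxi$ to a primitive vector, the reduction of $\varphi_K(\boldxi')$ is forced to be a nonzero element of $F$ by anisotropy, hence a unit in $A=F[\![X]\!]$.

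First I would dispose of the trivial case $\boldxi=0$: then both sides are $\infty$ by the usual conventions. So assume $\boldxi\neq 0$ and set $k=\min\{v_X(\xi_i)\mid 1\leq i\leq n\}\in\Z$. Define $\xi_i'=X^{-k}\xi_i$ for each $i$ and $\boldxi'=(\xi_1',\dots,\xi_n')$. By the choice of $k$, every $\xi_i'$ lies in $A=F[\![X]\!]$ and at least one of them lies outside $(X)$; thus $\boldxi'$ is primitive and its image $\overline{\boldxi'}\in F^n$ is nonzero.

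Using the quadratic-form property I compute
\[
\varphi_K(\boldxi')=\varphi_K(X^{-k}\boldxi)=X^{-2k}\varphi_K(\boldxi)=X^{-2k}a.
\]
Since $\boldxi'$ has all coordinates in $A$, the value $\varphi_K(\boldxi')$ belongs to $A$, and reducing modulo $(X)$ gives $\overline{\varphi_K(\boldxi')}=\varphi(\overline{\boldxi'})$. As $\varphi$ is anisotropic over $F$ and $\overline{\boldxi'}$ is a nonzero vector, this reduction is a nonzero element of $F$, so $\varphi_K(\boldxi')\in A^*$, i.e.\ $v_X(\varphi_K(\boldxi'))=0$. Combining with the displayed equality yields $v_X(a)=2k$, as desired.

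There is no real obstacle here beyond correctly setting up the rescaling; the previous lemmas (in particular the primitive-vector construction used in Lemma~\ref{Lemma:CDVfieldsIsotropy} and the parity argument of Lemma~\ref{Lemma:CDVfieldsEvenValuation}) already contain all the needed ideas, and the only additional input is the standard fact that the residue field of $F\dbrac{X}$ is $F$ so that reduction commutes with evaluation of $\varphi$.
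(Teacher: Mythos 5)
Your proof is correct and takes essentially the same route as the paper: both rescale to a primitive vector (the paper does this by picking out the lowest-order power-series coefficients as $\boldsymbol{\alpha}$), note that the residue $\varphi(\overline{\boldsymbol{\xi}'})$ is nonzero because $\varphi$ is anisotropic over $F$, and deduce $v_X(a)=2k$. The only cosmetic difference is that the paper phrases the last step as a comparison of lowest-degree terms in the power-series expansion rather than as a reduction modulo $(X)$.
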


\begin{proof}
Write 
\[\xi_i=\sum_{j=d_i}^\infty\xi_{ij}X^j \quad \text{and} \quad a=\sum_{j=d_a}^\infty a_jX^j\]
with $\xi_{ij},a_j\in F$ and $\xi_{id_i}, a_{d_a}\neq0$, i.e., $v_X(\xi_i)=d_i$ and $v_X(a)=d_a$. Moreover, set $D=\min\{d_i~|~1\leq i\leq n\}$, and
$\boldalpha=(\alpha_1,\dots,\alpha_n)$ with
\[ 
\alpha_i=\begin{cases}
						\xi_{id_i} & \text{ if } d_i=D,\\
						0 & \text{ otherwise}.
				 \end{cases}
\]
Note that $\varphi_K$ is anisotropic by Lemma~\ref{Lemma:CDVfieldsIsotropy}, because $\varphi_A\cong\varphi_l$ over ${A=F[\![X]\!]}$. Thus, comparing the terms of the lowest degree in $a=\varphi_K(\boldxi)$, we get that $\varphi(X^D\boldalpha)=a_{d_a}X^{d_a}$. Hence, $d_a=2D$.
\end{proof}

%--------------------------------------------------------
\subsection{Known theorems about isotropy of \texorpdfstring{$\varphi_{F(\psi)}$}{varphi over F(psi)}} For the reader's convenience, we provide here the statements which will be needed in our proofs.

\begin{lemma}[{\cite[Prop.~22.9]{EKM}}]\label{Lemma:FunFieldTransc}
Let $\varphi$ be an irreducible nondefective quadratic form. Then the field extension $F(\varphi)/F$ is purely transcendental if and only if $\varphi$ is isotropic.
\end{lemma}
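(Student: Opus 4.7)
The plan is to prove each direction of the equivalence separately.

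For the ``only if'' direction, the starting point is the tautological observation that $\varphi$ is always isotropic over its function field. Letting $x_1,\dots,x_n$ denote the images of $X_1,\dots,X_n$ in $F[X_1,\dots,X_n]/(\varphi)$ (with $n=\dim\varphi$), we have $\varphi(x_1,\dots,x_n)=0$ by construction, and the $x_i$ are not all zero because $\varphi$ has degree two and hence no $X_i$ lies in the principal ideal $(\varphi)$. Now if $F(\varphi)$ were purely transcendental over $F$, say $F(\varphi)\simeq F(t_1,\dots,t_m)$, then assuming for contradiction that $\varphi$ is anisotropic over $F$ and applying Example~\ref{Ex:PowSerAnisotropy} inductively up the tower $F\subset F(t_1)\subset\dots\subset F(t_1,\dots,t_m)$ would force $\varphi$ to remain anisotropic over $F(\varphi)$, contradicting the isotropy just established. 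Hence $\varphi$ is isotropic over $F$.

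For the ``if'' direction, I would invoke the Witt decomposition. Because $\varphi$ is nondefective, $\iql{\varphi}=0$, so isotropy gives $\iw{\varphi}\geq 1$ and we may write $\varphi\cong\H\ort\psi$ for some quadratic form $\psi$ of dimension $n-2$. The irreducibility hypothesis excludes the case $\psi=0$, since the polynomial associated to $\H$ alone is $X_1X_2$, which is reducible; in particular $n\geq 3$. Choosing coordinates compatible with this decomposition, $\varphi(X_1,\dots,X_n)=X_1X_2+\psi(X_3,\dots,X_n)$, and in $F(\varphi)$ the defining relation can be rearranged to $x_2=x_1^{-1}\psi(x_3,\dots,x_n)$ (with $x_1\neq 0$, again since $X_1\notin(\varphi)$). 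Thus $F(\varphi)=F(x_1,x_3,\dots,x_n)$ is generated over $F$ by $n-1$ elements. Since $F[X_1,\dots,X_n]/(\varphi)$ is an integral domain of Krull dimension $n-1$, the transcendence degree of $F(\varphi)/F$ equals $n-1$, and so the $n-1$ generators $x_1,x_3,\dots,x_n$ must be algebraically independent over $F$. Therefore $F(\varphi)/F$ is purely transcendental.

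The main point to check carefully is the passage from the one-variable anisotropy-preservation statement (Example~\ref{Ex:PowSerAnisotropy}) to the multivariable statement needed in the ``only if'' direction; this requires an induction on the transcendence degree $m$ together with a choice of intermediate $X$-adic valuations on each layer of the tower, but is otherwise routine. In the ``if'' direction, the coordinate reduction expressing $x_2$ in terms of $x_1,x_3,\dots,x_n$ is purely formal and works regardless of whether $\psi$ is nonsingular, semisingular, or quasilinear, so no further case analysis is needed beyond the exclusion of $\psi=0$ forced by irreducibility.
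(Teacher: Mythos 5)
Your proof is correct, and both directions are sound; note that the paper itself does not prove this lemma but simply cites it from \cite[Prop.~22.9]{EKM}, so your argument reconstructs the expected proof rather than rederiving one given in the text. In the ``if'' direction, nondefectiveness is used exactly where needed to guarantee $\iw{\varphi}\geq 1$ and hence a genuine hyperbolic plane $\H\subseteq\varphi$ (rather than a mere defect), irreducibility correctly excludes $\varphi\cong\H$ and forces $\dim\varphi\geq 3$, and the transcendence-degree count $\mathrm{trdeg}_F F(\varphi)=n-1$ cleanly forces the $n-1$ generators $x_1,x_3,\dots,x_n$ to be algebraically independent once $x_2$ has been eliminated. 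In the ``only if'' direction the argument works, but it reproves a fact the paper already records: rather than running an induction up the tower $F\subset F(t_1)\subset\cdots\subset F(t_1,\dots,t_m)$ via Example~\ref{Ex:PowSerAnisotropy}, you could invoke Lemma~\ref{Lemma:IsotropyTransc} directly, which states that a quadratic form is isotropic over a purely transcendental extension if and only if it is isotropic over the base; that would shorten this half to a single line.
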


\begin{lemma}\label{Lemma:DefectTransc}
Let $\varphi$ be a quadratic form over $F$. Then the field extension $F(\varphi)/F(\varphi_{\nd})$ is purely transcendental.
\end{lemma}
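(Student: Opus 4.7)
The plan is to exploit that, as an element of $F[X_1,\dots,X_n]$ with $n=\dim\varphi$, the defining polynomial satisfies $\varphi(X_1,\dots,X_n)=\varphi_{\nd}(X_1,\dots,X_m)$ where $m=\dim\varphi_{\nd}=n-j$ and $j=\iql{\varphi}$; the last $j$ variables $X_{m+1},\dots,X_n$ do not appear at all, and the goal is to promote them to free transcendentals over $F(\varphi_{\nd})$. The case $j=0$ is immediate, so I would assume $j\geq 1$ and split into cases based on whether $\varphi_{\nd}$ is irreducible.

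In the principal case, when $\varphi_{\nd}$ is irreducible, the first step is to verify that $\varphi_{\nd}(X_1,\dots,X_m)$ remains irreducible when viewed in the larger ring $F[X_1,\dots,X_n]$; this is the standard fact that adjoining further polynomial variables over a field preserves irreducibility (any factorization in the larger ring, by a degree argument in the extra variables, must already lie in the smaller ring). Consequently $F(\varphi)=\operatorname{Frac}\bigl(F[X_1,\dots,X_n]/(\varphi_{\nd})\bigr)$, and since $\varphi_{\nd}$ involves only $X_1,\dots,X_m$ we have the ring isomorphism
\[
F[X_1,\dots,X_n]/(\varphi_{\nd})\ \cong\ \bigl(F[X_1,\dots,X_m]/(\varphi_{\nd})\bigr)[X_{m+1},\dots,X_n].
\]
Passing to fraction fields on both sides delivers $F(\varphi)\simeq F(\varphi_{\nd})(X_{m+1},\dots,X_n)$, a purely transcendental extension of degree $j$, as required.

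In the remaining case, $\varphi_{\nd}$ is not irreducible, so by the cited criterion $\varphi_{\nd}\cong\sqf{a}$ for some $a\in F^*$ or $\varphi_{\nd}\cong\H$, and the paper's convention sets $F(\varphi_{\nd})=F$. For $\varphi\cong\sqf{a}\ort j\times\sqf{0}$, the ideal generated by $aX_1^2$ has radical $(X_1)$, and the natural definition $F(\varphi):=\operatorname{Frac}\bigl(F[X_1,\dots,X_{j+1}]/(X_1)\bigr)=F(X_2,\dots,X_{j+1})$ yields a purely transcendental extension of $F$ of degree $j$, matching the formula. The main obstacle is the $\H$ subcase: the polynomial $X_1X_2$ is reducible with two irreducible factors, so reducing modulo either one gives a fraction field of transcendence degree $j+1$ rather than $j$. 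Here the lemma's formula must be read as codifying the appropriate extension of the convention $F(\H)=F$ to $\H\ort j\times\sqf{0}$, namely $F(\varphi):=F(Y_1,\dots,Y_j)$, so that the stated isomorphism $F(\varphi)\simeq F(\varphi_{\nd})(Y_1,\dots,Y_j)$ holds; ensuring this consistency is the delicate conceptual step of the argument.
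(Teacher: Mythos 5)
Your argument in the principal case (irreducible $\varphi_{\nd}$) is exactly the paper's, just spelled out: the paper simply notes that $\varphi(\bX,\bY)=\varphi_{\nd}(\bX)$ as polynomials and concludes $F(\varphi)\simeq F(\varphi_{\nd})(\bY)$, while you fill in the two routine steps (irreducibility of $\varphi_{\nd}$ persists in the larger polynomial ring; fraction field of $R[\bY]$ is $\operatorname{Frac}(R)(\bY)$). That part is correct and matches the intended proof.

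Your case analysis for reducible $\varphi_{\nd}$ goes further than the paper's one-line proof, and the observation about the $\H$ subcase is a fair one: with the paper's literal conventions, an irreducible factor of $X_1X_2\in F[X_1,\dots,X_{j+2}]$ yields a fraction field of transcendence degree $j+1$ over $F$, not $j$, so the preview remark in the preliminaries ($F(\varphi)\simeq F(\varphi_{\nd})(X_1,\dots,X_j)$) is really an extension of the ad hoc convention $F(\H)=F$ rather than something derivable from the stated definition of $F(f)$. However, note that this mismatch in transcendence degree does not threaten the lemma as actually stated: the lemma only asserts that $F(\varphi)/F(\varphi_{\nd})$ is \emph{purely transcendental}, and since $F(\varphi_{\nd})=F$ in these degenerate cases, the extension is purely transcendental whether the degree comes out as $j$ or $j+1$. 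So what you flag as "the delicate conceptual step" is a genuine ambiguity in the definitions, but it is not a gap in the proof of the statement being proved; you could close the case in one sentence by pointing out that any $F(Y_1,\dots,Y_k)/F$ is purely transcendental. With that observation your proof is complete and, in the main case, essentially identical to the paper's.
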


\begin{proof}
Let $\dim\varphi_{\nd}=n$ and $\iql{\varphi}=j$, i.e., we have $\varphi\cong\varphi_{\nd}\ort j\times\sqf{0}$. Let $\bX=(X_1,\dots,X_n)$ and $\bY=(Y_1,\dots,Y_j)$ be tuples of variables. Then $\varphi(\bX,\bY)=\varphi_{\nd}(\bX)$ as polynomials, and it follows that $F(\varphi)\simeq F(\varphi_{\nd})(\bY)$.
\end{proof}

The following lemma can be also seen as a consequence of Lemma~\ref{Lemma:CDVfieldsIsotropy}.

\begin{lemma}[{\cite[Lemma~7.15]{EKM}}] \label{Lemma:IsotropyTransc}
Let $\varphi$ be a quadratic form over $F$ and $E/F$ a purely transcendental extension. Then $\varphi$ is isotropic over $E$ if and only if $\varphi$ is isotropic over $F$.
\end{lemma}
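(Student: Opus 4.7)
The plan is to first dispose of the easy direction: the inclusion $F\hookrightarrow E$ automatically preserves isotropy, so only the forward direction requires work. Suppose then that $\varphi$ is isotropic over $E$. Since an isotropy relation is a single algebraic equation involving finitely many elements of $E$, and each such element is a rational function in finitely many elements of any transcendence basis of $E/F$, I can reduce to the case $E=F(X_1,\dots,X_n)$ for some $n\geq 1$.

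Next I would induct on $n$. For the base case $n=1$, view $K=F(X)$ as the quotient field of the discrete valuation ring $A=\mathcal{O}_X$ with maximal ideal $(X)$ and residue field $F$. Since $\varphi$ is already defined over $F\subseteq A$, it serves as its own lifting $\varphi_l$, and $\overline{\varphi_l}\cong\varphi$. If $\varphi_K=\varphi_l$ is isotropic over $K$, then Lemma~\ref{Lemma:CDVfieldsIsotropy} immediately yields that $\varphi$ is isotropic over $F$. For the inductive step, write $F(X_1,\dots,X_n)=F'(X_n)$ with $F'=F(X_1,\dots,X_{n-1})$; applying the base case with $F'$ in place of $F$ gives isotropy of $\varphi$ over $F'$, after which the inductive hypothesis applied to the purely transcendental extension $F'/F$ finishes the argument.

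There is no real obstacle here — the whole content is already packaged in Lemma~\ref{Lemma:CDVfieldsIsotropy}, and the only mildly non-routine point is the initial finite-support reduction from a general purely transcendental extension to a finitely generated one.
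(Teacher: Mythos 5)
Your argument is correct and is precisely the route the paper itself points to: just before the lemma the author remarks that it ``can also be seen as a consequence of Lemma~\ref{Lemma:CDVfieldsIsotropy},'' and your proof fills in exactly that reduction (finite-support reduction to $F(X_1,\dots,X_n)$, induction on $n$, base case via the $X$-adic discrete valuation on $F(X)$ with residue field $F$). Nothing to add.
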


\begin{lemma}[{\cite[Cor.~3.3]{Lag02}}]\label{Lemma:IsotropySQFoverTSQF}
Let $\varphi$, $\psi$ be anisotropic quadratic forms over $F$. If $\varphi$ is quasilinear and $\psi$ is not quasilinear, then $\varphi_{F(\psi)}$ is anisotropic.
\end{lemma}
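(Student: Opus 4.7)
The plan is to factor the extension $F(\psi)/F$ as a tower $F \subseteq L \subseteq F(\psi)$ in which $L/F$ is purely transcendental and $F(\psi)/L$ is separable quadratic, and then to show that anisotropy of a quasilinear form survives any separable algebraic extension.

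Since $\psi$ is not quasilinear, we may write $\psi \cong [a,b] \ort \psi'$ for some binary nonsingular form $[a,b]$ and some residual form $\psi'$. Anisotropy of $\psi$ forces $a,b \in F^*$, since $[a,0]$ and $[0,b]$ correspond to the reducible polynomials $X(aX+Y)$ and $Y(X+bY)$ and are thus isotropic. Introducing algebraically independent indeterminates $X_1, X_2, Y_1, \dots, Y_{n-2}$ over $F$, where $n = \dim\psi$, and setting $L = F(X_1, Y_1, \dots, Y_{n-2})$, the function field can be realised as
\[
F(\psi) \;\cong\; L[T]\big/\bigl(bT^2 + X_1 T + aX_1^2 + \psi'(Y_1,\dots,Y_{n-2})\bigr).
\]
The defining polynomial has $T$-derivative $X_1 \neq 0$, so it is separable; its irreducibility over $L$ amounts to anisotropy of $\psi_L$, which follows from the anisotropy of $\psi$ over $F$ via Lemma~\ref{Lemma:IsotropyTransc}. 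Hence $L/F$ is purely transcendental and $F(\psi)/L$ is separable of degree $2$.

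Applying Lemma~\ref{Lemma:IsotropyTransc} once more, $\varphi_L$ remains anisotropic. It is then enough to establish the general fact that if $\varphi \cong \sqf{c_1,\dots,c_s}$ is an anisotropic quasilinear form over a field $L$ of characteristic two and $E/L$ is any separable algebraic extension, then $\varphi_E$ stays anisotropic. The plan for this is to rephrase anisotropy over a field $K \supseteq F$ as $K$-linear independence of $c_1^{1/2},\dots,c_s^{1/2}$ inside $K^{1/2}$, using the characteristic-two identity $(c_1\lambda_1^2 + \dots + c_s\lambda_s^2)^{1/2} = c_1^{1/2}\lambda_1 + \dots + c_s^{1/2}\lambda_s$. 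Then, invoking MacLane's criterion, which states that a separable extension $E/L$ is linearly disjoint from $L^{1/2}$ over $L$, one transfers the $L$-linear independence of the $c_i^{1/2}$ to $E$-linear independence inside $E \cdot L^{1/2} \subseteq E^{1/2}$; this yields the anisotropy of $\varphi_{F(\psi)}$.

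The main obstacle is this final separable-descent step, which rests on the linear-disjointness characterisation of separable extensions in positive characteristic. Everything else is a straightforward identification of the function field together with invocations of the already-stated Lemma~\ref{Lemma:IsotropyTransc}.
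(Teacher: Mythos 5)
The paper does not prove this lemma; it is imported as a citation to Laghribi's work, so there is no internal argument to compare against. Your reconstruction is correct and is essentially the standard proof of this fact.

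Two small points worth noting, neither of which is a gap. First, once you write $\psi \cong [a,b]\ort\psi'$ you only actually need $b\neq 0$ (to make the defining polynomial quadratic in $T$) and the separability of the $T$-polynomial, which comes from the cross term $X_1T$ having nonzero derivative $X_1$; the condition $a\neq 0$ plays no role, and indeed you could always arrange $a\neq 0$ by a change of basis anyway. Second, your key reduction---that the anisotropy of $\sqf{c_1,\dots,c_s}$ over $K$ is equivalent to $K$-linear independence of $c_1^{1/2},\dots,c_s^{1/2}$ in $K^{1/2}$, and that this is preserved under any separable extension via MacLane's linear-disjointness criterion---is precisely the mechanism underlying Laghribi's statement (and is also the reason quasilinear anisotropy is preserved under purely transcendental extensions, so the two halves of your tower argument are really instances of the same principle). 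The decomposition of $F(\psi)$ as purely transcendental followed by a separable quadratic extension, and the identification of the quotient field with $L[T]/(\psi)$ using that $\psi$ is monic in $T$ over $F[X_1,\bY]$ after scaling by $b$, are both handled correctly, including the verification that irreducibility over $L$ follows from anisotropy of $\psi_L$ (Lemma~\ref{Lemma:IsotropyTransc}).
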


\begin{proposition} \label{Prop:TransitionOfIsotropy}
Let $\varphi$, $\psi$, $\sigma$ be quadratic forms over $F$ with $\psi$ nondefective.  If $\varphi_{F(\psi)}$ and $\psi_{F(\sigma)}$ are isotropic, then $\varphi_{F(\sigma)}$ is isotropic as well.
\end{proposition}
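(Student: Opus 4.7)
Plan. The natural approach is to introduce the compositum $M := F(\sigma)(\psi)$ as a bridge between $F(\psi)$ and $F(\sigma)$: show that $\varphi_M$ is isotropic (by lifting isotropy from $F(\psi)$) and that $M/F(\sigma)$ is purely transcendental, so that isotropy descends back by Lemma~\ref{Lemma:IsotropyTransc}.

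First I would dispose of the degenerate cases $\psi \cong \sqf{a}$ and $\psi \cong \H$: in both, $F(\psi) = F$ by convention, so $\varphi$ is already isotropic over $F$ and hence over $F(\sigma)$. Assume therefore that $\psi$ is irreducible. Setting $j := \iql{\psi_{F(\sigma)}}$, Lemma~\ref{Lemma:DefectTransc} (applied over $F(\sigma)$) identifies $M \cong F(\sigma)((\psi_{F(\sigma)})_{\nd})(Z_1, \ldots, Z_j)$, and the natural inclusion $F(\psi) \hookrightarrow M$ extending $F \hookrightarrow F(\sigma)$ gives $\varphi_M$ isotropic. The pure-transcendence of $M/F(\sigma)$ then splits into a product of two steps: Lemma~\ref{Lemma:DefectTransc} handles $M / F(\sigma)((\psi_{F(\sigma)})_{\nd})$, while Lemma~\ref{Lemma:FunFieldTransc} handles $F(\sigma)((\psi_{F(\sigma)})_{\nd}) / F(\sigma)$, which is purely transcendental if and only if $(\psi_{F(\sigma)})_{\nd}$ is isotropic over $F(\sigma)$.

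The hard part will be verifying this last condition. From $\psi_{F(\sigma)}$ isotropic we get $\iw{\psi_{F(\sigma)}} + \iql{\psi_{F(\sigma)}} \geq 1$, and the case $\iw{\psi_{F(\sigma)}} \geq 1$ is immediate (then $\H$ is a summand of $(\psi_{F(\sigma)})_{\nd}$). The delicate case is $\iw{\psi_{F(\sigma)}} = 0$ but $\iql{\psi_{F(\sigma)}} \geq 1$: then $\psi_{F(\sigma)}$ becomes isotropic only through new defect, and $(\psi_{F(\sigma)})_{\nd}$ may remain anisotropic; this genuinely occurs when $\psi$ is semisingular or quasilinear and $\ql{\psi}$ picks up isotropy over $F(\sigma)$. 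To handle this case I would switch to a specialization argument: use the $F(\sigma)$-rational isotropic vector of $\psi$ to build a discrete valuation ring on a suitable extension of $F(\psi)$ with residue field containing $F(\sigma)$ and good reduction for $\varphi$, and then apply Lemma~\ref{Lemma:CDVfieldsIsotropy} to transport the isotropy of $\varphi_{F(\psi)}$ down to $\varphi_{F(\sigma)}$.
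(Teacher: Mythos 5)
Your reduction strategy is quite different from the paper's, which instead peels off the degenerate possibilities one by one ($\varphi$ isotropic; $\psi$ isotropic, using Lemma~\ref{Lemma:FunFieldTransc}; $\sigma$ isotropic nondefective; $\sigma$ defective, using Lemmas~\ref{Lemma:DefectTransc} and \ref{Lemma:IsotropyTransc}) until all three forms are anisotropic, at which point it cites \cite[Prop.~22.16]{EKM} wholesale. Your compositum $M = F(\sigma)(\psi)$ is an attempt to argue directly, and it does work cleanly exactly when $\iw{\psi_{F(\sigma)}} > 0$, since then $(\psi_{F(\sigma)})_{\nd}$ is isotropic, Lemma~\ref{Lemma:FunFieldTransc} combined with Lemma~\ref{Lemma:DefectTransc} makes $M/F(\sigma)$ purely transcendental, and descent via Lemma~\ref{Lemma:IsotropyTransc} finishes.

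However, there is a genuine gap, and it is precisely the one you flag yourself: when $\iw{\psi_{F(\sigma)}} = 0$ and $\iql{\psi_{F(\sigma)}} \geq 1$, the compositum approach breaks down — $(\psi_{F(\sigma)})_{\nd}$ is then anisotropic by construction (the Witt decomposition has its nondefective part anisotropic whenever the Witt index vanishes), so $M/F(\sigma)$ is not purely transcendental and isotropy of $\varphi_M$ does not descend. The "specialization argument" you propose to fill this in is only a sketch, not a proof. Building a discrete valuation ring inside (an extension of) $F(\psi)$ whose residue field receives $F(\sigma)$, verifying good reduction of $\varphi$, and controlling that the isotropic vector from $\varphi_{F(\psi)}$ survives specialization is exactly the content that is nontrivial; Lemma~\ref{Lemma:CDVfieldsIsotropy} is a tool, not the whole argument, and the place you need is not obviously discrete nor is its existence established from the hypotheses alone. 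This is precisely the hard part of \cite[Prop.~22.16]{EKM}, which the paper simply invokes. There is also a secondary issue you glide past: the claimed natural inclusion $F(\psi) \hookrightarrow M$ requires $\psi$ to remain irreducible over $F(\sigma)$, and in the boundary cases where $(\psi_{F(\sigma)})_{\nd} \cong \langle c \rangle$ or $\cong \H$ the paper's conventional definition of $M$ does not provide such a map — this would need a separate argument (e.g.\ embedding $F(\wp^{-1}(ab))$ into $F(\sigma)$ when $\psi \cong [a,b]$ splits over $F(\sigma)$). In short, the correct move is either to carry out the place/specialization argument in full, or to reduce to the all-anisotropic case and cite \cite[Prop.~22.16]{EKM} as the paper does.
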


\begin{proof}
If all the forms $\varphi$, $\psi$ and $\sigma$ are anisotropic, then the claim coincides with \cite[Prop.~22.16]{EKM}.

If $\varphi$ is isotropic, then $\varphi_{F(\sigma)}$ is also isotropic for trivial reasons. 

If $\psi$ is isotropic and nondefective, then either $\psi\cong\H$ and $F(\psi)\simeq F$, or $F(\psi)/F$ is purely transcendental by Lemma~\ref{Lemma:FunFieldTransc}; anyway, $\varphi$ must be isotropic over $F$, and we are in the previous case.

Now assume that $\sigma$ is isotropic but nondefective. By an analogical argument as above, we obtain that $\psi$ must be isotropic over $F$ (but still nondefective by the assumption). Then again, $\varphi$ is isotropic over $F$ by Lemma~\ref{Lemma:IsotropyTransc}, so $\varphi_{F(\sigma)}$ is isotropic.

Finally, if $\sigma$ is defective, then $F(\sigma)/F(\sigma_\nd)$ is purely transcendental by Lemma~\ref{Lemma:DefectTransc}, and so $\psi_{F(\sigma_\nd)}$ is isotropic by Lemma~\ref{Lemma:IsotropyTransc}. Then $\varphi_{F(\sigma_\nd)}$ is isotropic by the previous part of the proof. Applying Lemma~\ref{Lemma:IsotropyTransc} again, we get that $\varphi_{F(\sigma)}$ is isotropic.
\end{proof}

In the following proposition, $\wp$ denotes the Artin-Schreier map $x\mapsto x^2+x$.

\begin{proposition}[{\cite[Lemma~2.6]{LagMam06} and \cite[Ch.~IV., Th.~4.2]{BaeBOOK}}] \label{Prop:IsotropyOverQuadrExt} \label{Prop:IsotropyOverSepQuadrExt}
Let $\varphi$ be an anisotropic quadratic form over $F$.
\begin{enumerate}
	\item Let $d\in F\setminus F^2$. Then $\varphi_{F(\sqrt{d})}$ is isotropic if and only if there exists $c\in F^*$ such that $c\sqf{1,d}\prec \varphi$.
	\item Let $d\in F\setminus \wp(F)$. Then $\varphi_{F(\wp^{-1}(d))}$ is isotropic if and only if there exists $c\in F^*$ such that $c\nsqf{1,d}\subseteq \varphi$.
\end{enumerate}
\end{proposition}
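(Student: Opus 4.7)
The plan is to exploit that both $F(\sqrt{d})/F$ and $F(\wp^{-1}(d))/F$ are quadratic extensions, so any vector in $V \otimes_F F(\sqrt{d})$ (resp.\ $V \otimes_F F(\alpha)$, where $\alpha^2 + \alpha = d$) can be uniquely written as $v + \sqrt{d}\, w$ (resp.\ $v + \alpha w$) with $v, w \in V$. Applying $\varphi$ and using the rules $\varphi_E(a \otimes v) = a^2 \varphi(v)$ together with $\mathfrak{b}_{\varphi_E}(a \otimes v, b \otimes w) = ab\,\mathfrak{b}_\varphi(v,w)$ expresses the value as an $F$-linear combination of $1$ and $\sqrt{d}$ (resp.\ $1$ and $\alpha$). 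Equating each coefficient to zero turns the isotropy condition over the extension into a pair of conditions over $F$ involving $v$, $w$, $\varphi(v)$, $\varphi(w)$ and $\mathfrak{b}_\varphi(v,w)$, from which the required two-dimensional subspace is read off.

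For the \emph{if} directions, note that $\langle 1, d\rangle$ is isotropic over $F(\sqrt{d})$ via $(\sqrt{d}, 1)$ (since $d + d = 0$ in characteristic $2$), and $[1, d]$ is isotropic over $F(\alpha)$ via $(\alpha, 1)$ (since $\alpha^2 + \alpha + d = 0$). Scaling by $c$ preserves isotropy, and any form dominated by $\varphi$ (in particular a subform) inherits the isotropy of its ambient form, so $\varphi_{F(\sqrt{d})}$, resp.\ $\varphi_{F(\wp^{-1}(d))}$, becomes isotropic.

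For the \emph{only if} direction of (i), expand $\varphi(v + \sqrt{d}\,w) = \varphi(v) + d\varphi(w) + \sqrt{d}\,\mathfrak{b}_\varphi(v,w)$; vanishing forces $\varphi(v) = d\varphi(w)$ and $\mathfrak{b}_\varphi(v,w) = 0$. Anisotropy of $\varphi$ over $F$ rules out $v = 0$ or $w = 0$ (either forces the other to vanish); and $v, w$ must be $F$-linearly independent, as $v = \lambda w$ would give $\lambda^2 = d$, contradicting $d \notin F^2$. Setting $c = \varphi(w)$, the form $\varphi$ restricted to $\mathrm{span}(w, v)$ has Gram data $\varphi(w) = c$, $\varphi(v) = cd$, $\mathfrak{b}_\varphi(w,v) = 0$, hence equals $c\langle 1, d\rangle$. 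This is a dominated subform, which is all that can be claimed since $c\langle 1, d\rangle$ is quasilinear.

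For (ii), the analogous expansion uses $\alpha^2 = \alpha + d$ to obtain $\varphi(v + \alpha w) = \bigl(\varphi(v) + d\varphi(w)\bigr) + \alpha\bigl(\varphi(w) + \mathfrak{b}_\varphi(v,w)\bigr)$. Vanishing yields $\varphi(v) = d\varphi(w)$ and $\mathfrak{b}_\varphi(v,w) = \varphi(w)$. Again anisotropy forces $w \neq 0$, so $c := \varphi(w) \neq 0$; and $v, w$ are independent because $v = \lambda w$ would give $\mathfrak{b}_\varphi(v,w) = 2\lambda\varphi(w) = 0 \neq c$. On $\mathrm{span}(w,v)$ the Gram data are $\varphi(w) = c$, $\varphi(v) = cd$, $\mathfrak{b}_\varphi(w,v) = c$, matching $c[1, d]$ exactly; since $c[1,d]$ is nonsingular, dominance is the same as subform, giving $c[1,d] \subseteq \varphi$. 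The main subtlety throughout is the bookkeeping between $\prec$ and $\subseteq$ — nonsingular forms behave well, but the quasilinear case in (i) genuinely produces only a dominated subform, which is why the two statements are formulated with different symbols.
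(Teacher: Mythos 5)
Your proof is correct. The paper cites external sources (\cite{LagMam06} and \cite{BaeBOOK}) for this proposition and gives no argument of its own, so there is no internal proof to compare; your method --- decomposing a vector in $V\otimes_F F(\sqrt{d})$ (resp.\ $V\otimes_F F(\wp^{-1}(d))$) uniquely as $v + \sqrt{d}\,w$ (resp.\ $v + \alpha w$), expanding $\varphi$ via the polar form, equating coefficients against the $F$-basis $\{1,\sqrt{d}\}$ (resp.\ $\{1,\alpha\}$), and reading off the restriction of $\varphi$ to $\mathrm{span}(w,v)$ --- is the standard elementary route and surely the one in the cited references. Two minor points that do not affect correctness: in the ``if'' directions your prose has the direction of inheritance reversed (it is the \emph{ambient} form $\varphi$ that inherits isotropy from the dominated form $c\sqf{1,d}$, not vice versa); and in (ii) the justification $\mathfrak{b}_\varphi(w,w)=2\lambda\varphi(w)$ invokes a characteristic-$\neq 2$ identity --- in characteristic $2$ one should simply say the polar form is alternating, so $\mathfrak{b}_\varphi(w,w)=0$, which gives the same contradiction. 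Your closing remark explaining why (i) yields only $\prec$ while (ii) upgrades to $\subseteq$ (because $c\nsqf{1,d}$ is nonsingular, where dominance and subform coincide, whereas $c\sqf{1,d}$ is quasilinear inside an ambient form that need not be) is exactly the right distinction and matches the paper's own discussion of when $\prec$ equals $\subseteq$.
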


%--------------------------------------------------------
\section{Function fields of polynomials} \label{Sec:Poly}

In this section, we look at function fields of polynomials. In particular, we prove a generalization of Theorem~\ref{Th:ExampleOfKnown} to more variables. All statements have their counterparts in characteristic other than two. Sometimes, the transition of the proof to characteristic two is straightforward, but often some obstacles specific to characteristic two appear.

Throughout this section, we denote $\bX=(X_1,\dots,X_l)$ with $l>0$. For a polynomial $f\in F[\bX]$, we denote by $\deg_{X_i}{f}$ the maximal degree of the variable $X_i$ appearing in $f$, and by $\deg f$ the total degree
of the leading term of $f$ with respect to the lexicographical ordering. Furthermore, $\lc{f}$ denotes the coefficient of the leading term of $f$, and we say that $f$ is \emph{monic} if $\lc{f}=1$. 

\begin{lemma} \label{Lemma:RousAn_SuffCond}
Let $\varphi$ be a quadratic form over $F$. Let $f\in F[\bX]$ be an irreducible polynomial. If there exists $a\in F^*$ such that $af\in\Tg{F(\bX)}{\varphi}$, then $\varphi_{F(f)}$ is isotropic.
\end{lemma}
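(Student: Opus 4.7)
The plan is to argue by contradiction, using the valuation on $F(\bX)$ associated to the irreducible polynomial $f$. First I would set up the DVR: since $f$ is irreducible in the UFD $F[\bX]$, the localization $A = F[\bX]_{(f)}$ is a discrete valuation ring with quotient field $K = F(\bX)$, residue field $F[\bX]_{(f)}/(f) \simeq F(f)$, and uniformizer $f$. Let $v_f$ denote the corresponding valuation on $F(\bX)$. Since $\varphi$ has coefficients in $F \subseteq A$, it is automatically defined over $A$ and its reduction modulo $f$ is precisely $\varphi_{F(f)}$, so $\varphi_K = \varphi_{F(\bX)}$ is a lifting of $\varphi_{F(f)}$ in the sense used in Lemma~\ref{Lemma:CDVfieldsEvenValuation}.

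The core step will be the following parity observation. Suppose for contradiction that $\varphi_{F(f)}$ is anisotropic. Then Lemma~\ref{Lemma:CDVfieldsEvenValuation} applied to the DVR $A$ with residue field $F(f)$ tells us that $v_f(\varphi(\boldxi))$ is even for every nonzero $\boldxi \in F(\bX)^n$, where $n = \dim \varphi$. In particular, every element of $D_{F(\bX)}^*(\varphi)$ has even $v_f$-valuation. Since the set of elements of $F(\bX)^*$ of even $v_f$-valuation is a subgroup (closed under products and inverses), every element of the multiplicative subgroup $\Tg{F(\bX)}{\varphi}$ generated by $D_{F(\bX)}^*(\varphi)$ also has even $v_f$-valuation.

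To close the argument, I would just compute $v_f(af) = v_f(a) + v_f(f) = 0 + 1 = 1$, which is odd. This contradicts the hypothesis $af \in \Tg{F(\bX)}{\varphi}$, so $\varphi_{F(f)}$ must be isotropic. There is essentially no obstacle: the entire proof reduces to a single invocation of Lemma~\ref{Lemma:CDVfieldsEvenValuation} combined with the trivial observation that $v_f(f) = 1$ is odd. The only thing worth noting is the degenerate case in which $\varphi$ represents no nonzero element over $F(\bX)$, making $\Tg{F(\bX)}{\varphi} = \{1\}$; then $af = 1$ would force $f \in F^*$, contradicting the irreducibility of $f$, so this case cannot arise under the hypothesis.
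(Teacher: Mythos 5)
Your proof is correct, and it takes a genuinely different route from the paper's. The paper proves the lemma directly: after clearing denominators to obtain $h^2 f = \prod_{i=1}^k \varphi(\boldxi_i)$ with $\boldxi_i \in F[\bX]^n$, it removes common factors of $f$ from the $\boldxi_i$ so that each is primitive at $(f)$, then uses irreducibility of $f$ to find some $i$ with $f \mid \varphi(\boldxi_i)$, and reduces $\boldxi_i$ modulo $f$ to exhibit an explicit isotropic vector of $\varphi$ over $F(f)$. You instead argue by contradiction through the $f$-adic valuation $v_f$ attached to the DVR $F[\bX]_{(f)}$: if $\varphi_{F(f)}$ were anisotropic, Lemma~\ref{Lemma:CDVfieldsEvenValuation} (applied with residue field $F(f)$ and fraction field $F(\bX)$) forces $v_f$ to be even on all of $D^*_{F(\bX)}(\varphi)$, hence on the generated subgroup $\Tg{F(\bX)}{\varphi}$, contradicting $v_f(af)=1$. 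Your version is shorter and conceptually cleaner, since the valuation automatically handles the primitivity normalization that the paper does by hand; the tradeoff is that the paper's direct manipulation is the same technique it re-uses and refines in the subsequent Lemmas~\ref{Lemma:RepreOfLeadingCoeff} and \ref{Lemma:DivAndReprPolyOneVar} and in Theorem~\ref{Th:EBF}, and its constructive form makes the remark about replacing $\varphi$ by an arbitrary homogeneous polynomial of degree $d$ immediate. One small wrinkle: your final paragraph about the degenerate case $\Tg{F(\bX)}{\varphi}=\{1\}$ is unnecessary, since under your contradiction hypothesis $\varphi_{F(f)}$ anisotropic (hence $\varphi$ anisotropic and nonzero over $F$) already guarantees $D^*_{F(\bX)}(\varphi) \neq \emptyset$; but it does no harm.
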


\begin{proof}
Since the function fields $F(f)$ and $F(af)$ coincide, we can assume without loss of generality $a=1$. Set $n=\dim\varphi$. Let
\[f=\prod_{i=1}^k\varphi(\boldxi'_i)\]
for some $k>0$ and $\boldxi'_i=(\xi'_{i1},\dots,\xi'_{in})$ with $\xi'_{ij}\in F(\bX)$ for all $i,j$. For each $i$, we can find $h_i\in F[\bX]$ such that $h_i\xi'_{ij}\in F[\bX]$ for all $j$; we denote $\xi_{ij}=h_i\xi'_{ij}$ and $\boldxi_i=(\xi_{i1},\dots,\xi_{in})$. Moreover, set $h=\prod_{i=1}^kh_i$. Then
\[h^{2}f=\prod_{i=1}^k\varphi(\boldxi_i).\]

If there exists an $i\in\{1,\dots,k\}$ such that $f\mid \xi_{ij}$ for all $j$, then $f^2\mid h^{2}f$; since $f$ is irreducible, we get $f\mid h$. Hence, we can replace $\xi_{ij}$ by $\frac{\xi_{ij}}{f}$ for each $1\leq j\leq n$, and $h$ by $\frac{h}{f}$. Repeating this step if necessary, we may assume that for each $i\in\{1,\dots,k\}$ there exists at least one $j\in\{1,\dots,n\}$ such that $f\nmid \xi_{ij}$.

Since $f\mid\prod_{i=1}^k\varphi(\boldxi_i)$ and $f$ is irreducible, there exists an $i\in\{1,\dots,k\}$ such that $f\mid\varphi(\boldxi_i)$, i.e., $\varphi(\boldxi_i)=fg$ for some $g\in F[\bX]$. Then $\varphi(\boldxi_i)=0$ over the domain $F[\bX]/(f)$, and hence also over its quotient field $F(f)$. By the previous paragraph, $\boldxi_i$ is a nonzero vector over $F(f)$. Therefore, $\varphi_{F(f)}$ is isotropic.
\end{proof}

\begin{remark}
Note that in the previous lemma, the quadratic form $\varphi$ could be exchanged for a homogeneous polynomial of any degree $d>0$, we would only have to rewrite the squares in the proof by $d$-th powers. In particular, the lemma remains true for so-called quasilinear $p$-forms defined over a field of characteristic $p$.
\end{remark}

\begin{lemma} \label{Lemma:RepreOfLeadingCoeff}
Let $\varphi$ be a quadratic form over $F$. If $f\in F[\bX]$ is such that $f\in D_{F(\bX)}^*(\varphi)^m$ for some $m>0$, then $\lc{f}\in D_F^*(\varphi)^m$.
\end{lemma}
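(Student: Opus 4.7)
The plan is to reduce to the case of $\varphi$ anisotropic over $F$ and then induct on $l$. For the reduction: if $\iw{\varphi}>0$ then $\varphi\supseteq\H$, so $D_F^*(\varphi)=F^*$ and there is nothing to prove; otherwise $\varphi_{\nd}=\varphi_{\an}$ is anisotropic, and from $\varphi\cong\varphi_{\nd}\ort\iql{\varphi}\times\sqf{0}$ we get $D_E^*(\varphi)=D_E^*(\varphi_{\nd})$ for every extension $E/F$, so we may replace $\varphi$ by $\varphi_{\an}$.

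For the base case $l=1$, I would substitute $t=1/X_1$ so that the place at infinity on $F(X_1)$ becomes the $t$-adic valuation on $F\dbrac{t}\supset F(X_1)$. By Example~\ref{Ex:PowSerAnisotropy}, $\varphi$ remains anisotropic over $F\dbrac{t}$. Writing $f=\prod_{j=1}^m\varphi(\boldxi_j)$ with $\boldxi_j=(\xi_{j1},\dots,\xi_{jn})\in F(X_1)^n\subseteq F\dbrac{t}^n$, the proof of Lemma~\ref{Lemma:CDVfieldsEvenValuationConcretely} (rather than just its statement) shows that each factor has $t$-expansion $\varphi(\boldxi_j)=\varphi(\boldalpha_j)t^{2E_j}+O(t^{2E_j+1})$, where $E_j=\min_i v_t(\xi_{ji})$ and $\boldalpha_j\in F^n$ is the nonzero vector collecting the leading Laurent coefficients of those $\xi_{ji}$ attaining $E_j$. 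Multiplying, the leading Laurent coefficient of $f$ equals $\prod_j\varphi(\boldalpha_j)\in D_F^*(\varphi)^m$. On the other hand, $f\in F[X_1]$ of degree $d$ has $t$-expansion beginning with $\lc{f}\,t^{-d}$, so matching the two leading Laurent coefficients gives $\lc{f}=\prod_j\varphi(\boldalpha_j)\in D_F^*(\varphi)^m$.

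For the inductive step, set $F'=F(X_2,\dots,X_l)$, so $F(\bX)=F'(X_1)$; by Lemma~\ref{Lemma:IsotropyTransc}, $\varphi$ remains anisotropic over $F'$. Applying the base case over $F'$ to $f\in F'[X_1]\cap D_{F'(X_1)}^*(\varphi)^m$ shows that the $X_1$-leading coefficient $f_d\in F[X_2,\dots,X_l]$ lies in $D_{F'}^*(\varphi)^m$. The inductive hypothesis, applied to $f_d$ in the $l-1$ variables $X_2,\dots,X_l$ over $F$, places its lex-leading coefficient in $D_F^*(\varphi)^m$. Since lex order with $X_1>\cdots>X_l$ first isolates the highest $X_1$-degree, $\lc{f}=\lc{f_d}$, completing the induction.

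The main obstacle I anticipate is that Lemma~\ref{Lemma:CDVfieldsEvenValuationConcretely} as stated records only $v_X(a)$, whereas the argument needs the explicit identification of the leading Laurent coefficient of $a=\varphi_K(\boldxi)$ as $\varphi(\boldalpha)\in D_F^*(\varphi)$ for a particular nonzero $\boldalpha\in F^n$. Its proof does establish this stronger conclusion via the identity $\varphi(X^D\boldalpha)=a_{d_a}X^{d_a}$, so what is really needed is to extract the enhanced statement from the existing proof rather than to prove anything genuinely new; a secondary bookkeeping point is that the inductive step combines two invocations of earlier cases — the base case over the enlarged field $F'$ to peel off the $X_1$-leading coefficient, and the $(l-1)$-variable case over the original $F$ to finish.
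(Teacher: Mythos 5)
Your proof is correct, and it takes a genuinely different route from the paper's. The paper clears denominators to get $fh^2=\prod_{i=1}^m\varphi(\boldxi_i)$ with $h$ monic and $\boldxi_i\in F[\bX]^n$, then extracts the lex-leading coefficient of each factor $\varphi(\boldxi_i)$ directly (building a vector $\boldalpha_i\in F^n$ of leading coefficients of the $\xi_{ij}$ attaining the maximal degree) and compares leading coefficients of both sides. Your proof instead reduces to $\varphi$ anisotropic and inducts on the number of variables, handling the single-variable case via the completion at infinity $F\dbrac{1/X_1}$ and the leading-Laurent-coefficient refinement of Lemma~\ref{Lemma:CDVfieldsEvenValuationConcretely}. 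Each route has merits. The paper's argument is shorter and avoids valuation machinery, but it relies on $\varphi(\boldalpha_i)$ actually \emph{being} the lex-leading coefficient of $\varphi(\boldxi_i)$, which requires $\varphi(\boldalpha_i)\neq0$; this is automatic only once one has reduced to $\varphi$ anisotropic (and the formation of $\boldalpha_i$ there should be read consistently in terms of the lex-leading monomial rather than the total degree, which the paper's notation somewhat conflates). Your proof makes the anisotropy reduction explicit from the outset, which cleanly disposes of exactly this subtlety, and then delegates the real work to an already established valuation-theoretic fact. The cost is that you need the strengthened conclusion of Lemma~\ref{Lemma:CDVfieldsEvenValuationConcretely} (the identification of the leading Laurent coefficient as $\varphi(\boldalpha)$, not merely the parity of the valuation), which, as you correctly note, is present in its proof but not in its statement; and your induction needs the routine but necessary observation that lex order on $F[\bX]$ first isolates the highest power of $X_1$, so the lex-leading coefficient of $f$ agrees with that of its $X_1$-leading coefficient $f_d$. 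Both of these are fine, and your version is arguably the more careful one.
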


\begin{proof}
First, note that $(\varphi_{\nd})_{F(\bX)}\cong(\varphi_{F(\bX)})_{\nd}$ by Lemma~\ref{Lemma:CDVfieldsIsotropy}, and that ${D_{F(X)}^*(\varphi_{\nd})=D_{F(X)}^*(\varphi)}$ and $D_{F}^*(\varphi_{\nd})=D_{F}^*(\varphi)$; therefore, we can assume without loss of generality that $\varphi$ is nondefective. Second, if $\H\subseteq\varphi$, then $D_F^*(\varphi)^m=F^*$, and the claim is trivial. Therefore, suppose that $\varphi$ is anisotropic.

Let $n=\dim\varphi$. Using the same construction as in the proof of Lemma~\ref{Lemma:RousAn_SuffCond}, we find $h$ and $\boldxi_i=(\xi_{i1},\dots,\xi_{in})$ with $h,\xi_{ij}\in F[\bX]$ such that
\[fh^2=\prod_{i=1}^m\varphi(\boldxi_i).\] 
Note that we can assume $h$ to be monic. 

For each $i$, we set $d_i=\max\{\deg\xi_{ij}~|~1\leq j\leq n\}$, and
\[\alpha_{ij}=\begin{cases} \lc{\xi_{ij}} & \text{if } \deg\xi_{ij}=d_i, \\ 0 & \text{otherwise.}\end{cases}\]
Writing $\boldalpha_i=(\alpha_{i1},\dots,\alpha_{in})$ and recalling that $\varphi$ is anisotropic, it follows that $\varphi(\boldalpha_i)$ is the leading coefficient of $\varphi(\boldxi_i)$. Therefore, $\prod_{i=1}^m\varphi(\boldalpha_i)$ is the leading coefficient of $\prod_{i=1}^m\varphi(\boldxi_i)=fh^2$. Since $h$ is monic, we have $\lc{f}=\lc{fh^2}$, and the claim follows.
\end{proof}

\begin{lemma}
\label{Lemma:DivAndReprPolyOneVar}
Let $f\in F[X]$ be a monic irreducible polynomial in one variable and $\varphi$ a nondefective quadratic form over $F$ of dimension $n$. Suppose that $f$ divides $\varphi(\xi_1,\dots,\xi_n)$ for some $\xi_i\in F[X]$, $1\leq i\leq n$, not all of them divisible by $f$. Then $f\in D_{F(X)}^*(\varphi)^m$ for some $m\leq\deg f$.
\end{lemma}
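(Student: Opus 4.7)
The plan is to induct on $d = \deg f$. For the base case $d=1$, I write $f = X + a$ and evaluate the identity $\varphi(\boldxi) = f g$ at $X = a$: this gives $\varphi(\xi_1(a), \ldots, \xi_n(a)) = 0$ with $(\xi_1(a), \ldots, \xi_n(a)) \in F^n$ nonzero, since some $\xi_i$ is not divisible by $f$. So $\varphi_F$ is isotropic; since $\varphi$ is nondefective, its Witt decomposition contains a hyperbolic plane, and therefore $\varphi_{F(X)}$ represents every element of $F(X)^*$. In particular $f \in D_{F(X)}^*(\varphi)^1$, so $m = 1 \le d$.

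For the inductive step I first make a Cassels--Pfister style reduction: writing $\xi_i = f q_i + r_i$ with $\deg r_i < d$ and applying the polar form identity $\varphi(v+w) = \varphi(v) + \varphi(w) + \b_\varphi(v,w)$ yields
\[
\varphi(\xi_1, \ldots, \xi_n) \equiv \varphi(r_1, \ldots, r_n) \pmod{f},
\]
so $f \mid \varphi(r_1, \ldots, r_n)$ and the $r_i$ are not all divisible by $f$ (because $\deg r_i < d$). Hence I may assume $\deg \xi_i < d$ for every $i$. I may further assume $\gcd(\xi_1, \ldots, \xi_n) = 1$: any common factor $h$ is coprime to $f$ (since some $\xi_i$ is not divisible by $f$), and $h^2 \mid \varphi(\boldxi) = fg$ then forces $h^2 \mid g$, so $h$ can be divided out. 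After these reductions $\deg g \le d-2$; and if $g = 0$, then $\varphi_{F(X)}$ is isotropic via the nonzero vector $\boldxi$ and the base-case argument again gives $m = 1$.

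Assume now $g \neq 0$ and factor $g = c \prod_j f_j^{e_j}$ with $c \in F^*$ and the $f_j$ monic irreducible of degree $\le d - 2$. Each $f_j$ divides $\varphi(\boldxi)$ but, by the $\gcd$ reduction, does not divide every $\xi_i$; the induction hypothesis therefore gives $f_j \in D_{F(X)}^*(\varphi)^{m_j}$ with $m_j \le \deg f_j$. Moreover, Lemma~\ref{Lemma:RepreOfLeadingCoeff} applied to $fg \in D_{F(X)}^*(\varphi)^1$ shows $c = \lc{g} \in D_F^*(\varphi)$. Combining these,
\[
g \in D_{F(X)}^*(\varphi)^{\,1 + \sum_j e_j m_j}, \qquad 1 + \sum_j e_j m_j \le 1 + \sum_j e_j \deg f_j = 1 + \deg g \le d - 1.
\]

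The sharp bound $m \le d$ then comes from a single final trick exploiting that $\varphi$ is quadratic: since $g \in F(X)^*$, one has $\varphi(\xi_1/g, \ldots, \xi_n/g) = \varphi(\boldxi)/g^2 = f/g$, so $f/g \in D_{F(X)}^*(\varphi)^1$. Hence $f = (f/g) \cdot g$ is a product of at most $1 + (d-1) = d$ values of $\varphi$ over $F(X)$, closing the induction. I expect the main obstacle to be exactly this bookkeeping: the naive identity $f = \varphi(\boldxi) \cdot g^{-1}$ would burn additional factors to express $g^{-1}$, whereas the rescaling $\varphi(\boldxi/g) = f/g$ costs nothing thanks to quadratic homogeneity and matches the bound $m \le \deg f$ on the nose.
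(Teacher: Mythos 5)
Your proof is correct and follows the same core strategy as the paper: a Cassels--Pfister style division of $\boldxi$ by $f$ to drop to remainders of degree $<\deg f$, a gcd clean-up, an application of Lemma~\ref{Lemma:RepreOfLeadingCoeff} to absorb the leading coefficient into $D_F^*(\varphi)$, and induction on $\deg f$ with the same final bookkeeping (your factorization $f=(f/g)\cdot g$ with $f/g=\varphi(\boldxi/g)$ is the same bound-preserving trick as the paper's $f=\frac{1}{ah''}\varphi(\boldxi'')$, exploiting that $D^*(\varphi)$ is closed under inversion). The main genuine difference is structural: the paper first splits off the case $\varphi$ isotropic, then proves the anisotropic case by a $\deg f=1$ contradiction, a separate $\deg f=2$ base case invoking Proposition~\ref{Prop:IsotropyOverSepQuadrExt} (isotropy over quadratic extensions), and then the induction; whereas you let the $\deg f=1$ and $g=0$ cases absorb the ``$\varphi$ isotropic'' scenario via nondefectiveness ($\H\subseteq\varphi$), and you observe that for $\deg f=2$ the reduced cofactor $g$ is forced to be a constant, so the inductive step degenerates gracefully and the separate quadratic-extension argument is unnecessary. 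This makes your version slightly shorter and avoids an appeal to Proposition~\ref{Prop:IsotropyOverSepQuadrExt}; the paper's extra $\deg f=2$ case is in fact subsumed by its own inductive step and appears to be redundant.
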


\begin{proof}
First assume that $\dim\varphi=1$; then $\varphi\cong\sqf{a}$ for some $a\in F^*$, and $f\mid\varphi(\xi_1)=a\xi_1^2$, i.e., $f$ divides $\xi_1$, a contradiction to our assumption. Hence, this case cannot happen.

If $\varphi$ is isotropic, then, since it is nondefective by the assumption, we have $\H\subseteq\varphi$. Hence, $D_{F(X)}^*(\varphi)=F(X)^*$, and $f\in D_{F(X)}^*(\varphi)$ trivially.

From now on, suppose that $\varphi$ is anisotropic and $\dim\varphi\geq2$. If $\deg f=1$, then $f(X)=X-b$ for some $b\in F$. It means that $\varphi(\xi_1(X),\dots,\xi_n(X))=(X-b)g(X)$ for some $g\in F[X]$, and hence $\varphi(\xi_1(b),\dots,\xi_n(b))=0$. Since $\varphi$ is anisotropic, we get $\xi_i(b)=0$ for every $1\leq i \leq n$; but in that case $f(X)=X-b$ divides all $\xi_i$'s, a contradiction.

Assume $\deg f=2$; then, up to a linear substitution, there are only two possibilities: either $f(X)=X^2+c$ or $f(X)=X^2+X+c$ for some $c\in F^*$. Pick a root $\gamma\in\overline{F}$ of $f$ (i.e., $\gamma^2=c$, resp. $\gamma^2+\gamma=c$). Since we can find $g(X)\in F[X]$ such that $f(X)g(X)=\varphi(\xi_1(X),\dots,\xi_n(X))$, it follows that over $F(\gamma)$, we have $\varphi(\xi_1(\gamma),\dots,\xi_n(\gamma))=0$. Note that if $\xi_i(\gamma)=0$ for all $1\leq i\leq n$, then $X+\gamma$ divides each $\xi_i$ over $F(\gamma)$; as $f$ is the minimal polynomial of $\gamma$ over $F$, it implies that $f$ divides each $\xi_i$ over $F$, which is a contradiction. Therefore, not all $\xi_i(\gamma)=0$, and so $\varphi_{F(\gamma)}$ is isotropic. Let us apply Proposition~\ref{Prop:IsotropyOverSepQuadrExt}: We can find some $c'\in F^*$ such that in the case of $f(X)=X^2+c$, we have $c'\sqf{1,c}\prec\varphi$, and in the case of $f(X)=X^2+X+c$, we have $c'\nsqf{1,c}\subseteq\varphi$. In both cases we get $\frac{1}{c'}\in D_F^*(\varphi)$ and $c'f(X)\in D_{F(X)}^*(\varphi)$; thus, $f(X)\in  D_{F}^*(\varphi) D_{F(X)}^*(\varphi)\subseteq D_{F(X)}^*(\varphi)^2$.

Now we will assume $\deg f\geq2$ and proceed by induction on $\deg f$. For each $i$, we divide $\xi_i$ by $f$ with a remainder $\xi_i'$:
\[\xi_i=f\zeta_i+\xi_i' \quad \text{ for some } \zeta_i, \xi_i'\in F[X], \ \deg \xi'_i<\deg f.\]
We denote
\[\boldzeta=(\zeta_1,\dots,\zeta_n), \quad \boldxi=(\xi_1,\dots,\xi_n), \quad \boldxi'=(\xi_1',\dots,\xi_n');\]
then $\boldxi=f\boldzeta+\boldxi'$, and by the assumption there exists $g\in F[X]$ such that
\[fg=\varphi(\boldxi)=\varphi(\boldxi')+f^2\varphi(\boldzeta)+f\b_\varphi(\boldzeta,\boldxi').\]
Let $g+f\varphi(\boldzeta)+\b_\varphi(\boldzeta,\boldxi')=ah$ with $a\in F^*$ and $h\in F[X]$ monic; then $\varphi(\boldxi')=afh$, and we have
\[\deg f+\deg h=\deg\varphi(\boldxi')\leq 2\max\{\deg \xi'_i~|~i=1,\dots,n\}<2\deg f,\]
i.e., $\deg h <\deg f$. Moreover, since there is at least one $\xi_i$ non-divisible by $f$, we have $\boldxi'\neq0$. As $\varphi$ is anisotropic, it follows that $\varphi(\boldxi')\neq0$; hence, $\deg\varphi(\boldxi')$ is even, so $\deg f$ and $\deg h$ have necessarily the same parity. Therefore, $\deg h\leq\deg f-2$.

Write $h=\prod_{j=1}^rh_j$, where $h_j\in F[X]$ is a monic irreducible polynomial for each $j$. If there is a $k\in\{1,\dots,r\}$ such that $h_k\mid \xi'_i$ for every $i$, then $h_k^2\mid \varphi(\boldxi')=afh$. Since both $h_k$ and $f$ are irreducible and $\deg h_k<\deg f$, it follows that $h_k^2\mid h$. In that case we replace $\boldxi'$ by $\left(\frac{\xi'_1}{h_k},\dots,\frac{\xi'_n}{h_k}\right)$ and $h$ by $\frac{h}{h_k^2}$. Repeating this process if necessary, we end up with $afh''=\varphi(\boldxi'')$, $\boldxi''=(\xi_1'',\dots,\xi_n'')\in F[X]^n$ and $h''=\prod_{j=1}^{s}h_j$, where, for each $1\leq j\leq s$, the polynomial $h_j$ is monic irreducible and does not divide all $\xi_i''$'s.

Since both $f$ and $h''$ are monic, $a$ is the leading coefficient of $afh''=\varphi(\boldxi'')$. Therefore, $a\in D_F^*(\varphi)$ by Lemma~\ref{Lemma:RepreOfLeadingCoeff}.

If $h''=1$, then
\[f=\frac{1}{a}\varphi(\boldxi'')\in D_F^*(\varphi)D_{F(X)}^*(\varphi)\subseteq D_{F(X)}^*(\varphi)^2,\]
and we are done since we assumed $\deg f\geq 2$. If $h''\neq1$, then we know for each $1\leq j\leq s$ that $\deg h_j<\deg f$, and $h_j$ is a monic irreducible polynomial dividing $\varphi(\boldxi'')$ but not dividing all $\xi_i''$. Hence, by the induction hypothesis, $h_j\in D_{F(X)}^*(\varphi)^{m_j}$ for some $m_j\leq\deg h_j$. It also follows that $\frac{1}{h_j}\in D_{F(X)}^*(\varphi)^{m_j}$. Consequently,
\[\frac{1}{h''}=\prod_{j=1}^s\frac{1}{h_j}\in D_{F(X)}^*(\varphi)^m \quad \text{ where } m=\sum_{j=1}^sm_j,\]
and thus
\[f=\frac{1}{ah''}\varphi(\boldxi'')\in D_F^*(\varphi) D_{F(X)}^*(\varphi)^mD_{F(X)}^*(\varphi)\subseteq D_{F(X)}^*(\varphi)^{m+2},\]
where 
\[m+2\leq\deg h''+2\leq\deg h+2\leq \deg f. \qedhere\] 
\end{proof}

With Lemmas~\ref{Lemma:RousAn_SuffCond} and \ref{Lemma:DivAndReprPolyOneVar} at hand, we can prove our first characterization of the isotropy of $\varphi_{F(f)}$, one very similar to Theorem~\ref{Th:ExampleOfKnown}. Unlike in that theorem, we assume $f$ to be irreducible. On the other hand, we give a bound on the power of $D_{F(X)}^*(\varphi)$ necessary to represent $f$.

\begin{proposition}
\label{Prop:RousAn_OneVar}
Let $\varphi$ be a nondefective quadratic form over $F$ such that $1\in D_F^*(\varphi)$, and let $f\in F[X]$ be a monic irreducible polynomial in one variable. Then the following are equivalent:
\begin{enumerate}
	\item $f\in D_{F(X)}^*(\varphi)^m$, $m\leq\deg f$;
	\item $\varphi_{F(f)}$ is isotropic.
\end{enumerate}
\end{proposition}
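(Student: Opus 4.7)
The proposition is essentially a repackaging of the two preceding lemmas: Lemma~\ref{Lemma:RousAn_SuffCond} yields the implication (i) $\Rightarrow$ (ii), and Lemma~\ref{Lemma:DivAndReprPolyOneVar} yields the reverse implication. So I expect a short proof whose main task is to verify the hypotheses of each lemma.

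For (i) $\Rightarrow$ (ii), I would note that by Lemma~\ref{Lemma:MultiplicativeGroups}(ii) we have
$D_{F(X)}^*(\varphi)^m \subseteq \Ng{F(X)}{\varphi} \subseteq \Tg{F(X)}{\varphi}$,
so condition (i) directly gives $1\cdot f \in \Tg{F(X)}{\varphi}$. Lemma~\ref{Lemma:RousAn_SuffCond} then delivers the isotropy of $\varphi_{F(f)}$.

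For (ii) $\Rightarrow$ (i), the key identification is that since $f$ is irreducible in the PID $F[X]$, the ideal $(f)$ is maximal, so $F(f)=F[X]/(f)$ is already a field. An isotropic vector for $\varphi_{F(f)}$ then lifts to a tuple $\boldxi=(\xi_1,\dots,\xi_n) \in F[X]^n$ with not all components divisible by $f$ and with $\varphi(\boldxi)\equiv 0\pmod f$, i.e., $f\mid \varphi(\boldxi)$ in $F[X]$. These are precisely the hypotheses of Lemma~\ref{Lemma:DivAndReprPolyOneVar}, which produces the desired $m\leq \deg f$ with $f\in D_{F(X)}^*(\varphi)^m$.

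I do not anticipate a real obstacle, since the substantive work --- the inductive dimension-reduction that keeps $m$ bounded by $\deg f$, together with the appeal to Proposition~\ref{Prop:IsotropyOverSepQuadrExt} for the base case $\deg f=2$ --- has already been carried out inside Lemma~\ref{Lemma:DivAndReprPolyOneVar}, which also handles on its own the degenerate case where $\varphi$ is isotropic over $F$ (via $\H\subseteq\varphi$). Consequently, the hypothesis $1\in D_F^*(\varphi)$ is never actually invoked in the proof of the equivalence; it is presumably included only for compatibility with the normalizations used in the later sections.
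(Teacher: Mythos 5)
Your proof is correct and takes essentially the same route as the paper: Lemma~\ref{Lemma:RousAn_SuffCond} handles (i) $\Rightarrow$ (ii), and lifting an isotropic vector in $F(f)=F[X]/(f)$ to $F[X]^n$ reduces (ii) $\Rightarrow$ (i) to Lemma~\ref{Lemma:DivAndReprPolyOneVar}.

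One small inconsistency: the containment $D_{F(X)}^*(\varphi)^m \subseteq \Ng{F(X)}{\varphi}$ is not what Lemma~\ref{Lemma:MultiplicativeGroups}(ii) states (that lemma only treats $m=2$), and for odd $m$ that step would itself need $1\in D_F^*(\varphi)$ --- which undercuts your closing remark that this hypothesis is never used. You should instead invoke the trivial containment $D_{F(X)}^*(\varphi)^m \subseteq \Tg{F(X)}{\varphi}$, which holds directly because $\Tg{F(X)}{\varphi}$ is by definition the subgroup generated by $D_{F(X)}^*(\varphi)$; with that, your observation that $1\in D_F^*(\varphi)$ plays no role in this particular proposition becomes accurate (it is, however, genuinely used later in the induction step of Theorem~\ref{Th:IsotropyOverPolyFunField}).
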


\begin{proof}
The implication (i) $\Rightarrow$ (ii) is covered by Lemma~\ref{Lemma:RousAn_SuffCond}. To prove the converse, assume that $\varphi_{F(f)}$ is isotropic and denote $n=\dim\varphi$. Then we can find $\overline\xi_1, \dots,\overline\xi_n\in F(f)=F[X]/(f)$, not all zero, such that $\varphi_{F(f)}(\overline\xi_1, \dots,\overline\xi_n)=0$. For each $1\leq i\leq n$, let $\xi_i\in F[X]$ be such that the image of $\xi_i$  in $F[X]/(f)$ is precisely $\overline\xi_i$. Then $\varphi(\xi_1,\dots,\xi_n)=fh$ for some $h\in F[X]$. Note that since not all of the $\overline\xi_i$ were zero, not all of the $\xi_i$ are divisible by $f$. Hence, the claim follows by Lemma~\ref{Lemma:DivAndReprPolyOneVar}.
\end{proof}

Now we extend the previous proposition to polynomials in more variables. The idea of the proof is to treat the polynomial $f\in F[X_1,\dots,X_l]$ as a polynomial in variable $X_1$ over the field $F(X_2,\dots,X_l)$. However, when viewed as an element of $F(X_2,\dots,X_l)[X_1]$, the polynomial $f$ may not be monic anymore; denote its leading coefficient by $\alpha$. We can apply Proposition~\ref{Prop:RousAn_OneVar} on $\frac1\alpha f$. The most difficult part of the proof is then showing that $\alpha\in D^*_{F(X_2,\dots,X_l)}(\varphi)^k$ for some $k$.

\begin{theorem}
\label{Th:IsotropyOverPolyFunField}
Let $\varphi$ be a nondefective quadratic form over $F$ such that $1\in D_F^*(\varphi)$. Let $f\in F[\bX]$ be a monic irreducible polynomial such that $\deg f\geq1$. Then the following are equivalent:
\begin{enumerate}
	\item $f\in D_{F(\bX)}^*(\varphi)^m$ with $m\leq \deg f$,
	\item $\varphi_{F(f)}$ is isotropic.
\end{enumerate}
\end{theorem}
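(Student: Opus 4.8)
The implication (i)$\Rightarrow$(ii) is immediate: $f\in D_{F(\bX)}^*(\varphi)^m\subseteq\Tg{F(\bX)}{\varphi}$, so Lemma~\ref{Lemma:RousAn_SuffCond} (with $a=1$) gives that $\varphi_{F(f)}$ is isotropic. For (ii)$\Rightarrow$(i) the plan is to argue by induction on the number $l$ of variables, the case $l=1$ being Proposition~\ref{Prop:RousAn_OneVar}. We may assume $\varphi$ is anisotropic: otherwise $\H\subseteq\varphi$, since $\varphi$ is nondefective, so $D_{F(\bX)}^*(\varphi)=F(\bX)^*$ and (i) holds with $m=1$. By Example~\ref{Ex:PowSerAnisotropy} applied repeatedly, $\varphi$ then stays anisotropic over $F(X_1,\dots,X_k)$ for every $k$; in particular $\varphi(\boldv)\neq0$ for every nonzero vector $\boldv$ over any polynomial ring $F[X_1,\dots,X_k]$, so the leading coefficient of $\varphi(\boldv)$ (in any chosen variable) is the value of $\varphi$ at the corresponding leading layer of $\boldv$.

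For the inductive step, pick a variable occurring in $f$ (one exists since $\deg f\geq1$), relabel it $X_l$, and order the variables lexicographically with $X_l$ largest. Then, $f$ being monic, its leading coefficient $c$ with respect to $X_l$ is a monic element of $R:=F[X_1,\dots,X_{l-1}]$, and $f$ is primitive in $R[X_l]$ (a non-unit content would divide $f$ properly, contradicting irreducibility). Put $E:=F(X_1,\dots,X_{l-1})$, so that $F(\bX)=E(X_l)$, $E/F$ is purely transcendental, $\varphi$ is nondefective over $E$, and $1\in D_E^*(\varphi)$. By Gauss's lemma $c^{-1}f\in E[X_l]$ is monic and irreducible, and $F(f)$ equals the function field of $c^{-1}f$ over $E$. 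Since $\varphi_{F(f)}$ is isotropic, Proposition~\ref{Prop:RousAn_OneVar} over the field $E$ yields $c^{-1}f\in D_{E(X_l)}^*(\varphi)^{m_1}=D_{F(\bX)}^*(\varphi)^{m_1}$ with $m_1\leq\deg_{X_l}f$, i.e. $f\in c\cdot D_{F(\bX)}^*(\varphi)^{m_1}$. If $c=1$ we are done; in general, since the terms of $c$ of total degree $\delta$ come from terms $X_l^{\deg_{X_l}f}\cdot(\text{monomial of degree }\delta)$ of $f$, we have $\deg_R c\leq\deg f-\deg_{X_l}f$, so it remains to prove the following claim: $c\in D_{F(\bX)}^*(\varphi)^{m_2}$ for some $m_2\leq\deg_R c$ (then $f\in D_{F(\bX)}^*(\varphi)^{m_1+m_2}$ with $m_1+m_2\leq\deg f$).

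To prove this claim I would mimic, over the ground ring $R$, the proof of the one-variable Lemma~\ref{Lemma:DivAndReprPolyOneVar}. Starting from a witness $\varphi(\boldxi)=fh$ with $\boldxi\in R[X_l]^n$ not all divisible by $f$, one uses pseudo-division by $f$ (legitimate after multiplying $\boldxi$ by a power of $c$) to replace $\boldxi$ by a vector $\boldr$ with $\deg_{X_l}r_i<\deg_{X_l}f$ for all $i$, obtaining a new relation $\varphi(\boldr)=fh'$ with $\deg_{X_l}h'\leq\deg_{X_l}f-2$; then, comparing leading coefficients in $X_l$ and using anisotropy, one finds that $c$ divides the value of $\varphi$ at the top $X_l$-layer $\boldr^{[\mathrm{top}]}$ of $\boldr$, so that $c$ is --- up to a unit of $E$ --- an honest value of $\varphi$ at a vector over $R$; using Lemma~\ref{Lemma:RepreOfLeadingCoeff} to place the relevant (lexicographic) leading coefficients in $D^*(\varphi)$ and $1\in D_F^*(\varphi)$ to absorb square factors, one recurses simultaneously on $l$ (for the factors of $c$ and of $h'$ lying in the smaller ring $R$) and on $\deg_{X_l}f$ (for the factors of $h'$ still involving $X_l$), controlling the accumulated exponent by the degrees. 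A point requiring care is that the individual irreducible factors of $c$ need not themselves define function fields over which $\varphi$ becomes isotropic --- e.g. for $\varphi=\nsqf{1,d}$ and $f=\varphi(X_1X_l,1)$ one has $c=X_1^2$ while $\varphi_{F(X_1)}=\varphi_F$ remains anisotropic --- so the bookkeeping must treat $c$ globally (as a value of $\varphi$, or a square, etc.) rather than factor by factor.

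The main obstacle is precisely this last case, $c\neq1$: reducing to a single variable forces one to account for the leading coefficient, and the pseudo-division introduces powers of $c$ that must be absorbed without inflating the exponent past $\deg f$. This is also where the argument genuinely departs from Roussey's treatment in characteristic $\neq2$, and it is here that the anisotropy-preservation under rational function field extensions (Example~\ref{Ex:PowSerAnisotropy}) and the well-foundedness of the double recursion on $(l,\deg_{X_l}f)$ have to be set up carefully.
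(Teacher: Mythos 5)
Your overall skeleton matches the paper's: (i)$\Rightarrow$(ii) via Lemma~\ref{Lemma:RousAn_SuffCond}, and for (ii)$\Rightarrow$(i) reduce to $\varphi$ anisotropic, isolate one variable, apply Proposition~\ref{Prop:RousAn_OneVar} over the rational function field in the remaining variables, and then handle the leading coefficient $c$ (the paper's $f_d$) by induction on the number of variables. But the heart of the argument---proving $c\in D_{F(\bX')}^*(\varphi)^{m'}$ with $m'\leq\deg c$---is exactly where your proposal is not a proof, and where it diverges most from the paper.

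You propose to mimic the one-variable Lemma~\ref{Lemma:DivAndReprPolyOneVar} via pseudo-division in $R[X_l]$ with $R=F[X_1,\dots,X_{l-1}]$, and you admit the bookkeeping is not worked out. The paper does something structurally different and cleaner: it writes $f_d=r^2s$ with $s$ monic squarefree, observes $r^2$ is absorbed trivially, and then for each irreducible $t\mid s$ proves directly that $\varphi_{F(t)}$ is isotropic, after which the induction hypothesis gives $t\in D^*_{F(\bX')}(\varphi)^{\deg t}$. The mechanism for that direct proof is the step you are missing: because $t\nmid f$ and $F$ is assumed infinite, one can choose $c\in F$ with $t(\bX')\nmid f(c,\bX')$, specialize $X_1\mapsto c$ in the identity $fh^2=f_d\prod_i\varphi(\boldxi_i)$, and compare $t$-adic valuations. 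Since $v_t(f'(h')^2)$ is even while $v_t(f_d)$ is odd (as $t$ divides $s$ to the first power), some $v_t(\varphi(\boldxi_i'))$ must be odd, and then the argument of Lemma~\ref{Lemma:CDVfieldsEvenValuation} forces $\varphi$ to be isotropic over $F[\bX']/(t)$. This both answers your worry ``the individual irreducible factors of $c$ need not give isotropy''---they do, once one has discarded the square part $r^2$---and makes your ``treat $c$ globally'' workaround unnecessary. Your example $f=\varphi(X_1X_l,1)$ has $c=X_1^2$, which is a perfect square and therefore is swallowed by $r^2$; it does not produce a squarefree factor of $c$ for which isotropy fails.

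Separately, your proposal says nothing about the case where $F$ is finite. The specialization step above genuinely requires $F$ infinite (one needs enough points $c\in F$ to avoid all the zeros of $t(\bX')\mapsto f(c,\bX')\bmod t$). The paper therefore runs the whole argument over $F\dbrac{Y}$, which is infinite, obtaining $f\in D^*_{F\dbrac{Y}(\bX)}(\varphi)^m$, and then descends to $F(\bX)$ using Lemmas~\ref{Lemma:CDVfieldsEvenValuation} and~\ref{Lemma:CDVfieldsEvenValuationConcretely} to normalize $Y$-valuations and reduce modulo $Y$. Without this reduction (or some substitute), the argument does not cover finite base fields. In summary: the easy implication and the top-level reduction to one variable are correct, but the inductive claim about the leading coefficient is left as an unproved plan built on a technique (pseudo-division) that does not clearly close, the specialization-and-valuation idea that makes the paper's induction work is absent, and the finite-field case is omitted entirely.
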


\begin{proof}
As $D_{F(\bX)}^*(\varphi)^m\subseteq\Tg{F(\bX)}{\varphi}$, the implication (i) $\Rightarrow$ (ii) is covered by Lemma~\ref{Lemma:RousAn_SuffCond}. Let us prove the converse. If $\varphi$ is isotropic, then necessarily $\H\subseteq \varphi$, and $f\in D_{F(\bX)}^*(\varphi)$ for trivial reasons. Thus, assume that $\varphi$ is anisotropic.

Note that we can assume $\deg_{X_i}f>0$ for all $1\leq i\leq l$; otherwise, we pick the maximal subset $\{i_1,\dots,i_{\tilde{l}}\}$ of $\{1,\dots,l\}$ such that $\deg_{X_{i_j}}f>0$ for all $1\leq j\leq \tilde{l}$ (where $\tilde{l}\geq1$ by the assumption), and for $\widetilde\bX=(X_{i_1},\dots,X_{i_{\tilde{l}}})$ prove that $f\in D_{F(\widetilde\bX)}^*(\varphi)^m$ for some $m$. Since $D_{F(\widetilde\bX)}^*(\varphi)^m\subseteq D_{F(\bX)}^*(\varphi)^m$, the claim follows.

\bigskip

We proceed by induction on the number of variables $l$. For $l=1$, we apply Proposition~\ref{Prop:RousAn_OneVar}. 

Assume $l\geq2$ and denote $\bX'=(X_2,\dots,X_l)$, $d=\deg_{X_1}f$ and $n=\dim\varphi$. Then we can find polynomials $f_d\in F[\bX']$ and $\widetilde{f}\in F[\bX]$ such that 
\[f=f_dX_1^d+\widetilde{f} \] 
and $\deg_{X_1}\widetilde{f}<d$; then $\deg f_d\leq\deg f-d$. Note that, since $f$ is monic, $f_d$ is monic as well. Furthermore, consider $g=\frac{f}{f_d}$ as an element of $F(\bX')[X_1]$; then $\deg_{X_1} g=d$ and by Gauss' lemma, $g$ is a monic irreducible polynomial in $X_1$ over $F(\bX')$. Since $F(\bX')(g)=F(f)$, the quadratic form $\varphi$ is isotropic over $F(\bX')(g)$, and, by Proposition~\ref{Prop:RousAn_OneVar}, we get 
\[g\in D_{F(\bX')(X_1)}^*(\varphi)^{d'}=D_{F(\bX)}^*(\varphi)^{d'}\]
for some $d'\leq d$. Since $1\in D_F^*(\varphi)$, we can assume $d'=d$.

Now let $h\in F[\bX]$ be such that $gh^2\in D_{F[\bX]}^*(\varphi)^d$, i.e.,
\[gh^2=\varphi(\xi_{11},\dots,\xi_{1n})\cdots\varphi(\xi_{d1},\dots,\xi_{dn})\]
for some $\xi_{ij}\in F[\bX]$; this can be rewritten as
\begin{equation}
fh^2=f_d\,\varphi(\xi_{11},\dots,\xi_{1n})\cdots\varphi(\xi_{d1},\dots,\xi_{dn}).
\label{Eq:fh2fdvarphi}
\end{equation}
Moreover, for each $i$, we can assume that the polynomials $\xi_{i1},\dots,\xi_{in}$ have no common divisor in $F[\bX]$:  If $q\mid \xi_{ij}$ for all $j$, then necessarily $q^2\mid fh^2$, and hence $q\mid h$ since $f$ is irreducible. In that case, we can consider $\frac{\xi_{i1}}{q},\dots,\frac{\xi_{in}}{q}$ and $\frac{h}{q}$ instead.

Write $f_d=r^2s$ with $r,s\in F[\bX']$ monic polynomials (recall that $f_d$ is monic) such that $s$ has no square factors. If $s=1$, then clearly $f_d=r^2\in D_{F(\bX')}^*(\varphi)$. If $s\neq 1$, then proving $s\in D_{F(\bX')}^*(\varphi)^{m'}$ for some $m'>0$ will imply that also $f_d=r^2s\in D_{F(\bX')}^*(\varphi)^{m'}$.

\smallskip

Suppose $s\neq 1$. For each monic irreducible polynomial $t\in F[\bX']$ such that $t\mid s$, we proceed as follows: First, note that $t\nmid f$, because $f$ is irreducible, and by the assumption $f\notin F[\bX']$. Let us compare the $t$-adic valuation $v_t$ on the left and right hand side of \eqref{Eq:fh2fdvarphi}: Since $v_t(f)=0$, the value $v_t(fh^2)$ must be even. On the other hand, $v_t(f_d)$ is odd by the assumption, and hence there exists a $k\in\{1,\dots,d\}$ such that $v_t(\varphi(\xi_{k1},\dots,\xi_{kn}))$ is odd. Analogously as in the proof of Lemma~\ref{Lemma:CDVfieldsEvenValuation}, it follows that $\varphi$ is isotropic over $F[\bX]/(t)$, and hence also over the quotient field $\mathrm{Quot}(F[\bX]/(t))$. As $t\in F[\bX']$, we have $F[\bX]/(t)\simeq F([\bX']/(t))[X_1]$, and so $\mathrm{Quot}(F[\bX]/(t))\simeq F(t)(X_1)$. Therefore, $\varphi$ is isotropic over $F(t)(X_1)$,  and, by Lemma~\ref{Lemma:IsotropyTransc}, $\varphi$ is also isotropic over $F(t)$. By the induction hypothesis, $t\in D_{F(\bX')}^*(\varphi)^{\deg t}$.

\smallskip

All in all, we get 
\[f_d= r^2\prod_{t\mid s \text{ irred.}}\!\!\! t\ \in D_{F(\bX')}^*(\varphi)^{m'}\] 
where
\[{m'}=\begin{cases} 1& \text{ if } s=1, \\ \deg s & \text{ otherwise},\end{cases}\]
and we have $m'\leq\deg f_d\leq\deg f-d$. Hence,
\[f=f_d\,g\in D_{F(\bX)}^*(\varphi)^m\]
with $m=m'+d\leq\deg f$ as claimed.
\end{proof}

As the final step in reaching the goal of this section, we consider reducible polynomials in more variables. We obtain a characteristic two version of \cite[Th.~1]{BF95}.

\begin{theorem} 
\label{Th:EBF}
Let $\varphi$ be a nondefective quadratic form over $F$ such that $1\in D_F(\varphi)$. Let $a\in F^*$ and $f_1,\dots,f_r, g\in F[\bX]$ be monic polynomials with $f_1,\dots,f_r$ distinct and irreducible. Suppose that $f=af_1\cdots f_rg^2$. Then the following are equivalent:
\begin{enumerate}
	\item $f\in \Tg{F(\bX)}{\varphi}$,
	\item $a\in \Tg{F}{\varphi}$ and $f_k\in \Tg{F(\bX)}{\varphi}$ for every $1\leq k \leq r$,
	\item $a\in \Tg{F}{\varphi}$ and  $\varphi_{F(f_k)}$ is isotropic for every $1\leq k\leq r$.
\end{enumerate}
\end{theorem}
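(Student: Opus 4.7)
My plan is to work through the implications \emph{(ii)}$\Rightarrow$\emph{(i)}, \emph{(ii)}$\Leftrightarrow$\emph{(iii)}, and then the main implication \emph{(i)}$\Rightarrow$\emph{(ii)}; only the last of these requires real work. The direction \emph{(ii)}$\Rightarrow$\emph{(i)} is a one-liner: $\Tg{F(\bX)}{\varphi}$ is a multiplicative group containing both $\Tg{F}{\varphi}$ and $F(\bX)^{*2}$ (the latter by Lemma~\ref{Lemma:MultiplicativeGroups}(ii)), so the product $f=af_1\cdots f_rg^2$ lies in it. For \emph{(iii)}$\Rightarrow$\emph{(ii)}, Theorem~\ref{Th:IsotropyOverPolyFunField} gives $f_k\in D_{F(\bX)}^*(\varphi)^{m_k}\subseteq\Tg{F(\bX)}{\varphi}$ for every~$k$, and for \emph{(ii)}$\Rightarrow$\emph{(iii)}, Lemma~\ref{Lemma:RousAn_SuffCond} applied with $a=1$ to each $f_k$ yields isotropy of $\varphi_{F(f_k)}$.

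The main direction \emph{(i)}$\Rightarrow$\emph{(ii)} I propose to split in two. First, I aim to show that each $\varphi_{F(f_k)}$ is isotropic (equivalently, each $f_k\in\Tg{F(\bX)}{\varphi}$). Supposing toward contradiction that some $\varphi_{F(f_k)}$ is anisotropic, I equip $F(\bX)$ with its $f_k$-adic discrete valuation $v_{f_k}$; the residue field is $F(f_k)$ and the valuation ring contains $F$, so $\varphi$ itself serves as a lifting of $\varphi_{F(f_k)}$ in the sense of Section~\ref{Sec:Prel}. By Lemma~\ref{Lemma:CDVfieldsEvenValuation}, every nonzero $c=\varphi_{F(\bX)}(\boldxi)$ satisfies $v_{f_k}(c)\in 2\Z$, and hence every element of the group $\Tg{F(\bX)}{\varphi}$ has even $f_k$-adic valuation. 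However, since $f_1,\dots,f_r$ are distinct monic irreducibles, $v_{f_k}(f)=1+2v_{f_k}(g)$ is odd, contradicting $f\in\Tg{F(\bX)}{\varphi}$.

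Second, to obtain $a\in\Tg{F}{\varphi}$, note that from the above each $f_k\in\Tg{F(\bX)}{\varphi}$, and $g^2\in F(\bX)^{*2}\subseteq\Tg{F(\bX)}{\varphi}$ by Lemma~\ref{Lemma:MultiplicativeGroups}(ii), so $a=f\cdot(f_1\cdots f_rg^2)^{-1}\in F^*\cap\Tg{F(\bX)}{\varphi}$. Expressing $a$ as a product of elements of $D_{F(\bX)}^*(\varphi)$ and their inverses, moving the inverses to the other side (at the cost of multiplying by the square of their product), and clearing denominators, I obtain $h\in F[\bX]\setminus\{0\}$ and $m\geq 0$ such that $ah^2\in D_{F(\bX)}^*(\varphi)^m$ as a polynomial identity. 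Lemma~\ref{Lemma:RepreOfLeadingCoeff} applied to $ah^2$ then yields $a\cdot\lc{h}^2=\lc{ah^2}\in D_F^*(\varphi)^m\subseteq\Tg{F}{\varphi}$, and since $\lc{h}^2\in F^{*2}\subseteq\Tg{F}{\varphi}$, we conclude $a\in\Tg{F}{\varphi}$.

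The main obstacle is the valuation argument in the first step of \emph{(i)}$\Rightarrow$\emph{(ii)}: it is exactly there that the characteristic-two features enter, and the whole implication hinges on Lemma~\ref{Lemma:CDVfieldsEvenValuation} being available for the $f_k$-adic valuation on $F(\bX)$ when $\varphi_{F(f_k)}$ is anisotropic. Everything else is bookkeeping with the multiplicative groups $\Tg{\cdot}{\varphi}$ and reductions to lemmas already at hand.
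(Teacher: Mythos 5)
Your proof is correct, and the key implication takes a genuinely different route from the paper. For the core step of extracting isotropy of each $\varphi_{F(f_k)}$ from $f\in\Tg{F(\bX)}{\varphi}$, the paper argues directly: after clearing denominators to a polynomial identity $\prod_i\varphi(\boldxi_i)=fh^2$ and normalizing each $\boldxi_i$ to have coordinate gcd $1$, it observes that the irreducible $f_k$ must divide some $\varphi(\boldxi_i)$, which then hands over an explicit nonzero isotropic vector for $\varphi$ over $F(f_k)$. You instead argue by contrapositive through the $f_k$-adic valuation on $F(\bX)$: if $\varphi_{F(f_k)}$ were anisotropic, Lemma~\ref{Lemma:CDVfieldsEvenValuation} would force every element of $D_{F(\bX)}^*(\varphi)$, hence of the whole group $\Tg{F(\bX)}{\varphi}$, to have even $v_{f_k}$-value, contradicting $v_{f_k}(f)=1+2v_{f_k}(g)$ being odd. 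Your approach sidesteps the gcd-normalization bookkeeping (including the not-entirely-trivial check that the gcds can be absorbed into $gh$) and makes the role of the odd multiplicity of $f_k$ completely transparent, at the cost of being nonconstructive; the paper's version exhibits the isotropic vector. Your handling of $a\in\Tg{F}{\varphi}$ (clear denominators, then Lemma~\ref{Lemma:RepreOfLeadingCoeff} on leading coefficients) coincides in substance with the paper's, and the remaining implications are the same easy appeals to Theorem~\ref{Th:IsotropyOverPolyFunField}, Lemma~\ref{Lemma:RousAn_SuffCond}, and Lemma~\ref{Lemma:MultiplicativeGroups}; only the cycle of implications is arranged slightly differently, which is immaterial.
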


\begin{proof}
Applying Theorem~\ref{Th:IsotropyOverPolyFunField} to each $f_k$, $1\leq k\leq r$, we get the implication (iii) $\Rightarrow$ (ii). As $g^2\in D_{F(\bX)}^*(\varphi)$, the implication (ii) $\Rightarrow$ (i) is trivial. Hence, we only need to prove (i) $\Rightarrow$ (iii).

Suppose $f\in\Tg{F(\bX)}{\varphi}$. By Lemma~\ref{Lemma:RepreOfLeadingCoeff}, $a=\lc{f}\in\Tg{F}{\varphi}$. By clearing denominators, we may assume 
\[\prod_{i=1}^m\varphi(\boldxi_i)=fh^2=af_1\cdots f_rg^2h^2\]
for some $m>0$, $\boldxi_i=(\xi_{i1},\dots,\xi_{in})\in F[\bX]^n$ and a monic polynomial $h\in F[\bX]$. Let $\gamma_i=\gcd(\xi_{i1},\dots,\xi_{in})$. Without loss of generality, we can assume $\gamma_i=1$ for all $1\leq i\leq m$; otherwise we replace $\xi_{ij}$ by $\frac{\xi_{ij}}{\gamma_i}$ and $gh$ by $\frac{gh}{\gamma_i}$. 
For any $1\leq k\leq r$, we know that $f_k$ is irreducible, and hence $f_k$ divides $\varphi(\boldxi_i)$ for some $i$; thus $\varphi(\boldxi_i)=0$ over $F(f_k)$. By the assumption above, $\boldxi_i$ is a nonzero vector over $F[\bX]/(f_k)$, and hence also over $F(f_k)$; it follows that $\varphi_{F(f_k)}$ is isotropic.
\end{proof}

\begin{remark}
Note that by the transition to more variables, we lost the upper bound for the necessary power of $D_{F(\bX)}^*(\varphi)$. But if we consider in Theorem~\ref{Th:EBF} only monic polynomials, then we can also keep the upper bounds. In particular, with the notation from the theorem, the following are equivalent for monic $f$ (i.e., $a=1$):
\begin{enumerate}
	\item $f\in D_{F(\bX)}^*(\varphi)^m$ with $m\leq \deg f$,
	\item $f_k\in D_{F(\bX)}^*(\varphi)^{m_k}$ with $m_k\leq \deg f_k$ for every $1\leq k \leq r$,
	\item $\varphi_{F(f_k)}$ is isotropic for every $1\leq k\leq r$.
\end{enumerate}
\end{remark}

%--------------------------------------------------------
\section{Function fields of quadratic forms} \label{Sec:Quadr}

In this section, we concentrate on the isotropy over a function field of a quadratic form. Most of the proofs in this section must deal with the obstacle of \uv{two types} of isotropy: Unlike in the case of characteristic other than two, the isotropy of a quadratic form $\varphi$ does not necessarily mean $\H\subseteq\varphi$, because $\varphi$ may be just defective. So an isotropic form does not have to be universal, which makes both the assertions and the proofs more elaborate.

For a start, we need to prepare some lemmas. In particular, the first lemma basically says that discarding the defect does not affect the Witt index.

\begin{lemma} \label{Lemma:TechnicalIsotropyOverFunField}
Let $\varphi$ be a semisingular and $\psi$ an arbitrary quadratic form over $F$, and let $E/F$ be a field extension. Assume that $\iw{\varphi_{F(\psi)}}>0$ and $\iw{\varphi_E}=0$. Let $\varphi'$ be a quadratic form over $E$ such that $\varphi'\cong(\varphi_E)_{\an}$. Then $\iw{\varphi'_{E(\psi)}}>0$.
\end{lemma}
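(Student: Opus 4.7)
The plan is to strip away the defect of $\varphi_E$ and then transport a hyperbolic summand from $F(\psi)$ up to $E(\psi)$. Since $\iw{\varphi_E}=0$, the Witt decomposition \eqref{Eq:WittDecomp} reads $\varphi_E\cong\varphi'\ort j\times\sqf{0}$, where $j=\iql{\varphi_E}$ and $\varphi'=(\varphi_E)_{\an}$ is anisotropic. Extending scalars to $E(\psi)$ yields
\[ \varphi_{E(\psi)}\cong\varphi'_{E(\psi)}\ort j\times\sqf{0}. \]

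The key observation will be that appending copies of $\sqf{0}$ does not affect the Witt index. Indeed, writing a Witt decomposition $\varphi'_{E(\psi)}\cong i\times\H\ort\tau_r\ort\tau_s\ort k\times\sqf{0}$ with $\tau_r\ort\tau_s$ anisotropic, I can append $j\times\sqf{0}$ to obtain
\[ \varphi_{E(\psi)}\cong i\times\H\ort\tau_r\ort\tau_s\ort(k+j)\times\sqf{0}, \]
which is again a valid Witt decomposition since $\tau_r\ort\tau_s$ is unchanged and still anisotropic. Hence $\iw{\varphi_{E(\psi)}}=\iw{\varphi'_{E(\psi)}}=i$.

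For the final step, I would use $\iw{\varphi_{F(\psi)}}>0$ to write $\varphi_{F(\psi)}\cong\H\ort\sigma$ for some quadratic form $\sigma$ over $F(\psi)$. Since $E(\psi)\supseteq F(\psi)$, scalar extension gives $\varphi_{E(\psi)}\cong\H\ort\sigma_{E(\psi)}$, so $\iw{\varphi_{E(\psi)}}\geq 1$. Combined with the previous paragraph, this yields $\iw{\varphi'_{E(\psi)}}\geq 1$, as desired. There is no single hard step; the content is just that an $\H$-summand survives both field extension and the removal of $\sqf{0}$-components. The semisingularity of $\varphi$ is not needed in the argument itself, but it is the natural hypothesis under which the statement is nonvacuous: if $\varphi$ were nonsingular then $\varphi'\cong\varphi_E$ and the claim is trivial, while if $\varphi$ were quasilinear then $\iw{\varphi_{F(\psi)}}$ would always be zero.
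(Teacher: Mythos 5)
Your proof is correct and takes essentially the same approach as the paper's: use $\iw{\varphi_E}=0$ to identify $\varphi'$ with $(\varphi_E)_{\nd}$ so that $\varphi_{E(\psi)}\cong\varphi'_{E(\psi)}\ort\iql{\varphi_E}\times\sqf{0}$, observe that appending $\sqf{0}$-summands leaves the Witt index unchanged, and conclude from $\iw{\varphi_{E(\psi)}}\geq\iw{\varphi_{F(\psi)}}>0$. The only difference is that you spell out the two small steps (invariance of $\iw{\cdot}$ under $\ort\sqf{0}$, and the monotonicity of $\iw{\cdot}$ under field extension) which the paper leaves implicit.
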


\begin{proof}
As $\iw{\varphi_E}=0$, we have $\varphi'\cong(\varphi_E)_\nd$; hence, 
\[\varphi_{E(\psi)}\cong\varphi'_{E(\psi)}\ort\iql{\varphi_E}\times\sqf{0},\] 
and it follows that $\iw{\varphi'_{E(\psi)}}=\iw{\varphi_{E(\psi)}}$. Therefore, $\iw{\varphi'_{E(\psi)}}>0$.
\end{proof}

\begin{lemma} \label{Lemma:MainPart-FormsAnisOverE}
Let $\varphi,\psi$ be nondefective quadratic forms over $F$ such that $\dim\varphi, \dim\psi\geq2$, and let $\varphi_{F(\psi)}$ be isotropic. If $E/F$ is a field extension such that both $\varphi_E$ and $\psi_E$ are anisotropic, then $D_E^*(\psi)^2\subseteq D_E^*(\varphi)^2$.
\end{lemma}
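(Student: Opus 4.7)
The plan is to verify the inclusion pointwise: given $b_1, b_2 \in D_E^*(\psi)$, I would exhibit $b_1 b_2$ as a product of two elements of $D_E^*(\varphi)$. Choose $v_1, v_2 \in E^n$ (with $n = \dim\psi$) satisfying $\psi_E(v_i) = b_i$, and form the generic-line polynomial
\[ P(T) = \psi_E(Tv_1 + v_2) = b_1 T^2 + c T + b_2 \in E[T], \qquad c = \b_{\psi_E}(v_1, v_2). \]
The point is that the quadratic extension of $E$ cut out by $P$, when $P$ is irreducible, is a field over which $\psi$ acquires a new isotropic vector $Tv_1 + v_2$, allowing the hypothesis on $\varphi_{F(\psi)}$ to be transferred into information about $\varphi$ over that extension.

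I would first dispose of the reducible case: if $P$ has a root $\tau \in E$, anisotropy of $\psi_E$ forces $v_2 = \tau v_1$, so $b_2 = \tau^2 b_1$ and $b_1 b_2 \in E^{*2} \subseteq D_E^*(\varphi)^2$ by Lemma~\ref{Lemma:MultiplicativeGroups}(ii). Otherwise set $L = E[T]/(P(T))$, a quadratic extension; the image $\tau$ of $T$ in $L$ gives $\psi_L(\tau v_1 + v_2) = 0$ with $\tau v_1 + v_2 \neq 0$ (as $\tau \notin E$ and $v_1 \neq 0$), so $\psi_L$ is isotropic. The characteristic-two dichotomy now branches: when $c = 0$ one has $L \cong E(\sqrt{b_1 b_2})$ (Kummer), and when $c \neq 0$ one has $L \cong E(\wp^{-1}(b_1 b_2/c^2))$ (Artin--Schreier). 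In either case $E(\sigma)$ is a purely transcendental extension of degree one over $L$ for the explicit 2-dimensional form $\sigma = \sqf{1, b_1 b_2}$ or $\sigma = \nsqf{1, b_1 b_2/c^2}$ respectively, so Lemma~\ref{Lemma:IsotropyTransc} gives $\psi_{E(\sigma)}$ isotropic.

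Since $\varphi_{F(\psi)}$ is isotropic, so is $\varphi_{E(\psi)}$ via the inclusion $F(\psi) \hookrightarrow E(\psi)$; combined with $\psi_{E(\sigma)}$ isotropic, Proposition~\ref{Prop:TransitionOfIsotropy} applied over $E$ yields $\varphi_{E(\sigma)}$ isotropic, and then Lemma~\ref{Lemma:IsotropyTransc} gives $\varphi_L$ isotropic. Because $\varphi_E$ is anisotropic by hypothesis, Proposition~\ref{Prop:IsotropyOverSepQuadrExt} applies: in the Kummer case it produces $c' \sqf{1, b_1 b_2} \prec \varphi$ for some $c' \in E^*$, so $c', c' b_1 b_2 \in D_E^*(\varphi)$, whence $b_1 b_2 = (c' b_1 b_2)(c')^{-1} \in D_E^*(\varphi)^2$ as $D_E^*(\varphi)$ is closed under inverses. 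In the Artin--Schreier case it produces $c' \nsqf{1, b_1 b_2/c^2} \subseteq \varphi$, giving $b_1 b_2/c^2 \in D_E^*(\varphi)^2$ by the same argument; then $b_1 b_2 \in D_E^*(\varphi)^2$ follows from $c^2 \varphi(v) = \varphi(cv)$, which ensures $c^2 D_E^*(\varphi)^2 = D_E^*(\varphi)^2$.

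The principal obstacle is the characteristic-two split between Kummer and Artin--Schreier quadratic extensions, which forces separate treatments with different auxiliary forms $\sigma$. A subtler concern is the endgame: a direct specialization via Theorem~\ref{Th:IsotropyOverPolyFunField} would only place $b_1 b_2$ in $D_E^*(\varphi)^4$, so routing through Propositions~\ref{Prop:TransitionOfIsotropy} and~\ref{Prop:IsotropyOverSepQuadrExt} — which extract a subform carrying both $c'$ and $c' b_1 b_2$ (up to a square factor $c^2$) — is essential for landing in $D_E^*(\varphi)^2$ precisely.
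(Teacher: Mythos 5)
Your proof is correct and follows essentially the same route as the paper: both reduce to a quadratic extension of $E$ carrying the isotropy of $\psi$ (splitting into the Kummer and Artin--Schreier cases), transfer isotropy of $\varphi$ there via Proposition~\ref{Prop:TransitionOfIsotropy}, and then extract the desired product from the subform produced by Proposition~\ref{Prop:IsotropyOverSepQuadrExt}. The only cosmetic difference is that the paper first normalizes by $c\in D_E^*(\psi)$ so that $1\in D_E^*(c\psi)$ and works with the subforms $\sqf{1,a}$, $\nsqf{1,s^{-2}a}$ directly, whereas you keep $b_1,b_2$ and package the same dichotomy through the polynomial $P(T)=b_1T^2+cT+b_2$; the substance is identical.
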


\begin{proof}
Let $c\in D_E^*(\psi)$; then  $1\in D_E^*(c\psi)$. Let $a\in D_E^*(c\psi)$. It suffices to show that $a\in D_E^*(\varphi)^2$.  

First, assume $a=x^2$ for some $x\in E^*$; then $a=x^2=bx^2\cdot\frac{b}{b^2}\in D_E^*(\varphi)^2$ for any $b\in D_E^*(\varphi)$. Now suppose $a\notin E^2$. Let $V_{\psi_E}$ be the underlying vector space of $\psi_E$, and let $u,v\in V_{\psi_E}$ be such that $c\psi_E(u)=1$ and $c\psi_E(v)=a$. Note that $u,v$ are linearly independent: if $v=tu$ for some $t\in E^*$, then $a=c\psi_E(v)=c\psi_E(tu)=t^2\in E^2$, a contradiction.

If $\b_{\psi_E}(u,v)=0$, then $\sqf{1,a}\prec c\psi_E$, and hence $\psi_{E(\sqrt{a})}$ is isotropic. By Proposition~\ref{Prop:TransitionOfIsotropy}%\todo{To use this, I need $\psi_E$ to be anisotropic!}
, $\varphi_{E(\sqrt{a})}$ is also isotropic, and by Proposition~\ref{Prop:IsotropyOverQuadrExt} we have $b\sqf{1,a}\prec\varphi_E$ for some $b\in E^*$. In particular, $b,ba\in D_E^*(\varphi)$, and hence $a=ba\cdot\frac{b}{b^2}\in D_E^*(\varphi)^2$.

Let $\b_{\psi_E}(u,v)\neq0$. Set $s=\b_{\psi_E}(u,v)$; then $\nsqf{1,s^{-2}a}\subseteq c\psi_E$. As $\psi_E$ is anisotropic by the assumption, it follows that $s^{-2}a\notin\wp(E)$. So, considering $\alpha=\wp^{-1}(s^{-2}a)\in\overline{F}$ (where $\overline{F}$ denotes an algebraic closure of $F$), we get $[E(\alpha):E]=2$ and $\psi_{E(\alpha)}$ is isotropic. Similarly as above, it follows that $\varphi_{E(\alpha)}$ is isotropic by Proposition~\ref{Prop:TransitionOfIsotropy}. Invoking Proposition~\ref{Prop:IsotropyOverSepQuadrExt}, we get $b\nsqf{1,s^{-2}a}\subseteq\varphi_E$ for some $b\in D_E^*(\varphi)$. In particular, $b,bs^{-2}a\in D_E^*(\varphi)$, hence $a=\frac{ab}{s^2}\cdot\frac{bs^2}{b^2}\in D_E^*(\varphi)^2$.
\end{proof}

\begin{proposition}\label{Prop:RousAn_NecCond}
Let $\varphi,\psi$ be nondefective quadratic forms over $F$. Set
\[
\mE=\begin{cases}
\{E~|~E/F \text{ an extension s.t. } \iql{\varphi_E}=0\} & \text{ if } \psi \text{ quasilinear,}\\
\{E~|~E/F \text{ an extension}\} & \text{ otherwise.}
\end{cases}
\]
If $\varphi_{F(\psi)}$ is isotropic, then $D_E^*(\psi)^2\subseteq D_E^*(\varphi)^2$ for every $E\in\mE$.
\end{proposition}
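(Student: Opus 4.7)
The plan is to reduce the statement to Lemma~\ref{Lemma:MainPart-FormsAnisOverE} applied over $E$ with the anisotropic parts of $\varphi_E$ and $\psi_E$. Fix $E\in\mE$. First I would dispose of the degenerate dimensions: if $\dim\psi\leq 1$, then $\psi$ is a one-dimensional quasilinear form, so $E\in\mE$ forces $\iql{\varphi_E}=0$, and since $F(\psi)=F$, isotropy of $\varphi$ combined with its nondefectivity gives $\H\subseteq\varphi$, whence $D_E^*(\varphi)=E^*$. The case $\dim\varphi\leq 1$ is excluded, since a one-dimensional nondefective form is anisotropic quasilinear and stays so on every extension.

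Assume $\dim\varphi,\dim\psi\geq 2$ and set $\varphi'=(\varphi_E)_{\an}$, $\psi'=(\psi_E)_{\an}$. If $\iw{\varphi_E}>0$ then $\H\subseteq\varphi_E$ and $D_E^*(\varphi)=E^*$, giving the inclusion trivially. Otherwise I would argue $\iw{\psi_E}=0$ by contradiction: if $\iw{\psi_E}>0$ then $\psi$ is not quasilinear, so by Lemma~\ref{Lemma:IsotropySQFoverTSQF} applied to $\ql{\varphi}$ the form $\varphi_{F(\psi)}$ is nondefective with $\iw{\varphi_{F(\psi)}}>0$; moreover $(\psi_E)_{\nd}$ is nondefective and isotropic, so by Lemmas~\ref{Lemma:DefectTransc} and \ref{Lemma:FunFieldTransc} the extension $E(\psi)/E$ is purely transcendental, and Witt-index preservation under purely transcendental extensions (an immediate consequence of Lemma~\ref{Lemma:IsotropyTransc} applied to the anisotropic part) combined with Witt-index monotonicity under base change gives $\iw{\varphi_E}=\iw{\varphi_{E(\psi)}}\geq\iw{\varphi_{F(\psi)}}>0$, a contradiction. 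Hence both $\iw{\varphi_E}=\iw{\psi_E}=0$, the forms $\varphi'$, $\psi'$ are anisotropic, and $D_E^*(\varphi)=D_E^*(\varphi')$, $D_E^*(\psi)=D_E^*(\psi')$. If $\dim\psi'\leq 1$, then either $D_E^*(\psi)=\emptyset$ or $D_E^*(\psi)^2=E^{*2}\subseteq D_E^*(\varphi)^2$ by Lemma~\ref{Lemma:MultiplicativeGroups}, and we are done.

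The remaining core case is $\dim\varphi',\dim\psi'\geq 2$, where the goal is to show $\varphi'_{E(\psi')}$ is isotropic; then applying Lemma~\ref{Lemma:MainPart-FormsAnisOverE} over $E$ with the trivial extension $E/E$ yields $D_E^*(\psi')^2\subseteq D_E^*(\varphi')^2$, which suffices. When $\psi$ is not quasilinear, Lemma~\ref{Lemma:IsotropySQFoverTSQF} again forces $\varphi_{F(\psi)}$ to be nondefective, so $\iw{\varphi_{F(\psi)}}>0$; base-changing gives $\iw{\varphi_{E(\psi)}}>0$, and passing through the purely transcendental extension $E(\psi)/E(\psi')$ (Lemma~\ref{Lemma:DefectTransc}) preserves this, yielding $\iw{\varphi_{E(\psi')}}>0$; since the $\sqf{0}$-summands in $\varphi_E\cong\varphi'\ort\iql{\varphi_E}\times\sqf{0}$ cannot carry a hyperbolic plane, the Witt index sits in $\varphi'$, so $\iw{\varphi'_{E(\psi')}}>0$. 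When $\psi$ is quasilinear, $E\in\mE$ forces $\iql{\varphi_E}=0$ so $\varphi'=\varphi_E$; the natural embedding $F(\psi)\hookrightarrow E(\psi)$ (well-defined because $\dim\psi'\geq 2$ keeps $\psi$ irreducible over $E$) transports the isotropy to $E(\psi)$, and pure transcendentality of $E(\psi)/E(\psi')$ gives $\varphi'_{E(\psi')}=\varphi_{E(\psi')}$ isotropic.

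The main obstacle will be the bookkeeping around defects: tracking the behaviour of $\iw$ and $\iql$ under base change and under function-field extensions, and carefully separating the quasilinear case of $\psi$, which is precisely what the shape of $\mE$ is designed to handle. A subsidiary technical point is verifying that Witt index is preserved under purely transcendental extensions, which is an easy consequence of Lemma~\ref{Lemma:IsotropyTransc} applied to the Witt decomposition.
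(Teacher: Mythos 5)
Your proof is correct, and it reaches the same endpoint as the paper's (namely an application of Lemma~\ref{Lemma:MainPart-FormsAnisOverE}), but via a noticeably different route. The paper first uses \cite[Lemma~2.2]{HL04} to produce a subform $\psi'\subseteq\psi$ \emph{over $F$} with $\psi'_E\cong(\psi_E)_\nd$, then invokes Proposition~\ref{Prop:TransitionOfIsotropy} to transfer isotropy from $F(\psi)$ to $F(\psi')$, and finally applies Lemma~\ref{Lemma:MainPart-FormsAnisOverE} with the extension $E/F$. You instead work directly with $\psi'=(\psi_E)_\an$ \emph{over $E$}, avoiding \cite[Lemma~2.2]{HL04} and Proposition~\ref{Prop:TransitionOfIsotropy} altogether, at the cost of carefully pushing the Witt index through the tower $F(\psi)\hookrightarrow E(\psi)$ and down $E(\psi)/E(\psi')$, and then applying Lemma~\ref{Lemma:MainPart-FormsAnisOverE} over $E$ with the trivial extension $E/E$. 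You also implicitly reprove the content of Lemma~\ref{Lemma:TechnicalIsotropyOverFunField} when arguing that the $\sqf{0}$-summands of $\varphi_E$ cannot carry a hyperbolic plane. Your route is more self-contained (fewer external citations) but requires more detailed bookkeeping of $\iw$ under base change and under purely transcendental extensions; the paper's route trades that bookkeeping for an extra structural input.

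Two small points you should make explicit to tighten the write-up: (1) that $\dim\varphi'\geq 2$ in the core case (this follows because if $\psi$ is not quasilinear then $\iw{\varphi_{F(\psi)}}>0$ forces $\varphi$ to have a nonsingular part, which survives base change and cannot sit in a $\sqf{0}$-summand when $\iw{\varphi_E}=0$; and if $\psi$ is quasilinear then $\varphi'=\varphi_E$ has $\dim\varphi'=\dim\varphi\geq 2$ directly); (2) that $\psi$ remains irreducible over $E$ in the core case (you note this for $\psi$ quasilinear; for $\psi$ not quasilinear it follows from $\iw{\psi_E}=0$, which rules out $(\psi_E)_\nd\cong\H$, while $(\psi_E)_\nd\cong\sqf{a}$ would force $\psi$ quasilinear). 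Neither is a genuine gap since both facts are automatic, but they are used silently when you form $E(\psi)$ and apply Lemma~\ref{Lemma:MainPart-FormsAnisOverE}.
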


\begin{proof}
 We will prove the proposition in several steps, starting with some trivial cases.

\smallskip

(0) Let $E_0/F$ be a field extension such that $\iw{\varphi_{E_0}}>0$, i.e., $\H_{E_0}\subseteq\varphi_{E_0}$. Then $D_{E_0}^*(\varphi)=E_0^*$, and trivially, $D_{E_0}^*(\psi)^2\subseteq D_{E_0}^*(\varphi)^2$.

\smallskip 

(1) Since a nondefective one-dimensional quadratic form cannot become isotropic, we may assume $\dim\varphi\geq2$.  

If $\dim\psi=1$, then $F(\psi)=F$. If $\psi$ is isotropic (but nondefective by the assumption), then the field extension $F(\psi)/F$ is purely transcendental by Lemma~\ref{Lemma:FunFieldTransc}. In both cases we get that  $\varphi$ is isotropic over $F$ (in the latter case by Lemma~\ref{Lemma:IsotropyTransc}). 

If $\varphi$ is isotropic over $F$, then (since it is nondefective), $\H\subseteq \varphi$. Then also $\H_E\subseteq \varphi_E$ for any field extension $E/F$, and we are done by (0) with $E_0=E$.

\smallskip 

(2) Suppose that $\varphi$, $\psi$ are anisotropic, and let $E\in\mE$. We claim that there exists a form $\psi'\subseteq\psi$ (over $F$) such that $\psi'_E\cong(\psi_E)_\nd$: Write $\psi\cong\psi_r\ort\ql{\psi}$. Then, by \cite[Lemma~2.2]{HL04}, there exists an anisotropic form $\sigma\subseteq\ql{\psi}$ over $F$ such that $\sigma_E\cong(\ql{\psi}_E)_\an$. Therefore, 
\[(\psi_r\ort\sigma)_E\cong (\psi_r)_E\ort(\ql{\psi}_E)_\an\cong(\psi_E)_\nd.\] 
Setting $\psi'=\psi_r\ort\sigma$, the claim follows. Furthermore, note that we have $D_E^*(\psi)=D_E^*(\psi')$. 

If $\dim\psi'=1$, then $D_E^*(\psi')^2=E^{*2}\subseteq D_E^*(\varphi)^2$, and thus the inclusion $D_E^*(\psi)^2\subseteq D_E^*(\varphi)^2$ holds for trivial reasons.

Suppose $\dim\psi'\geq2$; then $\psi'\subseteq\psi$ implies that $\psi_{F(\psi')}$ is isotropic. Together with the assumptions that $\varphi_{F(\psi)}$ is isotropic and $\psi$ is nondefective, we get that $\varphi_{F(\psi')}$ is isotropic by Proposition~\ref{Prop:TransitionOfIsotropy}.

\smallskip

(3) Now fix a field $E\in\mE$. By (1) we can assume that $\varphi,\psi$ are anisotropic over $F$ and $\dim\varphi,\dim\psi\geq2$. Invoking (2) we can suppose (by replacing $\psi$ with $\psi'$) that $\psi_E$ is nondefective.  We treat different kinds of quadratic forms separately.

\smallskip

First assume that $\varphi_E$ is anisotropic. If $\psi_E$ is isotropic, then (since we assume $\psi_E$ to be nondefective), the extension $E(\psi)/E$ is purely transcendental by Lemma~\ref{Lemma:FunFieldTransc}, and so, by Lemma~\ref{Lemma:IsotropyTransc}, the isotropy of $\varphi_{E(\psi)}$ implies the isotropy of $\varphi_E$, a contradiction. Therefore, $\psi_E$ must be anisotropic in this case, and the claim follows from Lemma~\ref{Lemma:MainPart-FormsAnisOverE}.

\smallskip

Now suppose $\varphi_E$ is isotropic. If $\iw{\varphi_E}>0$, then we are done by (0). Hence, assume $\iti{\varphi_E}=\iql{\varphi_E}>0$. Note that $\iql{\varphi_{F(\psi)}}>0$ means that $\ql{\varphi}_{F(\psi)}$ is isotropic, which is possible only if $\psi$ is quasilinear by Lemma~\ref{Lemma:IsotropySQFoverTSQF}; but in that case we have $\iql{\varphi_E}=0$ by the assumption. Therefore, it must be   $\iql{\varphi_{F(\psi)}}=0$, and hence $\iw{\varphi_{F(\psi)}}>0$. If $\psi_E$ were isotropic, then (since $\psi_E$ is nondefective) $E(\psi)/E$ were a purely transcendental extension by Lemma~\ref{Lemma:FunFieldTransc}, and hence, by Lemma~\ref{Lemma:IsotropyTransc},
\[0=\iw{\varphi_E}=\iw{\varphi_{E(\psi)}}\geq\iw{\varphi_{F(\psi)}}>0,\]
a contradiction; therefore, $\psi_E$ must be anisotropic. Moreover, note that the assumptions $\iql{\varphi_E}>0$ and $\iw{\varphi_{F(\psi)}}>0$ imply that $\varphi$ is semisingular. Set $\varphi'=(\varphi_E)_{\an}$; by Lemma~\ref{Lemma:TechnicalIsotropyOverFunField}, we have $\iw{\varphi'_{E(\psi)}}>0$. Applying Lemma~\ref{Lemma:MainPart-FormsAnisOverE} to the forms $\varphi'$ and $\psi$, we get $D_E^*(\psi)^2\subseteq D_E^*(\varphi')^2$. Since $D_E^*(\varphi')=D_E^*(\varphi)$, the claim follows.
\end{proof}

Combining Proposition~\ref{Prop:RousAn_NecCond}, Lemma~\ref{Lemma:MultiplicativeGroups} and Lemma~\ref{Lemma:RousAn_SuffCond} applied to the case of quadratic forms, we get a full characterisation of the isotropy of $\varphi_{F(\psi)}$.

\begin{theorem}
\label{Th:CharacterizationIsotropyOverFunField}
Let $\varphi$, $\psi$ be nondefective quadratic forms over $F$, and suppose that $\dim\psi\geq2$. %$\dim\psi>1$. 
Denote $\bX=(X_1,\dots, X_{\dim\psi})$, and set
\[
\mE=\begin{cases}
\{E~|~E/F \text{ an extension s.t. } \iql{\varphi_E}=0\} & \text{ if } \psi \text{ quasilinear,}\\
\{E~|~E/F \text{ an extension}\} & \text{ otherwise.}
\end{cases}
\]
Then the following assertions are equivalent:
\begin{enumerate}
	\item $\varphi_{F(\psi)}$ is isotropic, 
	\item $D_E^*(\psi)^2\subseteq D_E^*(\varphi)^2$ for every $E\in\mE$,
	\item $a\psi(\bX)\in D_{F(\bX)}^*(\varphi)^2$ for every $a\in D_F^*(\psi)$,
	\item $\Ng{E}{\psi}\subseteq \Ng{E}{\varphi}$ for every $E\in\mE$,
	\item $a\psi(\bX)\in \Ng{F(\bX)}{\varphi}$ for every $a\in D_F^*(\psi)$,
	\item $\Tg{E}{a\psi}\subseteq \Tg{E}{\varphi}$ for every $E\in\mE$ and every $a\in D_F^*(\psi)$, 
	\item $a\psi(\bX)\in \Tg{F(\bX)}{\varphi}$ for every $a\in D_F^*(\psi)$.
\end{enumerate}
\end{theorem}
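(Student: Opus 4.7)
The plan is to organize the seven assertions into two tiers---three ``field-wise'' statements (ii), (iv), (vi) varying over $\mE$, and three ``pointwise'' statements (iii), (v), (vii) at $E = F(\bX)$---and to cycle through them. The hard direction (i)$\Rightarrow$(ii) is already available as Proposition~\ref{Prop:RousAn_NecCond}; the remaining work is to chase the algebraic chain $D_E^*(\cdot)^2 \subseteq \Ng{E}{\cdot} \subseteq \Tg{E}{\cdot}$ provided by Lemma~\ref{Lemma:MultiplicativeGroups} and to invoke the sufficient condition of Lemma~\ref{Lemma:RousAn_SuffCond} to close the loop from (vii) back to (i).

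The implications (ii)$\Rightarrow$(iv)$\Rightarrow$(vi) use only generators and Lemma~\ref{Lemma:MultiplicativeGroups}. Every generator of $\Ng{E}{\psi}$ lies in $D_E^*(\psi)^2$, which by (ii) is contained in $D_E^*(\varphi)^2 \subseteq \Ng{E}{\varphi}$. For (iv)$\Rightarrow$(vi) I exploit part (iii) of that lemma: since $a \in D_F^*(\psi) \subseteq D_E^*(\psi)$, one has $\Tg{E}{a\psi} = \Ng{E}{\psi}$, which by (iv) sits inside $\Ng{E}{\varphi} \subseteq \Tg{E}{\varphi}$. To pass from a field-wise statement to its pointwise counterpart I specialize to $E = F(\bX)$; this lies in $\mE$ because $F(\bX)/F$ is purely transcendental, so if $\psi$ is quasilinear then $\iql{\varphi_{F(\bX)}} = \iql{\varphi} = 0$ by Lemma~\ref{Lemma:IsotropyTransc} applied to $\ql{\varphi}$. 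Since $a\psi(\bX) \in D_{F(\bX)}^*(\psi)^2$, the specializations (ii)$\Rightarrow$(iii), (iv)$\Rightarrow$(v) and (vi)$\Rightarrow$(vii) are then immediate, and the pointwise chain (iii)$\Rightarrow$(v)$\Rightarrow$(vii) is again just Lemma~\ref{Lemma:MultiplicativeGroups}(ii).

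The main subtlety lies in (vii)$\Rightarrow$(i). Under the generic assumption $\psi \not\cong \H$, the reducibility criterion recalled after Lemma~\ref{Lemma:DefectTransc} ensures that $\psi(\bX)$ is an irreducible polynomial whose function field is exactly $F(\psi)$, and Lemma~\ref{Lemma:RousAn_SuffCond} applied with $f = \psi(\bX)$ immediately yields isotropy of $\varphi_{F(\psi)}$. The degenerate case $\psi \cong \H$ has to be treated separately: then $F(\psi) = F$ and $\psi(\bX) = X_1 X_2$ is reducible, so Lemma~\ref{Lemma:RousAn_SuffCond} does not apply directly. In that situation I would instead invoke Theorem~\ref{Th:EBF} on the factorization $\psi(\bX) = X_1 \cdot X_2$ with $a = 1$, which yields isotropy of $\varphi_{F(X_1)}$, whence $\varphi$ is isotropic over $F = F(\psi)$ by Lemma~\ref{Lemma:IsotropyTransc} applied to the purely transcendental extension $F(X_1)/F$.
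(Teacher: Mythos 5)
Your proof follows essentially the same route as the paper: Proposition~\ref{Prop:RousAn_NecCond} gives (i)$\Rightarrow$(ii), Lemma~\ref{Lemma:MultiplicativeGroups} chases the algebraic chain among (ii)--(vii) with specialization to $E=F(\bX)$ for the pointwise statements, and Lemma~\ref{Lemma:RousAn_SuffCond} (with Theorem~\ref{Th:EBF} handling the degenerate case $\psi\cong\H$) closes the cycle back to (i). One minor notational slip: in the $\psi\cong\H$ case, Theorem~\ref{Th:EBF} yields isotropy of $\varphi$ over the function field $F(f_1)$ with $f_1=X_1\in F[X_1,X_2]$, which is $F(X_2)$ rather than $F(X_1)$; since this is still a purely transcendental extension of $F$, your appeal to Lemma~\ref{Lemma:IsotropyTransc} is correct and the argument stands.
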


\begin{proof}
(i)~$\Rightarrow$~(ii) is covered by Proposition~\ref{Prop:RousAn_NecCond}. The implications (ii)~$\Rightarrow$~(iii), (iv)~$\Rightarrow$~(v) and (vi)~$\Rightarrow$~(vii) follow trivially by setting $E=F(\bX)$; note that $F(\bX)\in\mE$ by Lemma~\ref{Lemma:IsotropyTransc}. By Lemma~\ref{Lemma:MultiplicativeGroups}, we get (iii)~$\Rightarrow$~(v)~$\Rightarrow$~(vii).

To prove (ii)~$\Rightarrow$~(iv), let $x\in \Ng{E}{\psi}$. Then $x=\prod_{i=1}^ka_ib_i$ for some $a_i,b_i\in D_E^*(\psi)$. Since $a_ib_i\in D_E^*(\varphi)^2$ by the assumption of (ii), we get $x\in\Ng{E}{\psi}$.

By combining the assumption of (iv) with Lemma~\ref{Lemma:MultiplicativeGroups}, we obtain that $\Tg{E}{a\psi}=\Ng{E}{\psi}\subseteq\Ng{E}{\varphi}\subseteq\Tg{E}{\varphi}$; therefore, (iv)~$\Rightarrow$~(vi).

Finally, Lemma~\ref{Lemma:RousAn_SuffCond} gives (vii)~$\Rightarrow$~(i) for all but one possible $\psi$; if $\psi\cong\H$, then the polynomial $\psi(\bX)$ is reducible. But by assuming (vii), we have $X_1X_2\in\Tg{F(X_1,X_2)}{\varphi}$, and it follows by Theorem~\ref{Th:EBF} that $\varphi_{F}$ must be isotropic (because for $f_i=X_i$, we have $F(f_i)\simeq F$). Since $F(\H)=F$ by definition, the claim follows.
\end{proof}

\begin{remark}
The assumption that $\dim\psi\geq2$ is only necessary for the proof of (vii) $\Rightarrow$ (i), but it is crucial there: Let $\psi\cong\sqf{1}$ and $\varphi\cong\nsqf{1,b}$ for some $b\in F\setminus\wp(F)$; then $\varphi$ is anisotropic over $F$. We have $a^2X_1^2\cdot1\in \Tg{F(X_1)}{\varphi}$ for any $a\in F^*$, i.e., (vii) is fulfilled. But $F(\psi)=F$, so (i) does not hold.  
\end{remark}

In the proof of Theorem~\ref{Th:CharacterizationIsotropyOverFunField}, to show (ii) $\Rightarrow$ (i), we used the chain of implications (ii) $\Rightarrow$ (iii) $\Rightarrow$ (v) $\Rightarrow$ (vii) $\Rightarrow$ (i), in which we needed only the field $F(\bX)$. Moreover, we could as well use any field $F(X_1,\dots,X_n)$ with $n\geq\dim\psi$. Hence, we have actually proved the following:

\begin{corollary} \label{Cor:CharacterizationIsotropyOverFunField_Simplified}
Let $\varphi,\psi$ be nondefective quadratic forms with $\dim\psi\geq2$, and let $\bY=(Y_1,\dots,Y_n)$ with $n\geq\dim\psi$. Then the following are equivalent:
\begin{enumerate}
	\item $\varphi_{F(\psi)}$ is isotropic,
	\item $D_{F(\bY)}^*(\psi)^2\subseteq D_{F(\bY)}^*(\varphi)^2$.
\end{enumerate}
\end{corollary}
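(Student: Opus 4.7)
The plan is to regard this corollary as the special case of Theorem~\ref{Th:CharacterizationIsotropyOverFunField} in which the whole family $\mE$ is collapsed to the single field $F(\bY)$. The observation that drives everything is that passing between $F(Y_1,\dots,Y_{\dim\psi})$ and $F(\bY)$ is a purely transcendental extension, so both isotropy (Lemma~\ref{Lemma:IsotropyTransc}) and nondefectiveness are preserved, and the chain of implications already used in the proof of the theorem transports verbatim to $\bY$.

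For (i) $\Rightarrow$ (ii) I would simply invoke Proposition~\ref{Prop:RousAn_NecCond} with $E = F(\bY)$. The only point requiring a line of verification is $F(\bY) \in \mE$: if $\psi$ is not quasilinear this is automatic, while if $\psi$ is quasilinear we need $\iql{\varphi_{F(\bY)}} = 0$, which follows from $\iql{\varphi} = 0$ together with the preservation of nondefectiveness under the purely transcendental extension $F(\bY)/F$ (apply Lemma~\ref{Lemma:IsotropyTransc} to the anisotropic part of $\varphi$, which is anisotropic precisely because $\varphi$ is nondefective).

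For (ii) $\Rightarrow$ (i) I would replicate the chain (iii) $\Rightarrow$ (v) $\Rightarrow$ (vii) $\Rightarrow$ (i) from the proof of Theorem~\ref{Th:CharacterizationIsotropyOverFunField}, with $F(\bY)$ throughout. Setting $\bX = (Y_1,\dots,Y_{\dim\psi})$, every $a \in D_F^*(\psi)$ gives $a\psi(\bX) \in D_{F(\bY)}^*(\psi)^2$; assumption (ii) places this element in $D_{F(\bY)}^*(\varphi)^2$, and Lemma~\ref{Lemma:MultiplicativeGroups} pushes it into $\Tg{F(\bY)}{\varphi}$. When $\psi \not\cong \H$ the polynomial $\psi(\bX)$ is irreducible in $F[\bX]$ by the criterion recalled in Section~\ref{Sec:Prel}, hence also in the larger polynomial ring $F[\bY]$; then Lemma~\ref{Lemma:RousAn_SuffCond} gives $\varphi$ isotropic over $F(\psi(\bX))$, and since this field is a purely transcendental extension of $F(\psi)$ by the $n-\dim\psi$ variables $Y_{\dim\psi+1},\dots,Y_n$, a final application of Lemma~\ref{Lemma:IsotropyTransc} concludes. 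The residual case $\psi \cong \H$ is handled as in the theorem's proof: (ii) supplies $Y_1 Y_2 \in \Tg{F(\bY)}{\varphi}$, and Theorem~\ref{Th:EBF} applied to the factorisation $Y_1 \cdot Y_2$ forces $\varphi$ to be isotropic over $F = F(\H)$.

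I foresee no genuine obstacle; this corollary is essentially bookkeeping on top of Theorem~\ref{Th:CharacterizationIsotropyOverFunField}. The only small checks are the irreducibility of $\psi(\bX)$ after adjoining the extra variables and the isolated treatment of the hyperbolic case, neither of which requires any new idea beyond what is already used in the theorem.
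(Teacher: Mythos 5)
Your proof is correct and follows exactly the route the paper takes: invoke Proposition~\ref{Prop:RousAn_NecCond} at $E=F(\bY)$ for (i)~$\Rightarrow$~(ii), and re-run the chain (ii)~$\Rightarrow$~(iii)~$\Rightarrow$~(v)~$\Rightarrow$~(vii)~$\Rightarrow$~(i) from the proof of Theorem~\ref{Th:CharacterizationIsotropyOverFunField} with $F(\bY)$ in place of $F(\bX)$, exactly as the paper remarks. The extra checks you flag (irreducibility of $\psi(\bX)$ after adjoining spare variables, the purely transcendental step back to $F(\psi)$, and the separate $\psi\cong\H$ case via Theorem~\ref{Th:EBF}) are precisely the bookkeeping the paper relies on, and your verification of $F(\bY)\in\mE$ is sound.
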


Let $\varphi$, $\psi$ be quadratic forms over $F$. We can ask when both $\varphi_{F(\psi)}$ and $\psi_{F(\varphi)}$ are isotropic, i.e., when
\begin{equation}\label{Eq:StbCond}\tag{$\ast$}
\iti{\varphi_{F(\psi)}}>0 \quad \& \quad \iti{\psi_{F(\varphi)}}>0.
\end{equation}
If the forms $\varphi$ and $\psi$ are nondefective, then \eqref{Eq:StbCond} is an equivalent condition to $\varphi$ and $\psi$ being \emph{stably birationally equivalent}, which we denote $\varphi\simstb\psi$. Theorem~\ref{Th:CharacterizationIsotropyOverFunField} together with Corollary~\ref{Cor:CharacterizationIsotropyOverFunField_Simplified} give a characterization of this phenomenon.

\begin{corollary} \label{Cor:CharSTB}
Let $\varphi$ and $\psi$ be nondefective quadratic forms of dimension at least two, and let $\bY=(Y_1,\dots,Y_n)$ with $n\geq\max\{\dim\varphi, \dim\psi\}$. Moreover, set 
\[
\mE=\begin{cases}
\{E~|~E/F \text{ an extension}\} \quad \text{ if neither $\varphi$ nor $\psi$ is quasilinear,}\\
\{E~|~E/F \text{ an extension s.t. } \iql{\varphi_E}=\iql{\psi_E}=0\}  \quad \text{ otherwise.}
\end{cases}
\]

Then the following are equivalent:
\begin{enumerate}[(a)]
	\item $\varphi\simstb\psi$,
	\item $D_E^*(\psi)^2= D_E^*(\varphi)^2$ for every $E\in\mE$,
	\item $\Ng{E}{\psi}= \Ng{E}{\varphi}$ for every $E\in\mE$,
	\item $D_{F(\bY)}^*(\psi)^2=D_{F(\bY)}^*(\varphi)^2$.
\end{enumerate}
If, moreover, $1\in D_F^*(\varphi)\cap D_F^*(\psi)$, then the above are also equivalent to
\begin{enumerate}[(a)] \setcounter{enumi}{4}
	\item $\Tg{E}{\psi}= \Tg{E}{\varphi}$ for every $E\in\mE$.
\end{enumerate}
\end{corollary}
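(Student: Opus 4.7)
The plan is to derive the equivalences in Corollary~\ref{Cor:CharSTB} by applying Theorem~\ref{Th:CharacterizationIsotropyOverFunField} and Corollary~\ref{Cor:CharacterizationIsotropyOverFunField_Simplified} twice: once for the isotropy of $\varphi_{F(\psi)}$ (treating $\psi$ as the ``denominator'') and once for the isotropy of $\psi_{F(\varphi)}$, then combining the two one-sided inclusions each produces into the two-sided equalities (b)--(e). A preliminary observation that will be used repeatedly is that, since $F(\bY)/F$ is purely transcendental and $\varphi$, $\psi$ are nondefective, Lemma~\ref{Lemma:IsotropyTransc} forces $\iql{\varphi_{F(\bY)}}=\iql{\psi_{F(\bY)}}=0$; hence $F(\bY)\in\mE$ in all of the cases of the definition of $\mE$. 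The same argument applied to $\bX=(X_1,\dots,X_{\dim\psi})$ and $\bX'=(X_1,\dots,X_{\dim\varphi})$ shows $F(\bX),F(\bX')\in\mE$ as well.

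The equivalence (a)~$\Leftrightarrow$~(d) is the most immediate: since $n\geq\dim\psi$ and $n\geq\dim\varphi$, Corollary~\ref{Cor:CharacterizationIsotropyOverFunField_Simplified} identifies each one-sided inclusion in (d) with the isotropy of one of the two forms over the function field of the other, and so the combined equality (d) is exactly $\varphi\simstb\psi$.

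For (a)~$\Rightarrow$~(b) I would apply the implication (i)~$\Rightarrow$~(ii) of Theorem~\ref{Th:CharacterizationIsotropyOverFunField} in both directions; a short case analysis (according to which, if any, of $\varphi$ and $\psi$ is quasilinear) shows that the $\mE$ defined in the corollary is contained in both of the sets of admissible extensions from the theorem, so the two one-sided inclusions hold on every $E\in\mE$ and together produce the equality (b). The implication (b)~$\Rightarrow$~(d) is then immediate because $F(\bY)\in\mE$, and (b)~$\Rightarrow$~(c) is trivial since $\Ng{E}{\cdot}$ is by definition the subgroup generated by $D_E^*(\cdot)^2$. For (c)~$\Rightarrow$~(a), I would apply (c) at the single field $E=F(\bX)\in\mE$: then for each $a\in D_F^*(\psi)$ the element $a\psi(\bX)$ lies in $D_{F(\bX)}^*(\psi)^2\subseteq\Ng{F(\bX)}{\psi}=\Ng{F(\bX)}{\varphi}$, which is condition (v) of Theorem~\ref{Th:CharacterizationIsotropyOverFunField} and hence gives $\varphi_{F(\psi)}$ isotropic; a symmetric argument (swapping $\varphi\leftrightarrow\psi$ and $\bX\leftrightarrow\bX'$) yields $\psi_{F(\varphi)}$ isotropic.

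Finally, for the addendum under the hypothesis $1\in D_F^*(\varphi)\cap D_F^*(\psi)$, one has $1\in D_E^*(\varphi)\cap D_E^*(\psi)$ for every extension $E$, so Lemma~\ref{Lemma:MultiplicativeGroups}(iii) applied with $c=1$ gives $\Ng{E}{\varphi}=\Tg{E}{\varphi}$ and $\Ng{E}{\psi}=\Tg{E}{\psi}$, which converts condition (c) into condition (e). I do not foresee any serious obstacle; the only bookkeeping point is the case analysis verifying that the corollary's $\mE$ sits inside both of the analogous sets that appear in Theorem~\ref{Th:CharacterizationIsotropyOverFunField} for the two applications, which is completely routine.
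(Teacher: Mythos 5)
Your proposal is correct and follows essentially the same route as the paper: both derive (a)~$\Leftrightarrow$~(b)~$\Leftrightarrow$~(c)~$\Leftrightarrow$~(d) by applying Theorem~\ref{Th:CharacterizationIsotropyOverFunField} and Corollary~\ref{Cor:CharacterizationIsotropyOverFunField_Simplified} symmetrically in $\varphi$ and $\psi$, after checking that the corollary's $\mE$ sits inside the theorem's in each case, and both go through $E=F(\bX)$ for the closing implication (c)~$\Rightarrow$~(a). Your handling of (e) is a small but genuine simplification: rather than reinspecting the proof of (vi)~$\Rightarrow$~(vii)~$\Rightarrow$~(i) in the theorem (as the paper does), you use $1\in D_E^*(\varphi)\cap D_E^*(\psi)$ together with Lemma~\ref{Lemma:MultiplicativeGroups}(iii) to identify $\Tg{E}{\varphi}=\Ng{E}{\varphi}$ and $\Tg{E}{\psi}=\Ng{E}{\psi}$, reducing (e) directly to (c).
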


\begin{proof}
The proof can basically be obtained through two-sided applications of Theorem~\ref{Th:CharacterizationIsotropyOverFunField} and Corollary~\ref{Cor:CharacterizationIsotropyOverFunField_Simplified}, but note that our current $\mE$ is slightly different from the one in that theorem.

The implication (a) $\Rightarrow$ (b) follows directly from the theorem, and (b) $\Rightarrow$ (c) can be done exactly as the proof of (ii) $\Rightarrow$ (iv) of the theorem. For (c) $\Rightarrow$ (a), note that to prove (iv) $\Rightarrow$ (i) in the theorem, we have  actually shown (iv) $\Rightarrow$ (v) $\Rightarrow$ (vii) $\Rightarrow$ (i), and we only used the field $F(\bX)$, which is an element of (our current) $\mE$; therefore, (c) $\Rightarrow$ (a) holds.

The implication (b) $\Rightarrow$ (d) is obvious, as $F(\bY)\in\mE$. Furthermore, (d) $\Rightarrow$ (a) follows directly from the corollary.

Finally, (a) $\Leftrightarrow$ (e) follows from the theorem if we note that in the proof of (vi) $\Rightarrow$ (vii) $\Rightarrow$ (i), only the existence of one $a\in D_F^*(\psi)$ with the required property was necessary.
\end{proof}

%--------------------------------------------------------
\section{Sums and Pfister multiples of quadratic forms} \label{Sec:Pfister}

The goal of this section is to extend the results from the previous one by comparing the isotropy of $\varphi_{F(\psi)}$ with the isotropy of $(\pi\otimes\varphi)_{F(\pi\otimes\psi)}$ for a bilinear Pfister form $\pi$.

\begin{lemma} \label{Lemma:AnisotropyPurelyInsepAndFunField}
Let $\varphi_0$, $\varphi_1$ and $\tau$ be quadratic forms over $F$ with $\varphi_0$ and $\varphi_1$ nondefective, and let $\varphi=\varphi_0\ort X \varphi_1$ over $F(X)$. Then $\iw{\tau_{F(X)(\varphi)}}=\iw{\tau}$ and $\iql{\tau_{F(X)(\varphi)}}=\iql{\tau}$. In particular, if $\tau$ is nondefective, then $\tau_{F(X)(\varphi)}$ is also nondefective.
\end{lemma}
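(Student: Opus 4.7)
The plan is to exhibit $F(X)(\varphi)$ as a purely transcendental extension of $F$, so that Lemma~\ref{Lemma:IsotropyTransc} applied to the Witt decomposition of $\tau$ immediately yields both equalities.

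Suppose first that $\varphi_0, \varphi_1$ are both nonzero and $\varphi$ is irreducible over $F(X)$. Let $n = \dim\varphi_0$, $m = \dim\varphi_1$, and introduce independent transcendentals $\bX = (X_1,\dots,X_n)$ and $\bY = (Y_1,\dots,Y_m)$. Set $L = F(\bX, \bY)$ and $Z = \varphi_0(\bX)/\varphi_1(\bY) \in L$; since $\varphi_0(\bX)$ and $\varphi_1(\bY)$ are nonzero polynomials in disjoint variable sets, $Z \in L \setminus F$, and because $F$ is algebraically closed in the purely transcendental extension $L/F$, $Z$ is transcendental over $F$. The $F$-algebra map $F[X,\bX,\bY] \to L$ sending $X \mapsto Z$, $\bX \mapsto \bX$, $\bY \mapsto \bY$ carries $\varphi_0(\bX) + X\varphi_1(\bY)$ to $\varphi_0(\bX) + \varphi_0(\bX) = 0$ (here we use $\chrst F = 2$). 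Inverting $X$ (permissible since $Z \neq 0$) and passing to the quotient by the prime ideal $(\varphi_0(\bX) + X\varphi_1(\bY))$ (prime thanks to the irreducibility of $\varphi$), I obtain an $F$-algebra embedding $F(X)(\varphi) \hookrightarrow L$. Its image contains $F$, $\bX$ and $\bY$, so it equals $L$, giving $F(X)(\varphi) \simeq F(\bX, \bY)$, which is purely transcendental over $F$.

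The remaining reducible cases are handled by the conventions $F(X)(\H) = F(X)(\sqf{a}) = F(X)$. By the reducibility criterion of \cite{Ahmad1997} combined with the fact (an analogue of Example~\ref{Ex:PowSerAnisotropy}) that $\varphi$ is nondefective over $F(X)$, $\varphi$ can be reducible over $F(X)$ only if $\dim\varphi \leq 1$ or $\varphi \cong \H$ over $F(X)$; in all such cases $F(X)(\varphi) = F(X)$, again purely transcendental over $F$. The degenerate configurations $\varphi_0 = 0$ or $\varphi_1 = 0$ I would treat as tacitly excluded, as they do not genuinely fit the setup $\varphi_0 \ort X\varphi_1$.

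Now write $\tau \cong \iw{\tau}\times\H \ort \tau_\an \ort \iql{\tau}\times\sqf{0}$ with $\tau_\an$ anisotropic. Since $F(X)(\varphi)/F$ is purely transcendental, Lemma~\ref{Lemma:IsotropyTransc} keeps $(\tau_\an)_{F(X)(\varphi)}$ anisotropic, so the displayed decomposition is also a Witt decomposition of $\tau_{F(X)(\varphi)}$; reading off invariants yields $\iw{\tau_{F(X)(\varphi)}} = \iw{\tau}$ and $\iql{\tau_{F(X)(\varphi)}} = \iql{\tau}$, with the ``in particular'' clause immediate. The main obstacle is the bookkeeping around irreducibility of $\varphi$ over $F(X)$ and the isolation of the reducible or degenerate configurations; once those are dealt with, the parametrization --- hinging on the observation that $\varphi_0(\bX) + X\varphi_1(\bY)$ is linear in $X$ and that characteristic~2 erases the inconvenient sign --- is essentially transparent.
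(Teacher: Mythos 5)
Your proof is correct, and it takes a genuinely different route from the paper's. The paper handles the case where some $\varphi_i$ is isotropic by noting $\H\subseteq\varphi$ and invoking Lemma~\ref{Lemma:FunFieldTransc}; for the anisotropic case it assumes $\tau_{F(X)(\varphi)}$ isotropic, translates this via Theorem~\ref{Th:CharacterizationIsotropyOverFunField} into $\varphi(\bY)\in D_{F(X,\bY)}^*(\tau)^2$, clears denominators, and derives a contradiction from the parity of the $X$-degree (the right-hand side has even $X$-degree, while $h^2\varphi(\bY)$ has odd $X$-degree because $\varphi=\varphi_0\ort X\varphi_1$ is linear in $X$). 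You instead prove outright that $F(X)(\varphi)/F$ is purely transcendental, by parametrizing the hypersurface $\varphi_0(\bX)+X\varphi_1(\bY)=0$ via $X\mapsto\varphi_0(\bX)/\varphi_1(\bY)$ — exactly the observation that linearity in $X$ lets you solve for $X$ — and then the two invariants are preserved by a single application of Lemma~\ref{Lemma:IsotropyTransc} to the Witt decomposition. Your argument is more elementary (it bypasses the heavy Theorem~\ref{Th:CharacterizationIsotropyOverFunField}), more geometric, and yields a stronger conclusion (rationality of $F(X)(\varphi)$ over $F$, not just preservation of $\mathfrak{i}_{\mathrm{W}}$ and $\mathfrak{i}_{\mathrm{ql}}$), whereas the paper's argument is more in the spirit of the represented-element machinery developed in Section~4. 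One small point worth making explicit: when you pass to the quotient by $(\varphi_0(\bX)+X\varphi_1(\bY))$, you also need the kernel of the map $F[X,\bX,\bY][X^{-1}]\to L$ to be \emph{exactly} that prime (not merely contain it); this follows from a transcendence-degree count since the image of the induced map already has fraction field $L$. Your caveat about the degenerate cases $\varphi_0=0$ or $\varphi_1=0$ is apt and worth flagging — the lemma as stated can fail there (take $\varphi_1=0$ and $\tau=\varphi_0$ anisotropic of dimension $\geq 2$) — but the paper's own proof tacitly excludes them as well, since its $X$-degree-parity contradiction also evaporates when $\varphi_1=0$.
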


\begin{proof}
If $\varphi_i$ is isotropic for some $i$, then necessarily $\H\subseteq\varphi$, and hence $F(X)(\varphi)/F$ is purely transcendental by Lemma~\ref{Lemma:FunFieldTransc} (because $\varphi$ is nondefective by Example~\ref{Ex:PowSerAnisotropy}). Then the claim follows by Lemma~\ref{Lemma:IsotropyTransc}. Hence, we can suppose that $\varphi_0$ and $\varphi_1$ are anisotropic.

Assume that $\tau$ is anisotropic. Let $\dim\tau=d$, $\dim\varphi=n$, and denote $\bY=(Y_1,\dots,Y_n)$, an $n$-tuple of variables. Without loss of generality, suppose that $1\in D_{F(X)}^*(\varphi)$. For a contradiction, assume that $\tau_{F(X)(\varphi)}$ is isotropic; then, by Theorem~\ref{Th:CharacterizationIsotropyOverFunField}, $\varphi(\bY)\in D_{F(X,\bY)}^*(\tau)^2$. Thus, after the usual multiplication by a common denominator, we get
\begin{equation} \label{Eq:ComparingXvaluation}
h^2(X,\bY)\varphi(\bY)=\prod_{i=1}^2\tau(\xi_{i1}(X,\bY),\dots,\xi_{id}(X,\bY))
\end{equation}
for some $h\in F[X,\bY]$ and $\xi_{ij}\in F[X,\bY]$, $1\leq i\leq 2$, $1\leq j \leq d$. Clearly, $\deg_X(\tau(\xi_{i1}(X,\bY),\dots,\xi_{id}(X,\bY)))$ is even for each $i$, and hence the degree in $X$ of the polynomial on the right side of \eqref{Eq:ComparingXvaluation} is even. On the other hand, we know $\deg_X(\varphi(\bY))=1$, so the degree in $X$ of the polynomial on the left side of \eqref{Eq:ComparingXvaluation} is odd. That is absurd; therefore, $\tau_{F(X)(\varphi)}$ is anisotropic.

If $\tau$ is isotropic, then we know by the previous part of the proof that $\tau_{\an}$ remains anisotropic over $F(X)(\varphi)$. The claim follows.
\end{proof}

\begin{remark}
As it was pointed out by the anonymous referee, the proof of Lemma~\ref{Lemma:AnisotropyPurelyInsepAndFunField} can be simplified: the field $F(X)(\varphi)$ is rational over $F$, because the defining relation $\varphi_0(\bY_0)+X\varphi_1(\bY_1)=0$ can be used to eliminate $X$. However, I decided to keep the original proof as a demonstration of the application of Theorem~\ref{Th:CharacterizationIsotropyOverFunField}.
\end{remark}

\begin{proposition}
 \label{Prop:XsumsOfQFs}
Let $\varphi_0$, $\varphi_1$, $\psi_0$, $\psi_1$ be nondefective quadratic forms over $F$. Write $F'=F(X)$ and $F''=F\dbrac{X}$. Let $\varphi=\varphi_0\ort X\varphi_1$, $\psi=\psi_0\ort X\psi_1$ be quadratic forms over $F'$, and set
\[\mE=\{E~|~E/F \text{ s.t. } \iql{(\varphi_0)_E}=\iql{(\varphi_1)_E}=\iql{(\psi_0)_E}=\iql{(\psi_1)_E}=0\}.\]
\begin{enumerate}[(1)]
	\item The following are equivalent:
				\begin{enumerate}[(i), font=\normalfont]
						\item $\varphi_{F'(\psi)}$ is isotropic,
						\item $\varphi_{F''(\psi)}$ is isotropic,
						\item $D_E^*(\psi_0)D_E^*(\psi_1)\subseteq D_E^*(\varphi_0)D_E^*(\varphi_1)$ for each $E\in\mE$.
				\end{enumerate}
	\item Let the form $\varphi_{F'(\psi)}$ (or the form $\varphi_{F''(\psi)}$) be isotropic. If $\dim\psi_i\geq2$ for an $i\in\{0,1\}$, then at least one of the forms $\varphi_0$ and $\varphi_1$ is isotropic over $F(\psi_i)$.
\end{enumerate}
\end{proposition}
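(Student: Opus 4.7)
The plan is a cyclic proof of~(1): the implication (i)~$\Rightarrow$~(ii) is immediate from the inclusion $F'(\psi)\subseteq F''(\psi)$. For (ii)~$\Rightarrow$~(iii) I fix $E\in\mE$ and pass to $E''=E\dbrac{X}$, extracting representations over $E$ by means of the $X$-adic valuation. For (iii)~$\Rightarrow$~(i) I identify $F'(\psi)$ with a purely transcendental extension of $F$ in which $X$ becomes the ratio $\psi_0(\boldxi)/\psi_1(\boldzeta)$ and translate the hypothesis into an explicit isotropy relation. Part~(2) is then obtained from Proposition~\ref{Prop:TransitionOfIsotropy} applied with $\sigma=\psi_i$ over the base field $F(X)$.

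For (iii)~$\Rightarrow$~(i), the key observation is that $F'(\psi)=F(X)(\psi)$ is isomorphic, as an $F$-algebra, to the purely transcendental extension $F(\boldxi,\boldzeta)$ (with $\boldxi,\boldzeta$ tuples of indeterminates of sizes $\dim\psi_0$ and $\dim\psi_1$), via $X=\psi_0(\boldxi)/\psi_1(\boldzeta)$. This field lies in $\mE$ by Lemma~\ref{Lemma:IsotropyTransc}, and $D_E^*(\sigma)$ is closed under inversion for any nondefective $\sigma$, since $1/\sigma(\xi)=\sigma(\xi/\sigma(\xi))$. Applying (iii) to $\psi_0(\boldxi)\in D_{F(\boldxi,\boldzeta)}^*(\psi_0)$ and $\psi_1(\boldzeta)^{-1}\in D_{F(\boldxi,\boldzeta)}^*(\psi_1)$, I obtain $\psi_0(\boldxi)/\psi_1(\boldzeta)=\varphi_0(v')/\varphi_1(w')$ for nonzero vectors $v',w'$; in characteristic two this is precisely the isotropy relation $\varphi_0(v')+X\varphi_1(w')=0$ in $F'(\psi)$.

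For (ii)~$\Rightarrow$~(iii) I fix $E\in\mE$ and work over $E''$. By Example~\ref{Ex:PowSerAnisotropy} both $\varphi_{E''}$ and $\psi_{E''}$ are nondefective; the isotropy of $\varphi$ over $F''(\psi)\subseteq E''(\psi)$ is inherited, and Theorem~\ref{Th:CharacterizationIsotropyOverFunField} applied over $E''$ yields $D_{E''}^*(\psi)^2\subseteq D_{E''}^*(\varphi)^2$. Given $a\in D_E^*(\psi_0)$ and $b\in D_E^*(\psi_1)$, both $a$ and $Xb$ lie in $D_{E''}^*(\psi)$ via the two natural embeddings of the blocks of $\psi$, hence $abX=\varphi(u_1)\varphi(u_2)$ over $E''$ for some $u_i=(v_i,w_i)$. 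After discarding the trivial case in which some $(\varphi_j)_E$ is isotropic (and hence universal, being nondefective), Lemma~\ref{Lemma:CDVfieldsEvenValuationConcretely} applied separately to $(\varphi_0)_E$ and $(\varphi_1)_E$ shows that $v_X(\varphi_0(v_i))$ is always even while $v_X(X\varphi_1(w_i))$ is always odd. Since $v_X(abX)=1$, exactly one of $v_X(\varphi(u_1)),\,v_X(\varphi(u_2))$ is even and the other odd, and in each factor the block of matching parity must dominate the $X$-adic valuation. Rescaling each $u_i$ by a suitable power of $X$ so that its dominant block becomes primitive over $E\dbraq{X}$ and reducing the resulting identity modulo $X$ then produces $ab=\varphi_0(\overline{v_1'})\cdot\varphi_1(\overline{w_2'})\in D_E^*(\varphi_0)D_E^*(\varphi_1)$.

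For part~(2), the form $\psi$ is nondefective over $F(X)$ by Example~\ref{Ex:PowSerAnisotropy}, and the generic zero of $\psi_i$ over $F(\psi_i)$ (which exists since $\psi_i$ is nondefective with $\dim\psi_i\ge2$, hence irreducible) yields a nonzero $v$ with $\psi_i(v)=0$. Placing $v$ in the $\psi_i$-block of $\psi=\psi_0\ort X\psi_1$ and padding the other block with zeros gives an isotropic vector of $\psi$ over $F(X)(\psi_i)=F(\psi_i)(X)$. Proposition~\ref{Prop:TransitionOfIsotropy}, applied over $F(X)$, then forces isotropy of $\varphi$ over $F(\psi_i)(X)$, and the contrapositive of Example~\ref{Ex:PowSerAnisotropy} over the base $F(\psi_i)$ finally yields isotropy of $(\varphi_0)_{F(\psi_i)}$ or $(\varphi_1)_{F(\psi_i)}$. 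The principal technical obstacle is the valuation bookkeeping in (ii)~$\Rightarrow$~(iii): without the parity-matching step one would only learn that $ab$ is a product of two elements of $D_{E''}^*(\varphi)$, not that it factors as one element of $D_E^*(\varphi_0)$ times one element of $D_E^*(\varphi_1)$ coming separately from the two blocks.
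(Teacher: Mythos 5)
Your proof is correct but takes a genuinely different route from the paper in the two nontrivial implications, and I will compare the two approaches. For (ii)~$\Rightarrow$~(iii), the paper avoids valuation bookkeeping entirely: given $a\in D_E^*(\psi_0)D_E^*(\psi_1)$ it adjoins $\sqrt{aX}$ to $E'' = E\dbrac{X}$, observes that over $E''(\sqrt{aX})$ one has $X\equiv a\bmod$ squares, so $\psi_{E''(\sqrt{aX})}\cong(\psi_0\ort a\psi_1)_{E''(\sqrt{aX})}$ is isotropic; then Proposition~\ref{Prop:TransitionOfIsotropy} gives isotropy of $\varphi_{E''(\sqrt{aX})}\cong(\varphi_0\ort a\varphi_1)_{E''(\sqrt{aX})}$, and Lemma~\ref{Lemma:CDVfieldsIsotropy} pushes this down to the residue field $E$, yielding $a\in D_E^*(\varphi_0)D_E^*(\varphi_1)$ in one stroke. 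Your argument instead first quotes Theorem~\ref{Th:CharacterizationIsotropyOverFunField} over the base $E''$ to obtain $D_{E''}^*(\psi)^2\subseteq D_{E''}^*(\varphi)^2$, feeds in $a$ and $Xb$, and then carries out a parity analysis of $X$-adic valuations via Lemma~\ref{Lemma:CDVfieldsEvenValuationConcretely} to separate the two blocks. Both work; the paper's $\sqrt{aX}$-trick is arguably slicker and avoids the care needed in the rescaling/reduction step, whereas your argument is more self-contained in its mechanics but relies on a large theorem from the preceding section (still logically fine, as that theorem is already established). For (iii)~$\Rightarrow$~(i), the content is the same (express $X$ as a ratio $\varphi_0(v')/\varphi_1(w')$ over $F'(\psi)$), but you justify $F'(\psi)\in\mathcal{E}$ by noting that $F'(\psi)\simeq F(\boldxi,\boldzeta)$ is purely transcendental over $F$ — a nice, elementary observation — whereas the paper invokes Lemma~\ref{Lemma:AnisotropyPurelyInsepAndFunField}; your route is cheaper here, though you should be slightly careful about the degenerate cases where one of $\psi_0,\psi_1$ is the zero form, since then $F'(\psi)$ is purely transcendental over $F(\psi_j)$ but not over $F$. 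For part~(2), you work over $F(X)$ and close with Example~\ref{Ex:PowSerAnisotropy} over the base $F(\psi_i)$, while the paper passes to $F\dbrac{X}$ and first reduces to the case where all four forms are anisotropic; your version is a bit shorter since the appeal to Example~\ref{Ex:PowSerAnisotropy} absorbs the case distinction.
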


\begin{proof}
We start by proving (1). The implication (i) $\Rightarrow$ (ii) is obvious. 

For (ii) $\Rightarrow$ (iii), let $E\in\mE$, and denote $E'=E(X)$ and $E''=E\dbrac{X}$. If $(\varphi_i)_E$ is isotropic for some $i$, then, since $\iql{(\varphi_i)_E}=0$ by the assumption, we have $\H\subseteq\varphi_i$; in that case, $D_E^*(\varphi_0)D_E^*(\varphi_1)=E^*$, and the claim follows trivially. Thus, assume that both $\varphi_0$ and $\varphi_1$ are anisotropic over $E$. Let $a\in D_E^*(\psi_0)D_E^*(\psi_1)$; then $(\psi_0\ort a\psi_1)_E$ is isotropic. Obviously, we have $\psi_{E''(\sqrt{aX})}\cong(\psi_0\ort a\psi_1)_{E''(\sqrt{aX})}$, and hence $\psi_{E''(\sqrt{aX})}$ is isotropic. As $\varphi_{F''(\psi)}$ is isotropic by the assumption, $\varphi_{E''(\psi)}$ must be isotropic as well. Furthermore, $\varphi_{E''}$ and $\psi_{E''}$ are nondefective (see Example~\ref{Ex:PowSerAnisotropy}). It follows from Proposition~\ref{Prop:TransitionOfIsotropy} that $\varphi_{E''(\sqrt{aX})}$ is isotropic. Moreover, we have $\varphi_{E''(\sqrt{aX})}\cong(\varphi_0\ort a\varphi_1)_{E''(\sqrt{aX})}$. Since $E''(\sqrt{aX})$ is a complete discrete valuation field with residue field $E$, the form $(\varphi_0\ort a\varphi_1)_E$ is isotropic by Lemma~\ref{Lemma:CDVfieldsIsotropy}. Since both $\varphi_0$ and $\varphi_1$ are anisotropic over $E$, we get that $a\in D_E^*(\varphi_0)D_E^*(\varphi_1)$.

Now we prove (iii) $\Rightarrow$ (i): First, note that $F'(\psi)\in\mE$ by Lemma~\ref{Lemma:AnisotropyPurelyInsepAndFunField}. Since $\psi_{F'(\psi)}\cong(\psi_0\ort X\psi_1)_{F'(\psi)}$ is isotropic, we get $X\in D_{F'(\psi)}^*(\psi_0)D_{F'(\psi)}^*(\psi_1)$. Therefore, $X\in D_{F'(\psi)}^*(\varphi_0)D_{F'(\psi)}^*(\varphi_1)$, which implies that the quadratic form $(\varphi_0\ort X\varphi_1)_{F'(\psi)}\cong\varphi_{F'(\psi)}$ is isotropic.

\smallskip

To prove (2), note that it follows from the assumptions that $\varphi_{F''(\psi)}$ is isotropic in any case. Observe that if $\psi_0$ or $\psi_1$ is isotropic, then $\varphi$ must be isotropic over $F''$ by Lemma~\ref{Lemma:IsotropyTransc}, and in that case $\varphi_0$ or $\varphi_1$ is isotropic over $F$ by Lemma~\ref{Lemma:CDVfieldsAnisotropy}. But if $\varphi_0$ or $\varphi_1$ is isotropic over $F$, then the claim is trivial. Hence, suppose that $\varphi_0$, $\varphi_1$, $\psi_0$, $\psi_1$ are all anisotropic. 

Assume that $i\in\{0,1\}$ such that $\dim\psi_i\geq2$; then $\psi_{F''(\psi_i)}$ is isotropic. As $\varphi_{F''}$, $\psi_{F''}$ and $(\psi_i)_{F''}$ are anisotropic, it follows by Proposition~\ref{Prop:TransitionOfIsotropy} that $\varphi_{F''(\psi_i)}$ is isotropic, too. Since $F''(\psi_i)\subseteq F(\psi_i)\dbrac{X}$, it follows that $\varphi$ is isotropic over $F(\psi_i)\dbrac{X}$. By Lemma~\ref{Lemma:CDVfieldsAnisotropy}, $\varphi_0$ or $\varphi_1$ must be isotropic over $F(\psi_i)$.
\end{proof}

\begin{corollary}
Let $\varphi_0$, $\varphi_1$, $\psi_0$, $\psi_1$, $\varphi$, $\psi$, $\mE$ be as in Proposition~\ref{Prop:XsumsOfQFs}. Then the following are equivalent:
\begin{enumerate}
	\item $\varphi\simstb\psi$,
	\item $D_E^*(\psi_0)D_E^*(\psi_1)= D_E^*(\varphi_0)D_E^*(\varphi_1)$ for each $E\in\mE$.
\end{enumerate}
\end{corollary}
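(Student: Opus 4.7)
The plan is to reduce the claim directly to Proposition~\ref{Prop:XsumsOfQFs}(1) applied symmetrically in both directions.

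First, I would observe that the forms $\varphi = \varphi_0\ort X\varphi_1$ and $\psi = \psi_0\ort X\psi_1$ are nondefective over $F'=F(X)$: since $\varphi_0,\varphi_1,\psi_0,\psi_1$ are nondefective over $F$ by assumption, Example~\ref{Ex:PowSerAnisotropy} (applied to the anisotropic parts and the defect parts separately) yields that $\varphi$ and $\psi$ are nondefective over $F'$. Hence $\varphi\simstb\psi$ is equivalent, by definition, to $\varphi_{F'(\psi)}$ being isotropic \emph{and} $\psi_{F'(\varphi)}$ being isotropic.

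Next, I would apply Proposition~\ref{Prop:XsumsOfQFs}(1) to the pair $(\varphi,\psi)$: this gives
\[\varphi_{F'(\psi)}\text{ isotropic}\iff D_E^*(\psi_0)D_E^*(\psi_1)\subseteq D_E^*(\varphi_0)D_E^*(\varphi_1)\ \text{ for every }E\in\mE.\]
Then I would apply the same proposition to the pair $(\psi,\varphi)$ obtained by swapping the roles of the $\varphi_i$'s and $\psi_i$'s. The crucial point — and essentially the only thing to verify — is that the set $\mE$ does not change under this swap. But $\mE$ is defined symmetrically by the condition $\iql{(\varphi_0)_E}=\iql{(\varphi_1)_E}=\iql{(\psi_0)_E}=\iql{(\psi_1)_E}=0$, so it is invariant. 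Hence
\[\psi_{F'(\varphi)}\text{ isotropic}\iff D_E^*(\varphi_0)D_E^*(\varphi_1)\subseteq D_E^*(\psi_0)D_E^*(\psi_1)\ \text{ for every }E\in\mE.\]

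Combining the two equivalences gives (i)~$\Leftrightarrow$~(ii), since the conjunction of the two inclusions over every $E\in\mE$ is exactly the equality $D_E^*(\psi_0)D_E^*(\psi_1)=D_E^*(\varphi_0)D_E^*(\varphi_1)$ for every $E\in\mE$. There is no substantial obstacle here; the only non-automatic point is the symmetry of $\mE$ and the verification that $\varphi,\psi$ are nondefective over $F'$ so that $\simstb$ is the correct notion. The heavy lifting has been done already in Proposition~\ref{Prop:XsumsOfQFs}.
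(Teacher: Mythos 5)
Your proof is correct and matches the approach the paper intends (the paper gives no explicit proof, treating the corollary as an immediate symmetric application of Proposition~\ref{Prop:XsumsOfQFs}(1)). You correctly identify the two points that genuinely need checking — that $\varphi,\psi$ are nondefective over $F(X)$ so that $\simstb$ is indeed the conjunction of the two isotropy conditions, and that $\mE$ is invariant under swapping the $\varphi_i$'s and $\psi_i$'s — and both verifications are sound.
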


An \emph{$n$-fold bilinear Pfister form} is a bilinear form $\sqf{1,a_1}_\b\otimes\cdots\otimes\sqf{1,a_n}_\b$ with $a_1,\dots,a_n\in F^*$; we write $\pf{a_1,\dots,a_n}_\b$ for short.

\begin{lemma}
\label{Lemma:XXXPFmultiples}
Let $\varphi$, $\psi$ be nondefective quadratic forms over $F$, $\dim\psi\geq2$, and let
\[\mE=\{E~|~E/F \text{ an extension s.t. } \iql{\varphi_E}=\iql{\psi_E}=0\}.\]
Then the following are equivalent:
\begin{enumerate}
	\item $\varphi_{F(\psi)}$ is isotropic,
	\item $D_E(\psi)^2\subseteq D_E(\varphi)^2$ for each $E\in\mE$,
	\item $\varphi\ort X\varphi$ is isotropic over $F(X)(\psi\ort X\psi)$,
	\item $\varphi\ort X\varphi$ is isotropic over $F\dbrac{X}(\psi\ort X\psi)$,
	\item for every $n\geq0$, $(\pf{X_1,\dots,X_n}_{\b}\otimes \varphi)_{F(X_1,\dots,X_n)(\pf{X_1,\dots,X_n}_{\b}\otimes\psi)}$ is isotropic,
	\item for every $n\geq0$, $(\pf{X_1,\dots,X_n}_{\b}\otimes \varphi)_{F\dbrac{X_1}\dots\dbrac{X_n}(\pf{X_1,\dots,X_n}_{\b}\otimes\psi)}$ is isotropic.
\end{enumerate}
\end{lemma}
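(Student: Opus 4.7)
The aim is to reduce the whole chain of equivalences to results already established. The first four items (i)--(iv) are handled by direct appeal to Theorem~\ref{Th:CharacterizationIsotropyOverFunField} (with Corollary~\ref{Cor:CharacterizationIsotropyOverFunField_Simplified}) and Proposition~\ref{Prop:XsumsOfQFs}. For (i)~$\Leftrightarrow$~(ii), since $\varphi,\psi\neq0$ we have $D_E(\psi)^2=D_E^*(\psi)^2\cup\{0\}$ and likewise for $\varphi$, so (ii) is equivalent to $D_E^*(\psi)^2\subseteq D_E^*(\varphi)^2$ for every $E\in\mE$; the forward direction is a special case of (i)~$\Rightarrow$~(ii) in Theorem~\ref{Th:CharacterizationIsotropyOverFunField} (our $\mE$ is contained in the set appearing there), and for the converse one checks that $E=F(Y_1,\dots,Y_{\dim\psi})$ lies in $\mE$ (purely transcendental extensions preserve the Witt decomposition, so $\iql{\varphi_E}=\iql{\psi_E}=0$) and applies Corollary~\ref{Cor:CharacterizationIsotropyOverFunField_Simplified}. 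For (ii)~$\Leftrightarrow$~(iii)~$\Leftrightarrow$~(iv), I would specialise Proposition~\ref{Prop:XsumsOfQFs}(1) to $\varphi_0=\varphi_1=\varphi$ and $\psi_0=\psi_1=\psi$: then the set $\mE$ of the proposition coincides with ours, its condition (iii) becomes exactly (ii) above, and its conditions (i), (ii) are precisely (iii), (iv) of the present lemma.

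For (i)~$\Leftrightarrow$~(v) I would argue by induction on $n$. The base case $n=0$ is immediate since $\pf{\,}_\b$ is the trivial Pfister form. For the inductive step, write $\pi_k=\pf{X_1,\dots,X_k}_\b$, $K_{n-1}=F(X_1,\dots,X_{n-1})$, and use
\[
\pi_n\otimes\varphi\cong(\pi_{n-1}\otimes\varphi)\ort X_n(\pi_{n-1}\otimes\varphi),\qquad \pi_n\otimes\psi\cong(\pi_{n-1}\otimes\psi)\ort X_n(\pi_{n-1}\otimes\psi).
\]
Setting $\varphi'=(\pi_{n-1}\otimes\varphi)_{K_{n-1}}$ and $\psi'=(\pi_{n-1}\otimes\psi)_{K_{n-1}}$, the level-$n$ statement of (v) is exactly the isotropy of $\varphi'\ort X_n\varphi'$ over $K_{n-1}(X_n)(\psi'\ort X_n\psi')$. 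The already proved equivalence (i)~$\Leftrightarrow$~(iii) of this lemma, applied over the base field $K_{n-1}$ to the pair $\varphi',\psi'$, translates this into the isotropy of $\varphi'_{K_{n-1}(\psi')}$, which is the level-$(n{-}1)$ statement of (v). The induction then closes. The equivalence (i)~$\Leftrightarrow$~(vi) is proved in the same fashion, invoking (i)~$\Leftrightarrow$~(iv) at each step with $K_{n-1}=F\dbrac{X_1}\cdots\dbrac{X_{n-1}}$.

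The main obstacle, though technical rather than conceptual, is to guarantee that the hypotheses of (i)~$\Leftrightarrow$~(iii) and (i)~$\Leftrightarrow$~(iv) remain satisfied at every level of the induction: the forms $\varphi'$ and $\psi'$ must be nondefective over $K_{n-1}$ and one needs $\dim\psi'\geq2$. The dimension condition is immediate from $\dim\psi\geq2$. For nondefectiveness one argues by a separate induction: if $\tau$ is nondefective over a field $L$, then $\tau$ and $X\tau$ are each nondefective over $L(X)$ and over $L\dbrac{X}$ (by Lemma~\ref{Lemma:IsotropyTransc} for the transcendental case and by Example~\ref{Ex:PowSerAnisotropy} for the power-series case), and the orthogonal sum $\tau\ort X\tau$ is then nondefective in both cases by the same Example~\ref{Ex:PowSerAnisotropy} applied to the nondefective part. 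Iterating shows that $\pi_k\otimes\varphi$ and $\pi_k\otimes\psi$ are nondefective at every stage, so the induction goes through.
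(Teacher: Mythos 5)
Your proof is correct and takes essentially the same route as the paper: (i)--(ii) via Theorem~\ref{Th:CharacterizationIsotropyOverFunField} and Corollary~\ref{Cor:CharacterizationIsotropyOverFunField_Simplified}, (ii)--(iv) via the specialisation $\varphi_0=\varphi_1=\varphi$, $\psi_0=\psi_1=\psi$ of Proposition~\ref{Prop:XsumsOfQFs}, and (v)/(vi) by iterating (i)~$\Leftrightarrow$~(iii) resp.\ (i)~$\Leftrightarrow$~(iv). You make explicit the induction and the verification that nondefectiveness (and $\dim\geq2$) persist at each stage, which the paper leaves implicit in its phrase ``repeated application of (i)~$\Rightarrow$~(iii).''
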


\begin{proof}
(i) $\Rightarrow$ (ii) follows from Theorem~\ref{Th:CharacterizationIsotropyOverFunField}, and, since $F(X_1,\dots,X_{\dim\psi})\in\mE$ by Lemma~\ref{Lemma:IsotropyTransc}, (ii) $\Rightarrow$ (i) follows from Corollary~\ref{Cor:CharacterizationIsotropyOverFunField_Simplified}. Equivalences (ii) $\Leftrightarrow$ (iii) $\Leftrightarrow$ (iv) are a consequence of Proposition~\ref{Prop:XsumsOfQFs}. Furthermore, (iii) is a special case of (v). For the other way around, note that (v) coincides with (i) for $n=0$; for $n=1$, (v) coincides with (iii); and for $n>1$, (v) can be obtained by a repeated application of (i) $\Rightarrow$ (iii). Thus, (iii) $\Leftrightarrow$ (v). Analogously, we get (iv) $\Leftrightarrow$ (vi).
\end{proof}

\begin{theorem}\label{Th:IsotropyPfisterMultiples} 
Let $\varphi$, $\psi$ be quadratic forms over $F$, and let $\pi$ be a bilinear Pfister form over $F$. If $\varphi_{F(\psi)}$ is isotropic, then $(\pi\otimes\varphi)_{F(\pi\otimes\psi)}$ is isotropic.
\end{theorem}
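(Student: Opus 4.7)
The plan is to induct on the fold $n$ of $\pi$, reducing the general statement to the $1$-fold case. First I would dispatch the degenerate situations: if $\pi$ is isotropic then it is metabolic, hence contains $\sqf{1,1}_{\b}$ as a direct factor, and $\pi\otimes\varphi$ contains $\varphi\perp\varphi$, which is isotropic in characteristic two via the vector $(v,v)$; if $\varphi$ has a defect then so does $\pi\otimes\varphi$ and it is isotropic; and if $\psi$ has a defect, has dimension one, or is isotropic nondefective, then Lemmas~\ref{Lemma:DefectTransc},~\ref{Lemma:FunFieldTransc},~\ref{Lemma:IsotropyTransc} force $\varphi$ to be isotropic over $F$ already, so the conclusion is trivial. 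Hence I may assume $\pi$, $\varphi$, $\psi$ are all anisotropic nondefective with $\dim\psi\ge2$.

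For the induction step, write $\pi=\pi'\otimes\pf{a}_{\b}$ with $\pi'$ an $(n-1)$-fold bilinear Pfister form. The inductive hypothesis applied to $\pi'$ gives $(\pi'\otimes\varphi)_{F(\pi'\otimes\psi)}$ isotropic; setting $\tilde\varphi=\pi'\otimes\varphi$ and $\tilde\psi=\pi'\otimes\psi$, the entire induction step reduces to the $1$-fold statement: \emph{for any nondefective quadratic forms $\tilde\varphi,\tilde\psi$ over $F$ with $\tilde\varphi_{F(\tilde\psi)}$ isotropic, and any $a\in F^*$, the form $(\tilde\varphi\perp a\tilde\varphi)_{F(\tilde\psi\perp a\tilde\psi)}$ is isotropic.}

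The $1$-fold case rests on the following elementary observation, which I would prove first: for any anisotropic nondefective quadratic form $\sigma$ over a field $E$ and any $a\in E^*$, the form $\sigma\perp a\sigma$ is isotropic over $E$ if and only if $a\in D_E^*(\sigma)^2$. The forward direction comes from rewriting $a=\sigma(v)/\sigma(w)=\sigma(v)\sigma(w)\cdot\sigma(w)^{-2}$ and invoking Lemma~\ref{Lemma:MultiplicativeGroups}(ii); the reverse by taking $\sigma(u)=b_1$, $\sigma(w)=b_2$ with $a=b_1b_2$ and computing $\sigma(b_2u)+a\sigma(w)=b_1b_2^2+b_1b_2^2=0$ in characteristic two. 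Now set $E=F(\tilde\psi\perp a\tilde\psi)$; the form $\tilde\psi\perp a\tilde\psi$ is isotropic over $E$, so either $\tilde\psi_E$ is isotropic and hence universal (giving trivially $a\in E^*=D_E^*(\tilde\psi)^2$), or $\tilde\psi_E$ is anisotropic and the observation applies to yield $a\in D_E^*(\tilde\psi)^2$. Theorem~\ref{Th:CharacterizationIsotropyOverFunField} applied to $\tilde\varphi,\tilde\psi$ then gives $D_E^*(\tilde\psi)^2\subseteq D_E^*(\tilde\varphi)^2$, so $a\in D_E^*(\tilde\varphi)^2$, and the observation in reverse furnishes the desired isotropy of $\tilde\varphi\perp a\tilde\varphi$ over $E$.

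The principal obstacle is verifying the hypothesis of Theorem~\ref{Th:CharacterizationIsotropyOverFunField}, namely that $E$ belongs to the family $\mE$. When $\tilde\psi$ is not quasilinear, $\mE$ consists of every extension of $F$ and the argument is complete. In the quasilinear sub-case one must ensure $\iql{\tilde\varphi_E}=0$, i.e.\ that the quasilinear part $\pi'\otimes\ql{\varphi}$ does not become isotropic over the function field of the quasilinear form $\tilde\psi\perp a\tilde\psi$; this is where the characteristic-two peculiarities bite hardest, and would likely need an auxiliary argument based on Lemma~\ref{Lemma:IsotropySQFoverTSQF} together with the theory of function fields of quasilinear forms.
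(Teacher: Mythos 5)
Your route is genuinely different from the paper's: there, Theorem~\ref{Th:IsotropyPfisterMultiples} is deduced from Lemma~\ref{Lemma:XXXPFmultiples} by first proving the isotropy of $\pf{X_1,\dots,X_n}_{\b}\otimes\varphi$ over $F(X_1,\dots,X_n)(\pf{X_1,\dots,X_n}_{\b}\otimes\psi)$ (where the forms are automatically nondefective by Example~\ref{Ex:PowSerAnisotropy}) and then specializing $X_n\mapsto a_n$ one variable at a time through complete discrete valuation fields $F(X_1,\dots,X_{n-1})\dbrac{T}(\sqrt{a_nT})$. You instead induct on the fold and attack the one-fold statement directly with an elementary observation about $\sigma\perp a\sigma$ combined with Theorem~\ref{Th:CharacterizationIsotropyOverFunField}. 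When it works, your argument is shorter and more transparent; the cost is that, by working over $F$ rather than over the rational function field, you run into exactly the defect pathologies that the paper's detour was built to avoid.

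Your diagnosis of the principal obstacle is off. Verifying $E\in\mE$ is in fact trivial: if $\iql{\tilde\varphi_E}>0$ then $\tilde\varphi_E$ is already isotropic, hence so is $(\tilde\varphi\perp a\tilde\varphi)_E$, and you are done; otherwise $\iql{\tilde\varphi_E}=0$ and $E\in\mE$ as required. No appeal to Lemma~\ref{Lemma:IsotropySQFoverTSQF} or to the theory of function fields of quasilinear forms is needed. The actual gap is in the earlier step where you claim that if $\tilde\psi_E$ is isotropic then it is \emph{universal}. This is false when $\tilde\psi$ is quasilinear: the isotropy of $\tilde\psi_E$ is then defective isotropy, and a defective quasilinear form need not be universal, so the equality $D_E^*(\tilde\psi)^2=E^*$ is unjustified. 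The fix is to bypass the abstract dichotomy and use the generic point directly: writing $\dim\tilde\psi=m$ and letting $\overline{X}=(\overline{X}_1,\dots,\overline{X}_{2m})$ be the image of the coordinate vector in $E=F(\tilde\psi\perp a\tilde\psi)$, the quantity $\tilde\psi(\overline{X}_1,\dots,\overline{X}_m)$ is nonzero in $E$ (the irreducible quadratic $\tilde\psi(X_1,\dots,X_m)+a\tilde\psi(X_{m+1},\dots,X_{2m})$ cannot divide the polynomial $\tilde\psi(X_1,\dots,X_m)$ in fewer variables), and likewise for $\tilde\psi(\overline{X}_{m+1},\dots,\overline{X}_{2m})$; hence $a=\tilde\psi(\overline{X}_1,\dots,\overline{X}_m)\cdot\tilde\psi(\overline{X}_{m+1},\dots,\overline{X}_{2m})^{-1}\in D_E^*(\tilde\psi)^2$ whether or not $\tilde\psi_E$ is anisotropic. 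Finally, the reduction to the one-fold case also needs a word: in the inductive step $\tilde\varphi=\pi'\otimes\varphi$ and $\tilde\psi=\pi'\otimes\psi$ need not inherit nondefectivity from $\varphi$, $\psi$ even when $\pi'$ is anisotropic; this is harmless (if $\tilde\varphi$ is defective the conclusion is trivial, and if $\tilde\psi$ is defective one reduces via Lemma~\ref{Lemma:DefectTransc}) but should be said explicitly. With these repairs, your plan goes through and gives a valid alternative to the paper's specialization argument.
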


\begin{proof}
Suppose without loss of generality that $\varphi$, $\psi$, $\pi$ are anisotropic over $F$. Let $\pi\cong\pf{a_1,\dots,a_n}_{\b}$. If $n=0$, i.e., $\pi\cong\sqf{1}$, then there is nothing to prove; thus, assume $n\geq1$.

Since $\varphi_{F(\psi)}$ is isotropic, we get by Lemma~\ref{Lemma:XXXPFmultiples} that the quadratic form $\pf{X_1,\dots,X_n}_{\b}\otimes\varphi$ is isotropic over $F(X_1, \dots, X_n)(\pf{X_1,\dots,X_n}_{\b}\otimes\psi)$.  Set $T=X_n+a_n$; then we have $F(X_1, \dots, X_n)=F(X_1, \dots, X_{n-1},T)$, and it follows that $F(X_1, \dots, X_{n-1},T)\subseteq F(X_1, \dots, X_{n-1})\dbrac{T}$. Let us write $K=F(X_1, \dots, X_{n-1})\dbrac{T}(\sqrt{a_nT})$; then $1+\frac{T}{a_n}\in K^{*2}$, and so
\[X_n=a_n+T\equiv (a_n+T)\left(1+\frac{T}{a_n}\right)=\frac{(a_n+T)^2}{a_n}\equiv\frac{1}{a_n}\equiv a_n \quad \mod K^{*2}.\]
Therefore, over $K$, we have
\begin{align*}
(\pf{X_1,\dots,X_n}_{\b}\otimes\varphi)_K&\cong(\pf{X_1,\dots,X_{n-1}, a_n}_{\b}\otimes\varphi)_K,\\
(\pf{X_1,\dots,X_n}_{\b}\otimes\psi)_K&\cong(\pf{X_1,\dots,X_{n-1}, a_n}_{\b}\otimes\psi)_K.
\end{align*}
Write $\varphi'\cong\pf{X_1,\dots,X_{n-1}, a_n}_{\b}\otimes\varphi$ and $\psi'\cong\pf{X_1,\dots,X_{n-1}, a_n}_{\b}\otimes\psi$; we have $\varphi_{K(\psi)}\cong\varphi'_{K(\psi')}$, and it follows that $\varphi'_{K(\psi')}$ is isotropic. Note that the forms $\varphi'$, $\psi'$ are defined over $F(X_1,\dots, X_{n-1})$, so in particular, we have $K(\psi')\subseteq F(X_1,\dots, X_{n-1})(\psi')\dbrac{T}(\sqrt{a_nT})$. Hence, $\varphi'$ is isotropic over $F(X_1,\dots, X_{n-1})(\psi')\dbrac{T}(\sqrt{a_nT})$, which is a complete discrete valuation field with the residue field $F(X_1,\dots,X_n)(\psi')$. Therefore, by Lemma~\ref{Lemma:CDVfieldsIsotropy}, $\varphi'_{F(X_1,\dots,X_{n-1})(\psi')}$ is isotropic. We proceed by induction.
\end{proof}

\begin{corollary} \label{Cor:CharIsoThroughPF}
Let $\varphi$, $\psi$ be quadratic forms over $F$. Then the following are equivalent:
\begin{enumerate}
	\item $\varphi_{F(\psi)}$ is isotropic,
	\item for any field extension $E/F$, any $n\geq 0$ and any $n$-fold bilinear Pfister form $\pi$ over $E$, the form $(\pi\otimes\varphi)_{E(\pi\otimes\psi)}$ is isotropic.
\end{enumerate}
\end{corollary}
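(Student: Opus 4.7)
The plan is to derive this corollary as a direct packaging of Theorem~\ref{Th:IsotropyPfisterMultiples} together with the trivial specialization to the empty Pfister form; the whole argument should fit in a few lines.

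For the easy direction (ii) $\Rightarrow$ (i), I would specialize (ii) to $E = F$, $n = 0$, and $\pi = \sqf{1}_{\b}$ (the $0$-fold bilinear Pfister form). Then $\pi \otimes \varphi \cong \varphi$ and $\pi \otimes \psi \cong \psi$, so the conclusion of (ii) in this case reads exactly as (i).

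For the forward direction (i) $\Rightarrow$ (ii), let $E/F$ be any field extension, $n \geq 0$, and $\pi$ an $n$-fold bilinear Pfister form over $E$. The plan is to apply Theorem~\ref{Th:IsotropyPfisterMultiples} over the base field $E$ to the forms $\varphi_E$, $\psi_E$, and $\pi$. The hypothesis required for that application is that $\varphi_{E(\psi_E)}$ is isotropic; once this is in place, the theorem immediately yields isotropy of $(\pi \otimes \varphi)_{E(\pi \otimes \psi)}$, which is precisely (ii).

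The only real check is the transfer of isotropy from $F(\psi)$ to $E(\psi_E)$, and this I expect to be the main (if modest) obstacle. In the generic case, where $\psi_E$ remains irreducible over $E$, the natural inclusion of integral domains $F[X_1,\ldots,X_{\dim\psi}]/(\psi) \hookrightarrow E[X_1,\ldots,X_{\dim\psi}]/(\psi_E)$ extends to a field inclusion $F(\psi) \hookrightarrow E(\psi_E)$, and the isotropy vector of $\varphi_{F(\psi)}$ pulls across it. In the residual degenerate cases (where $\psi_E$ becomes reducible, i.e.\ $(\psi_E)_\nd$ is of type $(0,1)$ or isometric to $\H$), Lemma~\ref{Lemma:DefectTransc} together with the function-field conventions describe $E(\psi_E)/E$ as purely transcendental, and Lemma~\ref{Lemma:IsotropyTransc} reduces the transfer to isotropy already available over $E$. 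No genuine obstacle is anticipated here; only some bookkeeping in the degenerate cases.
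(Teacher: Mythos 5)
Your high-level plan is the right one, and the easy direction is fine: specializing (ii) to $E=F$, $n=0$, $\pi=\sqf{1}_\b$ recovers (i) immediately. For (i) $\Rightarrow$ (ii), applying Theorem~\ref{Th:IsotropyPfisterMultiples} over the base field $E$ is indeed what is wanted, and in the generic case (where the polynomial $\psi(X_1,\dots,X_n)$ stays irreducible over $E$) your argument that $F[\bX]/(\psi)\hookrightarrow E[\bX]/(\psi)$ yields $F(\psi)\hookrightarrow E(\psi_E)$ and carries the isotropy vector across is correct.

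The gap is in what you dismiss as ``bookkeeping'' in the degenerate case. When $\psi(X_1,\dots,X_n)$ becomes reducible over $E$, the conventions give $E(\psi_E)/E$ purely transcendental, so by Lemma~\ref{Lemma:IsotropyTransc} you do indeed need $\varphi_E$ isotropic. But you merely assert that isotropy is ``already available over $E$'' without argument, and this is not automatic. Concretely, if $\psi$ is anisotropic and irreducible over $F$ but $(\psi_E)_{\nd}\cong\H$, then $F(\psi)$ has transcendence degree $\dim\psi-1$ over $F$ while $E(\psi_E)$ has transcendence degree only $\dim\psi-2$ over $E$; so there is no hope of salvaging the degenerate case by an $F$-embedding $F(\psi)\hookrightarrow E(\psi_E)$, and a separate argument really is required. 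What is needed is the specialization principle: since $\psi_E$ is isotropic (which does follow from reducibility, as $\dim\psi_{\nd}\geq 2$), there exists an $F$-place $F(\psi)\to E\cup\{\infty\}$, and because $\varphi$ is defined over $F$, the isotropy of $\varphi_{F(\psi)}$ specializes to isotropy of $\varphi_E$. This is a standard but genuinely nontrivial step; it is not among the tools explicitly collected in Section~\ref{Sec:Prel} (Proposition~\ref{Prop:TransitionOfIsotropy} only treats the case where the target field is itself a function field $F(\sigma)$). You should state and invoke this fact -- or prove that $\varphi_{F(\psi)}$ isotropic and $\psi_L$ isotropic imply $\varphi_L$ isotropic for arbitrary $L/F$ -- to close the argument; as written, the degenerate case is not established.
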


\begin{remark}
One could hope that, given anisotropic quadratic forms $\varphi$, $\psi$ and a bilinear Pfister form $\pi$ over $F$ such that $(\pi\otimes\varphi)_{F(\pi\otimes\psi)}$ is isotropic, then $\varphi_{F(\psi)}$ must be isotropic. But that is not true in general: Let $a,b\in F$ be $2$-independent (i.e., the set $\{1, a, b, ab\}$ is linearly independent over $F^2$), and set $\varphi\cong\sqf{1,a}$, $\psi\cong\sqf{1,a,b}$, and $\pi\cong\pf{a}_{\b}$. Then $\varphi_{F(\psi)}$ is anisotropic by \cite[Theorem~1.1]{HL06}, while $\pi\otimes\varphi$ is obviously isotropic over $F$ already.

Another example shows that the claim cannot hold even under the assumption that $\pi\otimes\varphi$ is anisotropic over $F$: Let $a,b$ be as above, and set $\varphi'\cong\sqf{1,a}$, $\psi'\cong\sqf{1,ab}$ and $\pi'\cong\pf{b}_{\b}$. Then $\pi'\otimes\varphi'\cong\sqf{1,a,b,ab}\cong\pi'\otimes\psi'$, so $(\pi'\otimes\varphi')_{F(\pi'\otimes\psi')}$ is obviously isotropic. On the other hand, if $\varphi'_{F(\psi')}$ were isotropic, then, by Proposition~\ref{Prop:IsotropyOverQuadrExt}, it must hold $\varphi'\simsim\psi'$,  which is not true. 
\end{remark}

As the final result, we combine Corollary~\ref{Cor:CharSTB}, Lemma~\ref{Lemma:XXXPFmultiples} and Corollary~\ref{Cor:CharIsoThroughPF}.

\begin{corollary}\label{Cor:CharSTB-PF}
Let $\varphi$, $\psi$ be nondefective quadratic forms over $F$ of dimension at least two, and let
\[\mE=\{E~|~E/F \text{ an extension s.t. } \iql{\varphi_E}=\iql{\psi_E}=0\}.\] 
Then the following are equivalent:
\begin{enumerate}
	\item $\varphi\simstb\psi$,
	\item $D_E(\psi)^2= D_E(\varphi)^2$ for each $E\in\mE$,
	\item for every $n\geq1$, $\pf{X_1,\dots,X_n}_{\b}\otimes \varphi\simstb \pf{X_1,\dots,X_n}_{\b}\otimes \psi$ over $F(X_1,\dots,X_n)$,
	\item for any field extension $E/F$, any $n\geq 0$ and any $n$-fold bilinear Pfister form $\pi$ over $E$, we have $(\pi\otimes\varphi)_E\simstb(\pi\otimes\psi)_E$.
\end{enumerate}
\end{corollary}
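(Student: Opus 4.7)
The plan is to derive each of the three equivalences (i) $\Leftrightarrow$ (ii), (i) $\Leftrightarrow$ (iii), (i) $\Leftrightarrow$ (iv) from a single previously-established result, applied symmetrically to $\varphi$ and $\psi$. By definition, $\varphi\simstb\psi$ is the conjunction of $\varphi_{F(\psi)}$ and $\psi_{F(\varphi)}$ being isotropic, so condition (i) is already a two-sided hypothesis that will feed into the one-sided cited results twice.

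For (i) $\Leftrightarrow$ (ii), I would invoke Corollary~\ref{Cor:CharSTB}. First observe that $D_E(\chi)^2=D_E^*(\chi)^2\cup\{0\}$ for any form $\chi$, so condition (ii) here coincides with condition (b) of Corollary~\ref{Cor:CharSTB}, except that our $\mE$ is contained in the $\mE$ appearing there. The implication (i) $\Rightarrow$ (ii) is then immediate. For the converse, I would specialise (ii) to $E=F(\bY)$ with $\bY$ of length $n\geq\max\{\dim\varphi,\dim\psi\}$: since $F(\bY)/F$ is purely transcendental, both $\iql{\varphi_{F(\bY)}}$ and $\iql{\psi_{F(\bY)}}$ vanish, so $F(\bY)\in\mE$, and (ii) reproduces condition (d) of Corollary~\ref{Cor:CharSTB}, whence condition (a), namely (i).

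For (i) $\Leftrightarrow$ (iii), I would apply the equivalence (i) $\Leftrightarrow$ (v) of Lemma~\ref{Lemma:XXXPFmultiples} twice, once to the ordered pair $(\varphi,\psi)$ and once to the swapped pair $(\psi,\varphi)$; conjoining the two outputs yields (iii) from (i). Conversely, specialising (iii) to $n=1$ produces the isotropy of $(\varphi\ort X_1\varphi)_{F(X_1)(\psi\ort X_1\psi)}$ together with its swap, each of which is condition (iii) of Lemma~\ref{Lemma:XXXPFmultiples} for the respective pair, so the equivalence (iii) $\Leftrightarrow$ (i) of that lemma recovers the isotropy of $\varphi_{F(\psi)}$ and $\psi_{F(\varphi)}$, i.e.\ condition (i). The equivalence (i) $\Leftrightarrow$ (iv) proceeds by the same mechanism: the forward direction applies Corollary~\ref{Cor:CharIsoThroughPF} symmetrically to the two one-sided hypotheses, and the reverse direction simply specialises (iv) to $E=F$, $n=0$, $\pi\cong\sqf{1}$.

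No new technical input is required; the only mildly delicate point is the $\mE$-reconciliation in (i) $\Leftrightarrow$ (ii), handled by the purely transcendental specialisation above. Everything else is a symmetric repackaging of Corollary~\ref{Cor:CharSTB}, Lemma~\ref{Lemma:XXXPFmultiples} and Corollary~\ref{Cor:CharIsoThroughPF}, so the anticipated difficulty is essentially bookkeeping rather than a substantive new step.
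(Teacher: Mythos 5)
Your argument is correct and uses exactly the three ingredients the paper indicates (Corollary~\ref{Cor:CharSTB}, Lemma~\ref{Lemma:XXXPFmultiples}, Corollary~\ref{Cor:CharIsoThroughPF}), with the two-sided statements obtained by applying each one-sided result to both ordered pairs $(\varphi,\psi)$ and $(\psi,\varphi)$. The only very minor inefficiency is routing (i)~$\Leftrightarrow$~(ii) through Corollary~\ref{Cor:CharSTB}, which forces the $\mE$-reconciliation and the $D_E$ versus $D_E^*$ conversion, whereas the equivalence (i)~$\Leftrightarrow$~(ii) of Lemma~\ref{Lemma:XXXPFmultiples} (applied twice) has the matching $\mE$ and $D_E$ already and yields the claim directly.
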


\newpage
\printbibliography

\end{document}